\newtheorem{thm}{Theorem}[section]
\newtheorem{defn}[thm]{Definition}
\newtheorem{cor}[thm]{Corollary}
\newtheorem{lem}[thm]{Lemma}
\newtheorem{prop}[thm]{Proposition}
\newtheorem{rem}[thm]{Remark}
 \numberwithin{equation}{section}
 \DeclareMathOperator{\sgn}{sgn}
\begin{document}

\title{$(E,F)$-multipliers and applications}
\author{F.~Sukochev$^{*}$}
\address{F.S.: School of Mathematics and Statistics, University of New South Wales, Sydney NSW 2052, Australia}
\email{f.sukochev@unsw.edu.au}
\author{A.~Tomskova$^{*}$}
\address{A.T.: Institute of Mathematics and IT, Durmon yuli 29, Tashkent,
Uzbekistan}
\email{tomskovaanna@gmail.com}
\thanks{$^{*}$ Research was partially supported by the ARC}
\date{}

\begin{abstract} For two given symmetric sequence spaces $E$ and
$F$ we study the $(E,F)$-multiplier space, that is the space all
of matrices $M$ for which the Schur product $M\ast A$ maps $E$
into $F$ boundedly whenever $A$ does. We obtain several results
asserting continuous embedding of  $(E,F)$-multiplier space into
the classical $(p,q)$-multiplier space (that is when $E=l_p$,
$F=l_q$). Furthermore, we present many examples of symmetric
sequence spaces $E$ and $F$ whose projective and injective tensor
products are not isomorphic to any subspace of a Banach space with
an unconditional basis, extending classical results of S.
Kwapie\'{n} and A. Pe{\l}czy\'{n}ski \cite{Pil} and of G. Bennett
\cite{Ben} for the case when $E=l_p$, $F=l_q$.
\end{abstract}
\maketitle
\section{Introduction}
  For an infinite scalar-valued (real or complex) matrix
$A=(a_{ij})_{i,j=1}^\infty$ and $n\in \mathbb N$ we set
$$T_n(A):=(t^{(n)}_{ij})_{i,j=1}^\infty,\quad\text {where}\quad t^{(n)}_{ij}=
a_{ij},\quad\text{ for} \quad 1\leq j\leq i\leq n$$ and
 $t^{(n)}_{ij}=0$ otherwise. The operator $T_n$ is called the
$n$-th main triangle projection. S. Kwapie\'{n} and A.
Pe{\l}czy\'{n}ski
 studied in \cite{Pil} the norms of the operators $(T_n)_{n\geq 1}$
 acting on the space of all bounded linear operators $B(l_p,l_q)$ and obtained that
 $\sup\limits_n\|T_n\|_{B(l_p,l_q)\to B(l_p,l_q)}=\infty$  for
$1\leq q\leq
 p\leq\infty$, $q\neq\infty$, $p\neq 1$. Moreover, as an application, they have established that
for $1<p\leq\infty$, $1<q\leq\infty$ and
$\frac{1}{p}+\frac{1}{q}\leq 1$ (respectively, $1\leq p<\infty$,
$1\leq q<\infty$ and $\frac{1}{p}+\frac{1}{q}\geq 1$)
  projective (respectively, injective) tensor product of the spaces $l_p$ and $l_q$ is not isomorphic to any
subspace of a Banach space with an unconditional basis. In the
same paper \cite{Pil} the question (Problem 1) whether the
sequence $(\|T_n\|_{B(l_p,l_q)\to B(l_p,l_q)})_{n\geq 1}$ is
bounded for $1< p<
 q<\infty$ was stated. The positive answer to that question was obtained by
G. Bennett in his article \cite{Ben1}, where he established that
the main triangle projection $T$ defined on an element
$A=(a_{ij})_{i,j=1}^\infty\in B(l_p,l_q)$ by
$T(A):=(t_{ij})_{i,j=1}^\infty$ where $t_{ij}= a_{ij}$ for $1\leq
j\leq i$ and  $t_{ij}=0$ otherwise is bounded for $1< p<
 q<\infty$. G. Bennett obtained his result on the operator $T$ in the
 framework of the general theory of Schur multipliers on
 $B(l_p,l_q)$ (briefly, $(p,q)$-multipliers). For a deep study and applications of this notion in
 analysis and operator theory we refer to \cite{Ben1, Ben, Pis}.

The classical Banach spaces $l_p$, $(1\leq p\leq\infty)$ is an
 important representative of the class of symmetric sequence
 spaces (see e.g. \cite{Lin1}). The present paper extends results from
 \cite{Pil} and \cite{Ben} to a wider class of symmetric sequence spaces satisfying
 certain convexity conditions. In particular, we present sufficient conditions in terms of $p$-convexity and $q$-concavity of
symmetric sequence spaces guaranteeing that their projective and injective tensor
products are not isomorphic to any subspace of a Banach space with an
unconditional basis (Section 6, Theorem \ref{th9}). Our methods
are based on the study of general Schur multipliers on the space
 $B(E,F)$ (briefly $(E,F)$-multipliers) extending and generalizing
 several results from \cite{Ben}. In particular, we establish a number of
 results concerning the embedding of an  $(E,F)$-multiplier space into a
$(p,q)$-multiplier space and their coincidence (Section 4, Theorems \ref{th5} and
\ref{th6}).

An important technical tool used in this paper  is the theory of
generalized K\"{o}the duality (Section 3), which (to our best
knowledge) was firstly introduced by Hoffman \cite{Hoff} and
presented in a detailed manner in \cite{MP} (see also recent
papers \cite{CDS} and \cite{DS}).

In the final section (Section 6), we present an extension of
Kwapie\'{n} and Pe{\l}czy\'{n}ski  results for $l_p$-spaces to a wide class of Orlicz-Lorentz sequence spaces (Theorem \ref{th3}).\\

\textbf{Acknowledgements.} The authors are grateful to V. Chilin
and D. Potapov for their comments on earlier versions of this
paper. We also thank  A. Kaminska for her interest and additional references to the text.

\section{\textbf{Preliminaries and notation}}
Let $c_{0}$ be a linear space of all converging to zero real
sequences. For every $x\in c_0$,  by $|x|$ we denote the sequence
$(|x_i|)_{i=1}^\infty$ and by $x^*$ the non-increasing rearrangement of $|x|$, that is $x^*=(x_i^*)_{i=1}^\infty\in c_0$, where
$$
x_i^*=|x_{n_i}| \quad (i=1,2,...),
$$
where $(n_i)_{i=1}^\infty$ is a such permutation of natural
numbers, that the sequence $(|x_{n_i}|)$ is non-increasing.

In this paper, we work with symmetric sequence spaces which are a ~\lq close relative\rq~ of the classical $l_p$-spaces, $1\leq p\leq \infty$ (see \cite{Lin1, Lin2}).

Recall that a linear space $E\subset c_{0}$ equipped with a Banach norm
$\|\cdot\|$ is said to be a \textit{symmetric sequence space}, if
the following conditions hold:

(i) if $x,y\in E$ and $|x|\leq|y|$ , then $\|x\|\leq \|y\|$;

(ii) if $x\in E$, then $x^*\in E$ and $\|x^*\|= \|x\|$.

Without loss of generality we shall assume that
$\|(1,0,0,...)\|=1$.

A symmetric sequence space $E$ is said to be {\it
$p$--convex} ($1 \leq p \le\infty$), respectively, {\it
$q$--concave} ($1 \leq q \le \infty$) if
$$
\|(\sum_{k=1}^n |x_k|^p)^{1/p}\| \leq C (\sum_{k=1}^n
\|x_k\|_E^p)^{1/p},
$$
respectively,
$$
(\sum_{k=1}^n\|x_k\|_E^q)^{1/q} \leq C \| (\sum_{k=1}^n
|x_k|^q)^{1/q}\|
$$
(with a natural modification in the case $p=\infty$ or $q=\infty$)
for some constant $C>0$ and every choice of vectors $x_{1}, x_{2},
\dots, x_{n}$ in $X$. The least such constant is denoted by
$M^{(p)}(E)$ (respectively, $M_{(q)}(E)$)(see e.g. \cite{Lin2}).

\begin{rem}\label{r4} Any symmetric sequence space is 1-convex and
$\infty$-concave with constants equal to 1.
\end{rem}

The following proposition links $p$-convex and $q$-concave
sequence spaces to classical  $l_p$-spaces.

\begin{prop}\label{p154}\cite[p. 132]{Lin2}
If a symmetric sequence space $E$ is $p$-convex and $q$-concave,
then \begin{equation}\label{emb}l_p\subset E\subset
l_q\end{equation} and
\begin{equation}\label{emb}\|\cdot\|_q/M_{(q)}(E)\leq
\|\cdot\|\leq M^{(p)}(E)\|\cdot\|_p.\end{equation}
\end{prop}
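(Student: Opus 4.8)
The plan is to test the $p$-convexity and $q$-concavity inequalities on the simplest vectors available, namely scalar multiples $x_ke_k$ of the standard unit vectors $e_k=(0,\dots,0,1,0,\dots)$, and then to pass to a limit. Two preliminary observations make everything run smoothly. First, by axiom (ii) and the normalization $\|e_1\|=1$ one has $\|e_k\|=1$ for every $k$. Second, for any $z\in E$ the vector $\|z\|_\infty e_1$ is dominated coordinatewise by $z^*$, so axioms (i)--(ii) give $\|z\|_\infty\le\|z\|$; thus the inclusion $E\hookrightarrow c_0$ is continuous, and in particular norm convergence in $E$ implies coordinatewise convergence.

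For the embedding $l_p\subset E$, fix $x\in l_p$ and, for $n<m$, apply the $p$-convexity inequality to the vectors $x_{n+1}e_{n+1},\dots,x_me_m$. Since $(\sum_{k=n+1}^m|x_ke_k|^p)^{1/p}$ is precisely the block $(0,\dots,0,x_{n+1},\dots,x_m,0,\dots)$ and $\|x_ke_k\|=|x_k|$, this inequality becomes
\[
\Big\|\sum_{k=n+1}^{m}x_ke_k\Big\|\le M^{(p)}(E)\Big(\sum_{k=n+1}^{m}|x_k|^p\Big)^{1/p},
\]
and the right-hand side tends to $0$ as $n,m\to\infty$ because $x\in l_p$. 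Hence the truncations $x^{(n)}:=\sum_{k\le n}x_ke_k$ form a Cauchy sequence in the Banach space $E$; by the continuity of $E\hookrightarrow c_0$ its $E$-limit coincides with the coordinatewise limit $x$, so $x\in E$. Putting $n=0$ in the displayed inequality gives $\|x^{(m)}\|\le M^{(p)}(E)\|x\|_p$, and letting $m\to\infty$ yields $\|x\|\le M^{(p)}(E)\|x\|_p$.

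For the embedding $E\subset l_q$, fix $x\in E$ and apply the $q$-concavity inequality to $x_1e_1,\dots,x_ne_n$. Its left-hand side equals $(\sum_{k\le n}|x_k|^q)^{1/q}$, while on the right-hand side $(\sum_{k\le n}|x_ke_k|^q)^{1/q}=x^{(n)}$ satisfies $|x^{(n)}|\le|x|$, hence $\|x^{(n)}\|\le\|x\|$ by axiom (i). Therefore $(\sum_{k\le n}|x_k|^q)^{1/q}\le M_{(q)}(E)\|x\|$ for every $n$, and letting $n\to\infty$ gives $x\in l_q$ together with $\|x\|_q\le M_{(q)}(E)\|x\|$.

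The only genuine obstacle is the first embedding: the uniform bound on the $E$-norms of the truncations of $x$ does not by itself place $x$ in $E$, and it is the second use of $p$-convexity (applied to the blocks $x^{(m)}-x^{(n)}$), together with the continuity of $E\hookrightarrow c_0$, that removes this difficulty. The cases $p=\infty$ or $q=\infty$ follow the same pattern, using the natural $\sup$-modification of the convexity and concavity inequalities and reading $l_\infty$ as $c_0$ (consistently with $E\subset c_0$).
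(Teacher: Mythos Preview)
Your argument is correct. The paper itself gives no proof of this proposition; it is simply quoted from \cite[p.~132]{Lin2}, so there is nothing in the text to compare against. The route you take --- testing the $p$-convexity and $q$-concavity inequalities on the disjointly supported vectors $x_ke_k$ and passing to the limit --- is precisely the standard argument found in Lindenstrauss--Tzafriri. Your observation that the Cauchy estimate on the tail blocks $x^{(m)}-x^{(n)}$ (rather than a mere uniform bound on $\|x^{(m)}\|$) is what actually secures $x\in E$ is well taken.

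One small comment on your closing parenthetical: the endpoint $p=\infty$ is a little delicate in the paper's framework, since here $E\subset c_0$ by definition, so the literal inclusion $l_\infty\subset E$ cannot hold; the honest statement under $\infty$-convexity would be $c_0\subset E$, and this does not drop out of the ``$\sup$-modification'' quite as automatically as you indicate. The case $q=\infty$, on the other hand, is trivial, being already contained in your preliminary inequality $\|\cdot\|_\infty\le\|\cdot\|_E$. None of this affects anything downstream, as the proposition is only invoked for finite exponents.
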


Without loss of generality we shall assume that the embedding
constants in \eqref{emb} are both equal to 1 \cite[Proposition 1.d.8]{Lin2}.


Below, we restate the result given in \cite[Proposition
1.d.4 (iii)]{Lin2} for the case of symmetric
sequence spaces.(Here, by $E^*$ we denote the Banach dual of
$E$.)

\begin{prop}\label{p2}
Let $1\leq p,q \leq\infty$ be such that
$\frac{1}{p}+\frac{1}{q}=1$. A separable symmetric space $E$ is
$p$-convex (concave) if and only if the space $E^*$ is $q$-concave
(convex).
\end{prop}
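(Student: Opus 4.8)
The plan is to separate the ``symmetric sequence space'' bookkeeping from the analytic content, which is nothing but the duality between $r$-convexity and $r'$-concavity for arbitrary Banach lattices. First, since $E$ is separable its norm is order continuous (equivalently, $c_{00}$ is dense in $E$), so the Banach dual $E^{*}$ is realised as the K\"othe dual $E^{\times}=\{y\in c_{0}:\sum_{i}|x_{i}y_{i}|<\infty\text{ for every }x\in E\}$ with the associate norm; in particular $E^{*}$ is again a symmetric sequence space, which is all that separability is needed for, the remaining argument being valid for any Banach lattice. Second, the canonical embedding $J\colon E\to E^{**}$ is an isometric lattice homomorphism with norm-closed range (range closed because $E$ is complete), so $E$ may be regarded as a closed sublattice of $E^{**}$.

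For the two ``only if'' implications I would simply invoke the classical duality (see \cite{Lin2}): if a Banach lattice $X$ is $p$-convex then $X^{*}$ is $q$-concave with $M_{(q)}(X^{*})\le M^{(p)}(X)$, and if $X$ is $p$-concave then $X^{*}$ is $q$-convex with $M^{(q)}(X^{*})\le M_{(p)}(X)$, where $\tfrac1p+\tfrac1q=1$; taking $X=E$ gives both. For the converse implications I would apply the same facts to the Banach lattice $E^{*}$, whose conjugate exponent is $p$: if $E^{*}$ is $q$-concave then $E^{**}$ is $p$-convex, and if $E^{*}$ is $q$-convex then $E^{**}$ is $p$-concave. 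It then remains to push these properties from $E^{**}$ down to its closed sublattice $E$, which gives $E$ $p$-convex (resp.\ $p$-concave) with the corresponding constant no larger than that of $E^{**}$. The endpoints $p\in\{1,\infty\}$ are handled trivially by Remark~\ref{r4}.

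The one genuinely delicate point is this last transfer from $E^{**}$ to $E$. For $p\notin\{1,\infty\}$ the expression $\big(\sum_{k=1}^{n}|x_{k}|^{p}\big)^{1/p}$ occurring in the definitions is not a finite lattice combination of the $x_{k}$, so one must first recall that it is given by the homogeneous (Krivine) functional calculus, and then use that a norm-closed sublattice of a Banach lattice is stable under this calculus. Granting that, for $x_{1},\dots,x_{n}\in E$ the vector $\big(\sum_{k}|x_{k}|^{p}\big)^{1/p}$ and its norm are the same whether computed in $E$ or in $E^{**}$, whence every convexity/concavity inequality valid in $E^{**}$ specialises verbatim, with the same constant, to vectors of $E$. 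With this in place the proof is complete.
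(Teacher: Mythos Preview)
The paper does not supply its own proof of this proposition; it simply records it as a restatement of \cite[Proposition~1.d.4(iii)]{Lin2} in the setting of symmetric sequence spaces. Your argument is correct and is precisely the standard Banach-lattice proof underlying that reference: the one-sided duality $X$ $p$-convex $\Rightarrow$ $X^{*}$ $q$-concave (and the concave/convex companion), combined with the canonical lattice embedding $E\hookrightarrow E^{**}$ and the stability of the Krivine functional calculus under passage to closed sublattices, which lets you pull the convexity/concavity of $E^{**}$ back to $E$.
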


\begin{rem}\label{r6} Let $E$ be $q$-concave.
If $E$ is not separable, then $E$ does not have order-continuous
norm. It follows from \cite[Chapter 10, \S4]{KA} that there exists
a pairwise disjoint sequence $(z_n)_n\subset E$ such that $z_n\geq
0$ and $\|z_n\|_E=1$, $n=1, 2 ,...$ and $x =
\sum\limits_{n=1}^\infty z_n\in E$, and this contradicts to
$q$-concavity of $E$. So if $E$ is $q$-concave, then $E$ is
separable and then $E^*$ is a symmetric sequence space.
\end{rem}

Given a symmetric sequence space $E$ and $0<p<\infty$ we denote
$$E^p:=\{x\in c_0: |x|^p\in E\},\quad
\|x\|_{E^p}:=(\|\ |x|^p\|_{E})^{1/p}.$$ The space $(E^p,
\|x\|_{E^p})$ is called the p-convexification of $E$ if $p>1$ and
the p-concavification of $E$ if $p<1$ (see e.g. \cite[Chapter 1.d]{Lin2}).

\begin{rem}\label{r2}
(i) If $1\leq p< \infty$, then $(E^p, \|\cdot\|_{E^p})$ is a
Banach space (for the proof see \cite[Proposition 1]{MP}). It is
also clear from the definition that $(E^p, \|\cdot\|_{E^p})$ is a
symmetric sequence space.

(ii) Generally speaking, the space $((E)^{1/p},
\|\cdot\|_{E^{1/p}})$ is not a Banach space, but if $E$ is
$p$-convex, then $(E)^{1/p}$ is a Banach space and so it is a
symmetric sequence space (for details see \cite[Chapter 1, p.
54]{Lin2}). Furthermore, this is not difficult to check that if
$E$ is $p$-convex, then the space $((E)^{1/p})^p$ is isometrically
isomorphic to $E$.
\end{rem}

\section{Generalized K\"{o}the duality}

For a symmetric sequence space $E$ by $E^\times$ we denote its
\textit{{K\"{o}the dual}}, that is
$$
E^\times:=\{y\in l_\infty:\sum\limits_{n=1}^\infty|x_ny_n|<\infty
\;\text{for every} \; x\in E\},
$$
and for $y\in E^\times$ we set
$$
\|y\|_{E^\times}:=\sup\{\sum\limits_{n=1}^\infty|x_ny_n|:\|x\|_{E}\leq
1\}.
$$
The space ($E^\times, \|\cdot\|_{E^\times}$) is a symmetric
sequence space (see \cite[Chapter II, \S 3]{KPS}).

We say that $\|\cdot\|_{E}$ is a Fatou-norm if given $x\in E$ and
a sequence $0\leq x_n\in E$ such that $x_n\:\uparrow\:x$, it
follows that $\|x_n\|_E\:\uparrow\:\|x\|_E$. It is known (see
\cite[Part I, Chapter X, \S 4, Theorem 7]{KA}) that
$\|\cdot\|_{E}$ is a Fatou-norm if and only if
$\|x\|_{E}=\|x\|_{E^{\times\times}}$ for every $x\in E$, where
$(E^{\times})^\times=E^{\times\times}$.

For a pair of sequences $x=(x_n)_{n=1}^\infty,
y=(y_n)_{n=1}^\infty\in l_\infty$ by $x\cdot y$ we denote the
sequence $(x_ny_n)_{n=1}^\infty$.

For any two symmetric sequence spaces $(E,\|\cdot\|_E)$ and
$(F,\|\cdot\|_F)$, we  set
\begin{equation}\label{e1}E^{F}:=\{x\in c_0 :
x\cdot y\in F, \;\text{for every}\; y\in E\}
\end{equation} and for $x\in
E^{F}$
\begin{equation}\label{e2}\|x\|_{E^{F}}:=\sup\limits_{\|y\|_E\leq
1}\|x\cdot y\|_F.\end{equation}

\begin{rem}
Another suggestive notation for the space $E^{F}$ introduced above
would be $F:E$  (see e.g. \cite{Hoff}). We use the notation
$E^{F}$ since it is in line with the notations from \cite{MP},
\cite{CDS} which are widely used in this section.
\end{rem}

The fact that the supremum in  \eqref{e2} above is finite for
every $x\in E^{F}$ is explained below.

\begin{rem}\label{r1} Any element $x\in E^{F}$ can be consider as
a linear bounded operator from $E$ into $F$ and $x\in E^{F}$  if
and only if for every $y\in F$ the following inequality holds
\begin{equation}\label{e6}\|x\cdot y\|_F\leq
\|x\|_{E^{F}}\|y\|_E.\end{equation} So the fact that the supremum
in Equation \eqref{e2} is finite can be proved via the closed
graph theorem considering the operator $x(y)=x\cdot y$ for every
$y\in E$.
\end{rem}

The proof of the following proposition is routine and is therefore
omitted.

\begin{prop}\cite[Theorem 4.4]{BCLS} $(E^{F},\|\cdot\|_{E^{F}})$ is a symmetric
sequence space.
\end{prop}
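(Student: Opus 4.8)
The plan is to verify directly that $(E^F,\|\cdot\|_{E^F})$ satisfies all the defining properties of a symmetric sequence space, namely that it is a linear subspace of $c_0$, that $\|\cdot\|_{E^F}$ is a complete norm, and that it fulfils the two monotonicity/rearrangement axioms (i) and (ii) from Section 2. First I would check that $E^F$ is a linear subspace of $c_0$: it is clearly closed under scalar multiplication, and closure under addition follows from the pointwise inequality $|(x+x')\cdot y| \le |x\cdot y| + |x'\cdot y|$ together with axiom (i) for $F$; the fact that $\|\cdot\|_{E^F}$ is subadditive and homogeneous follows in the same way from taking suprema over $\|y\|_E \le 1$. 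Positive-definiteness uses the normalisation $\|(1,0,0,\dots)\|_E = 1$: testing against $y = e_k$ shows $\|x\cdot e_k\|_F = |x_k|\,\|e_k\|_F \le \|x\|_{E^F}$, so $\|x\|_{E^F}=0$ forces $x=0$; the same computation, combined with $\|e_k\|_F \to 0$ (which holds since $F \subset c_0$ is separable-like — more carefully, $\|e_k\|_F$ is bounded and $x \in c_0$), will be used to confirm $E^F \subset c_0$.

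Next I would verify the two symmetric-space axioms. For (i), if $|x| \le |x'|$ with $x' \in E^F$, then for every $y$ with $\|y\|_E \le 1$ we have $|x\cdot y| \le |x'\cdot y|$ pointwise, so $\|x\cdot y\|_F \le \|x'\cdot y\|_F \le \|x'\|_{E^F}$ by axiom (i) for $F$; taking the supremum gives $x \in E^F$ and $\|x\|_{E^F} \le \|x'\|_{E^F}$. For (ii), the key observation is that for a fixed permutation $\pi$ of $\mathbb{N}$, the map $y \mapsto y\circ\pi$ is an isometry of $E$ (by axiom (ii) for $E$, applied to arbitrary rearrangements, or more precisely using that symmetric spaces are invariant under all permutations) and similarly for $F$; hence $\|(x\circ\pi)\cdot y\|_F = \|x \cdot (y\circ\pi^{-1})\|_F \circ \pi$ up to a permutation, and suprema match, giving $\|x\circ\pi\|_{E^F} = \|x\|_{E^F}$. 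Applying this to the permutation realising the decreasing rearrangement shows $x^* \in E^F$ with $\|x^*\|_{E^F} = \|x\|_{E^F}$ (one should be slightly careful when $x$ has infinitely many nonzero entries and the rearrangement is induced by an injection rather than a bijection, but the standard density/truncation argument handles this).

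The one genuinely non-routine point is completeness of $\|\cdot\|_{E^F}$. The cleanest route is to invoke Remark~\ref{r1}: every $x \in E^F$ acts as a bounded operator $M_x: y \mapsto x\cdot y$ from $E$ to $F$, and $\|x\|_{E^F} = \|M_x\|_{B(E,F)}$ by \eqref{e6}. Since $B(E,F)$ is a Banach space and $x \mapsto M_x$ is a linear isometry onto a subspace of $B(E,F)$, it suffices to show this subspace is closed: if $M_{x^{(n)}} \to T$ in $B(E,F)$, then in particular $x^{(n)}_k \|e_k\|_F = \|M_{x^{(n)}} e_k\|_F \to \|T e_k\|_F$ is Cauchy for each $k$, so $x^{(n)} \to x$ coordinatewise for some scalar sequence $x$, whence $Te_k = x_k e_k$, and a routine argument (using that finitely supported sequences are dense enough, or that $\|M_{x^{(n)}} - T\| \to 0$ controls $\|(x^{(n)} - x)\cdot y\|_F$ uniformly on the unit ball) shows $T = M_x$ with $x \in E^F$. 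Thus the image is closed, hence complete, hence $E^F$ is a Banach space. I expect this completeness step — specifically identifying the limit operator as multiplication by a $c_0$-sequence and verifying it lands back in $E^F$ — to be the main obstacle, though it is more bookkeeping than深層 difficulty; everything else is a direct transcription of the axioms.
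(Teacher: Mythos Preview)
The paper does not actually prove this proposition: it cites \cite[Theorem 4.4]{BCLS} and states that ``the proof \dots\ is routine and is therefore omitted.'' Your plan is exactly the routine direct verification one would expect, and it is sound.

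Two small corrections. First, you do not need to argue that $E^F\subset c_0$: this is built into the definition \eqref{e1}, which already quantifies over $x\in c_0$. In particular the aside about $\|e_k\|_F\to 0$ is both unnecessary and false (by the normalisation $\|e_k\|_F=1$ for every $k$). Second, in the completeness step you should make explicit why the coordinatewise limit $x$ of the Cauchy sequence $(x^{(n)})$ lies in $c_0$: from $|x^{(n)}_k-x^{(m)}_k|\le\|x^{(n)}-x^{(m)}\|_{E^F}$ (your positive-definiteness computation) the convergence $x^{(n)}\to x$ is uniform in $k$, and a uniform limit of $c_0$-sequences is in $c_0$. Once that is noted, the identification $T=M_x$ follows as you indicate, since $F$-norm convergence implies coordinatewise convergence and $x^{(n)}\cdot y\to x\cdot y$ coordinatewise for each $y\in E$. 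The rearrangement-invariance step (ii) is handled correctly by your permutation argument; your caveat about injections versus bijections is the right one, and the standard truncation fix works.
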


Analyzing definitions of the K\"{o}the dual and generalized K\"{o}the
dual spaces, it is not difficult to see that the spaces $E^{l_1}$
and $E^\times$ coincide (see also \cite{MP}).

In the following proposition we collect a number of known results
from \cite{MP}.

\begin{prop}\label{p11}

(i)\cite[p. 326, item (f)]{MP}  $l_\infty^{E}=E$;

(ii) \cite[Proposition 3]{MP}  if $1\leq r\leq p\leq\infty$ and
$\frac{1}{p}+\frac{1}{q}=\frac{1}{r}$, then $l_p^{l_r}=l_q$;

(iii) \cite[Theorem 2]{MP} if $1\leq p< r\leq\infty$, then
$l_p^{l_r}=l_\infty$.

\end{prop}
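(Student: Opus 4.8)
The plan is to prove each of (i)--(iii) by establishing the claimed identity of sets together with the fact that the generalized K\"othe norm \eqref{e2} coincides isometrically with the ambient norm on the right-hand side. In every case the ``easy'' inclusion comes from H\"older's inequality, and the reverse inequality from a well-chosen family of extremal test sequences $y$ in the supremum \eqref{e2}. I would first record that some boundary exponents reduce to one another: with $p=\infty$ in (ii) one has $q=r$ and (ii) becomes the case $E=l_r$ of (i), while $r=p$ in (ii) (so $q=\infty$) falls under the same argument as (iii). Consistently with the convention that symmetric sequence spaces sit inside $c_0$, I read $l_\infty$ on the right-hand sides of (i)--(iii) as $c_0$ equipped with the supremum norm.

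For (i) the argument is short: if $x\in l_\infty^{E}$, testing against the constant sequence $\mathbf 1=(1,1,\dots)\in l_\infty$ gives $x=x\cdot\mathbf 1\in E$ and $\|x\|_E\le\|x\|_{l_\infty^{E}}$; conversely, for $x\in E$ and $y\in l_\infty$ the pointwise bound $|x\cdot y|\le\|y\|_\infty|x|$ together with condition (i) in the definition of a symmetric space gives $x\cdot y\in E$ with $\|x\cdot y\|_E\le\|y\|_\infty\|x\|_E$, so $\|x\|_{l_\infty^{E}}\le\|x\|_E$.

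For (ii), with $r\le p<\infty$ so that $\tfrac rp+\tfrac rq=1$, the inclusion $l_q\subset l_p^{l_r}$ with $\|x\|_{l_p^{l_r}}\le\|x\|_q$ follows from H\"older's inequality with exponents $p/r$ and $q/r$ applied to $\sum_n|x_n|^r|y_n|^r$. For the reverse I would fix $x\in l_p^{l_r}$ and $N\in\mathbb N$, and plug into \eqref{e2} the sequence supported on $\{1,\dots,N\}$ given by $y_n=|x_n|^{q/p}\big/\big(\sum_{k\le N}|x_k|^q\big)^{1/p}$, which saturates that H\"older estimate; the point is to check $\|y\|_p=1$ and $\|x\cdot y\|_r=\big(\sum_{n\le N}|x_n|^q\big)^{1/q}$, and then let $N\to\infty$ to deduce $x\in l_q$ with $\|x\|_q\le\|x\|_{l_p^{l_r}}$. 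The case $p=\infty$ reduces to (i) as noted above.

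Finally, for (iii) (and the leftover case $r=p$ of (ii)) I would use that $p\le r$ gives $\|y\|_r\le\|y\|_p$, so $\|x\cdot y\|_r\le\|x\|_\infty\|y\|_p$ for all $x\in c_0$ and $y\in l_p$, yielding $c_0\subset l_p^{l_r}$ with $\|x\|_{l_p^{l_r}}\le\|x\|_\infty$; testing against the unit vectors $e_k$ gives $|x_k|=\|x\cdot e_k\|_r\le\|x\|_{l_p^{l_r}}$ for every $k$, hence $\|x\|_\infty\le\|x\|_{l_p^{l_r}}$. Since $l_p^{l_r}\subset c_0$ by definition of \eqref{e1}, this finishes the proof. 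The only genuinely computational step is the verification in (ii) that the sequence $y_n\propto|x_n|^{q/p}$ is extremal, i.e. that $\|x\cdot y\|_r=\big(\sum_{n\le N}|x_n|^q\big)^{1/q}$; this rests on the identities $r(1+q/p)=q$ and $1-r/p=r/q$, both immediate from $\tfrac1r=\tfrac1p+\tfrac1q$. I do not expect any serious obstacle beyond careful handling of the boundary exponents $p,q,r\in\{1,\infty\}$.
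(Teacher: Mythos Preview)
The paper does not supply its own proof of this proposition; it simply records the three assertions with citations to \cite{MP}. Your direct argument is correct and is essentially the standard one: H\"older's inequality for the easy inclusions, and explicit extremizers---the constant sequence $\mathbf 1$ in (i), the truncated $l_p$-normalized power $y_n\propto|x_n|^{q/p}$ in (ii), and the unit vectors $e_k$ in (iii)---for the reverse inequalities, with the identities $r(1+q/p)=q$ and $1-r/p=r/q$ doing exactly the work you claim. Your reduction of the boundary exponents ($p=\infty$ in (ii) to (i), and $r=p$ absorbed into (iii)) is also clean.

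One minor point worth making explicit: your reading of $l_\infty$ as $c_0$ on the \emph{right-hand} sides is forced by the definition \eqref{e1}, which places $E^{F}$ inside $c_0$; but in (i) the $l_\infty$ on the \emph{left} must be taken as the genuine $l_\infty$ (consistently with the paper's own usage, e.g.\ Remark~\ref{r8}), and this is precisely what lets you test against $\mathbf 1$. You handle this correctly, but the asymmetry deserves a sentence.
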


It is known (see e.g. \cite[Theorem 2]{MP}) that in the general
setting of Banach function spaces the space $E^{F}$ can be
trivial, that is $E^{F}=\{0\}$. The following proposition shows
that this is not the case in the setting of symmetric sequence
spaces.

\begin{prop}\label{p3}
$E^{F}\supset F$.
\end{prop}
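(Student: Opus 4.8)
The plan is to show that every $x \in F$ belongs to $E^{F}$, i.e.\ that $x \cdot y \in F$ for all $y \in E$, together with a norm bound. The key observation is that by Proposition~\ref{p154} (after normalizing the embedding constants to $1$ as agreed), $p$-convexity and $q$-concavity of $E$ give $E \subset l_\infty$ with $\|\cdot\|_\infty \leq \|\cdot\|_E$; but in fact this already follows from the normalization $\|(1,0,0,\dots)\|=1$ together with axiom (i), so no convexity hypothesis is needed: for any $y \in E$ and any index $n$, $|y_n| \cdot (1,0,0,\dots) \leq |y|$ after a permutation, hence $|y_n| \leq \|y\|_E$, so $\|y\|_\infty \leq \|y\|_E$.

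First I would fix $x \in F$ and $y \in E$. Then $|x \cdot y| = |x| \cdot |y| \leq \|y\|_\infty \, |x| \leq \|y\|_E \, |x|$ pointwise. Since $F$ is a symmetric sequence space and $|x| \in F$, axiom (i) gives $x \cdot y \in F$ with $\|x \cdot y\|_F \leq \|y\|_E \, \|x\|_F$. This shows $x \in E^{F}$, so $F \subset E^{F}$ as sets. Taking the supremum over $\|y\|_E \leq 1$ in the displayed inequality yields $\|x\|_{E^{F}} \leq \|x\|_F$, so the inclusion is norm-decreasing (in fact a contraction).

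There is essentially no obstacle here: the whole argument rests on the elementary fact that the unit vector has norm one in any symmetric sequence space, which forces $\|\cdot\|_\infty \leq \|\cdot\|_E$, and then on the ideal (solidity) property of $F$. The only point requiring a word of care is to justify $\|y\|_\infty \leq \|y\|_E$ cleanly from the axioms; one can phrase it via the non-increasing rearrangement, noting $y_1^* = \|y\|_\infty$ and $y_1^* \cdot (1,0,0,\dots) \leq y^*$, so $\|y\|_\infty = y_1^* = \|y_1^*(1,0,0,\dots)\|_E \leq \|y^*\|_E = \|y\|_E$ using axioms (i) and (ii) and the normalization. Everything else is routine, and the proposition follows.
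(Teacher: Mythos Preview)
Your proof is correct and follows essentially the same idea as the paper: both rest on the contractive inclusion $E\subset l_\infty$ (equivalently $\|y\|_\infty\le\|y\|_E$) together with the solidity of $F$. The only cosmetic difference is that the paper phrases the argument as $E^{F}\supset l_\infty^{F}$ and then invokes Proposition~\ref{p11}(i) ($l_\infty^{F}=F$), whereas you unpack that identity directly.
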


\begin{proof}
Let $x\in l_\infty^{F}$ and $y\in E$, then obviously $y\in
l_\infty$ and from Remark \ref{r1} we have
$$
\|x\cdot y\|_F\leq \|x\|_{l_\infty^{F}}\|y\|_E.
$$
In particular $x\in E^{F}$ and
$\|x\|_{E^{F}}\leq\|x\|_{l_\infty^{F}}$. That is $E^{F}\supset
l_\infty^{F}$.

Since $F=l_\infty^{F}$ (see Proposition \ref{p11}, (i)), the claim
follows.
\end{proof}

The following proposition explains the connection between
generalized K\"{o}the duality and p-convexification.

\begin{prop}  \label{p6}\cite[Example 1]{MP}
$(E^p)^{l_p}=(E^\times)^p$.
\end{prop}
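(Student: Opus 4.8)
\textbf{Proof proposal for Proposition \ref{p6}.}

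The plan is to prove the set-theoretic equality $(E^p)^{l_p}=(E^\times)^p$ together with the equality of the norms, by unwinding the definitions of the three operations involved: $p$-convexification, generalized K\"othe duality with target $l_p$, and ordinary K\"othe duality. The basic bookkeeping identity is that for sequences $x,y\in c_0$ one has $|x\cdot y|^p = |x|^p\cdot |y|^p$ coordinatewise, so membership and norm statements about $x\cdot y$ in $l_p$ translate into membership and norm statements about $|x|^p\cdot |y|^p$ in $l_1$.

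First I would fix $x\in c_0$ and show the membership equivalence. By definition $x\in (E^p)^{l_p}$ means $x\cdot y\in l_p$ for every $y\in E^p$, i.e. $\sum_n |x_n y_n|^p<\infty$ for every $y$ with $|y|^p\in E$. Writing $u=|y|^p$, as $y$ ranges over $E^p$ the sequence $u$ ranges over all nonnegative elements of $E$, and $\sum_n |x_n y_n|^p = \sum_n |x_n|^p u_n$. Hence $x\in (E^p)^{l_p}$ iff $\sum_n |x_n|^p u_n<\infty$ for every nonnegative $u\in E$, which (since K\"othe duality only sees absolute values) is exactly the statement $|x|^p\in E^\times$, i.e. $x\in (E^\times)^p$. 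This gives the equality of the underlying sets.

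For the norms I would compute directly from \eqref{e2}. For $x\in (E^p)^{l_p}$,
\begin{align*}
\|x\|_{(E^p)^{l_p}}
&= \sup_{\|y\|_{E^p}\le 1}\|x\cdot y\|_p
= \sup_{\|y\|_{E^p}\le 1}\Big(\sum_n |x_n|^p|y_n|^p\Big)^{1/p}.
\end{align*}
Again substituting $u=|y|^p$, the constraint $\|y\|_{E^p}\le 1$ becomes $\||y|^p\|_E^{1/p}\le 1$, i.e. $\|u\|_E\le 1$ with $u\ge 0$; and the quantity under the supremum is $\big(\sum_n |x_n|^p u_n\big)^{1/p} = \big(\sum_n \big||x|^p_n u_n\big|\big)^{1/p}$. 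Taking the supremum over such $u$ and using the definition of $\|\cdot\|_{E^\times}$ yields
\begin{align*}
\|x\|_{(E^p)^{l_p}} = \Big(\sup_{\|u\|_E\le 1,\, u\ge 0}\sum_n \big||x|^p_n u_n\big|\Big)^{1/p}
= \big(\||x|^p\|_{E^\times}\big)^{1/p} = \|x\|_{(E^\times)^p},
\end{align*}
the last step being the definition of the $p$-convexification norm applied to $E^\times$.

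The computation is essentially routine; the only point requiring a little care — and the place I expect to be the main (if minor) obstacle — is the passage between the supremum over $y$ in the unit ball of $E^p$ and the supremum over nonnegative $u$ in the unit ball of $E$. One must check that this substitution $u\leftrightarrow |y|^p$ is a genuine bijection between the relevant sets (restricting to $u\ge 0$ causes no loss since the expression depends only on $|y|^p$), and, if one wants to be fully rigorous about the supremum in \eqref{e2} being over all of $E$ rather than its positive cone, invoke that $\|x\cdot y\|_F$ depends only on $|y|$ by symmetry. Once that identification is in place the three equalities above close the argument.
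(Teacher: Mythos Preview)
Your argument is correct. The paper does not give its own proof of this proposition; it simply quotes the result from \cite[Example~1]{MP}, so there is nothing to compare at the level of strategy. The direct computation you carry out---substituting $u=|y|^p$ to pass between the unit ball of $E^p$ and the positive part of the unit ball of $E$, and using $|x\cdot y|^p=|x|^p\cdot|y|^p$ to convert the $l_p$-norm into an $l_1$-pairing---is exactly the natural (and essentially the only) way to verify the identity, and it is what the cited reference does as well. Your remark about restricting to nonnegative $u$ being harmless is the right justification for the one step that needs a word; once that is said, the proof is complete as written.
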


Let $E$, $F$ be sets of sequences, then we will denote $$E\cdot
F:=\{x=y\cdot z\:|\:y\in E, z\in F\}.$$

\begin{prop}\label{p777}\cite[Theorem 1]{JR}
$E\cdot E^\times=l_1$.
\end{prop}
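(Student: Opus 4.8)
The plan is the following. The inclusion $E\cdot E^\times\subseteq l_1$ is immediate from the definition of the K\"othe dual: if $x=y\cdot z$ with $y\in E$ and $z\in E^\times$, then $\sum_n|x_n|=\sum_n|y_nz_n|<\infty$, so $x\in l_1$. Thus the whole content is the reverse inclusion $l_1\subseteq E\cdot E^\times$, which is the sequence-space instance of Lozanovskii's factorization theorem. First I would pass to a normal form: since $E$ and $E^\times$ are symmetric and both contain $c_{00}$, it suffices to factor a single non-increasing, strictly positive $x=x^*$ with $\|x\|_1=1$ --- signs can be pushed into the second factor, a common permutation transports a factorization of $x^*$ back to one of $x$, and a finitely supported $x$ is trivial via $x=x\cdot\mathbf{1}_{\supp x}$ with $x,\mathbf{1}_{\supp x}\in c_{00}$. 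I would also dispose separately of the degenerate case in which the fundamental function $\phi_E(n):=\|\mathbf{1}_{\{1,\dots,n\}}\|_E$ stays bounded: then $\|\cdot\|_\infty\le\|\cdot\|_E\le C\|\cdot\|_\infty$ on $c_{00}$, so $E=c_0$ up to equivalence of norms and $E^\times=l_1$, and $l_1\subseteq c_0\cdot l_1$ is elementary --- group $\mathbb{N}$ into consecutive blocks $B_k$ with $\sum_{n\notin B_1\cup\cdots\cup B_k}x_n\le 4^{-k}$, and take the first factor equal to the constant $2^{-k}$ on $B_k$ and the second equal to $2^k x_n$ there.

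For $\phi_E$ unbounded I would run the variational (minimax) argument behind Lozanovskii's theorem in finite dimensions and then pass to the limit. Fix $N$, set $x^{(N)}=\sum_{n\le N}x_ne_n$, and let $K_N=\{y\ge 0:\supp y\subseteq\{1,\dots,N\},\ \|y\|_E\le1\}$, a compact convex subset of $\mathbb{R}^N$. The function $f(y,w)=\sum_{n\le N}x_nw_n/y_n$ is convex and lower semicontinuous in $y\in K_N$ and linear (hence concave and continuous) in $w\in K_N$, so Sion's minimax theorem applies and
\[
\inf_{y\in K_N}\bigl\|x^{(N)}/y\bigr\|_{E^\times}=\inf_{y\in K_N}\sup_{w\in K_N}f(y,w)=\sup_{w\in K_N}\inf_{y\in K_N}f(y,w)\le\sup_{w\in K_N}f(w,w)\le\sum_{n\le N}x_n\le 1,
\]
where the last two inequalities use the admissible choice $y=w$ in the inner infimum. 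By compactness the outer infimum is attained at some $y^{(N)}\in K_N$, and with $z^{(N)}:=x^{(N)}/y^{(N)}$ one obtains a factorization $y^{(N)}z^{(N)}=x^{(N)}$ with $\|y^{(N)}\|_E\le1$ and $\|z^{(N)}\|_{E^\times}\le1$.

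Since $\|\cdot\|_\infty\le\min\{\|\cdot\|_E,\|\cdot\|_{E^\times}\}$, the coordinates of the $y^{(N)}$ and $z^{(N)}$ stay bounded, so a diagonal extraction produces a subsequence along which $y^{(N)}\to y$ and $z^{(N)}\to z$ coordinatewise; then $y_nz_n=\lim_N x^{(N)}_n=x_n$ for every $n$, so $y\cdot z=x$. Moreover $\phi_E(M)\,(y^{(N)})^*_M\le\|y^{(N)}\|_E\le1$ shows that the $M$-th largest coordinate of each $y^{(N)}$, hence of $y$, is at most $\phi_E(M)^{-1}$, so $y\in c_0$ because $\phi_E$ is unbounded. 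The step that genuinely needs care --- and what I expect to be the main obstacle --- is to conclude that $y\in E$ and $z\in E^\times$, i.e. the lower semicontinuity of the two symmetric norms along coordinatewise convergence (the Fatou property). For a K\"othe dual this always holds, so $z\in E^\times$ with $\|z\|_{E^\times}\le1$ comes for free; for $y$ one has $\|y\|_E\le\liminf_N\|y^{(N)}\|_E\le1$ as soon as $\|\cdot\|_E$ is a Fatou norm, and the reduction of a general symmetric sequence space to this case (passing to the closed separable part of $E$, which leaves $E^\times$ unchanged) is where I expect most of the technical effort to go. Putting the pieces together gives $x=y\cdot z\in E\cdot E^\times$.
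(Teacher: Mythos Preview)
The paper does not prove this proposition at all; it is simply quoted from Jamison and Ruckle \cite{JR}, so there is no ``paper's own proof'' against which to compare your argument.

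Your route is the Lozanovski\u{\i}-type factorization strategy (Sion's minimax in finite dimensions, then a diagonal limit), and it is sound up to the point you yourself single out: concluding $y\in E$ from coordinatewise convergence $y^{(N)}\to y$ together with $\sup_N\|y^{(N)}\|_E\le1$ requires the Fatou property of $\|\cdot\|_E$. Your proposed workaround --- ``pass to the closed separable part of $E$, which leaves $E^\times$ unchanged'' --- does not close this gap: the closure $E_0$ of $c_{00}$ in $E$ has order-continuous norm and hence essentially \emph{never} enjoys the Fatou property unless $E_0=E^{\times\times}$ already (the prototype being $E_0=c_0$, which you had to treat by hand). What your limiting argument actually delivers, without additional hypotheses, is $l_1\subseteq E^{\times\times}\cdot E^\times$, and this is strictly weaker than $l_1\subseteq E\cdot E^\times$ whenever $E\subsetneq E^{\times\times}$.

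That said, two remarks are in order. First, the Jamison--Ruckle argument in \cite{JR} avoids the difficulty by a direct construction that never leaves $E$, which is precisely the content of their theorem; a compactness/limiting proof cannot reach the full statement without an extra idea. Second, within the present paper the proposition is only invoked through Proposition~\ref{p8} (and hence in Theorem~\ref{th5}) for spaces that are assumed $p$-convex, and in that setting one may as well assume the Fatou property from the outset; so your argument, with the Fatou hypothesis made explicit, already covers everything the paper needs downstream.
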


When $E$ is $p$-convex, we can extend the result of Proposition \ref{p777} as follows.

\begin{prop}\label{p8}
If $E$ is $p$-convex, then $E\cdot(E)^{l_p}=l_p$.
\end{prop}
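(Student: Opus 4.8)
The statement to establish is $E\cdot(E)^{l_p}=l_p$ when $E$ is $p$-convex. My plan is to reduce everything to the known identity $G\cdot G^\times=l_1$ from Proposition \ref{p777} by passing to the $p$-concavification. The key observation is that $(E)^{l_p}$ is, by definition of the generalized K\"othe dual, exactly the space $E^{F}$ with $F=l_p$; so I would first try to identify it via Proposition \ref{p6}. Indeed, set $G:=(E)^{1/p}$, which is a symmetric sequence space precisely because $E$ is $p$-convex (Remark \ref{r2}(ii)), and for which $G^p=(( E)^{1/p})^p$ is isometrically isomorphic to $E$ (again Remark \ref{r2}(ii)). Then Proposition \ref{p6} applied to $G$ gives $(G^p)^{l_p}=(G^\times)^p$, i.e. $E^{l_p}=(G^\times)^p$, which identifies $(E)^{l_p}$ as the $p$-convexification of the K\"othe dual of $G$.

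\textbf{Main computation.} With that identification in hand, the proof becomes a manipulation of $p$-th powers of sequences. I would argue: $x\in E\cdot(E)^{l_p}$ iff $x=y\cdot z$ with $y\in E\cong G^p$ and $z\in (G^\times)^p$, which happens iff $|y|^{1/p}\in G$, $|z|^{1/p}\in G^\times$ (up to the isometric identification). Writing $u:=|y|^{1/p}\cdot\sgn(y)^{?}$ is awkward with signs, so cleaner is to work with absolute values throughout, since all the spaces in question are symmetric (solid): $|x|=|y|\cdot|z|$ and $|x|^{1/p}=|y|^{1/p}\cdot|z|^{1/p}\in G\cdot G^\times=l_1$ by Proposition \ref{p777}. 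Conversely any element of $l_1$ factors as a product of an element of $G$ and an element of $G^\times$, and taking $p$-th powers of the factors (and restoring the sign of $x$ on one factor) exhibits $x$ as lying in $E\cdot(E)^{l_p}=G^p\cdot(G^\times)^p$. The equivalence $|x|^{1/p}\in l_1\iff |x|\in l_{p}$ (equivalently $x\in l_p$) is immediate from the definition of $\|\cdot\|_p$, so this closes the argument that the two sets coincide.

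\textbf{Where the care is needed.} The routine-looking but genuinely delicate points are: (a) the isometric identification $E\cong G^p$ for $G=(E)^{1/p}$ must be used in the right direction, and one should note it requires $p$-convexity — without it $(E)^{1/p}$ need not even be a Banach space (Remark \ref{r2}(ii)); (b) the set-product $E\cdot F$ is only a set of sequences, not a normed space, so the whole claim is a set-theoretic equality and I must be careful that factorizations can be chosen with the correct sign pattern, which is why I phrase the core step in terms of $|x|$ and then restore $\sgn x$; and (c) one must check that $x^{1/p}$, for $x$ a sequence of nonnegative terms, is a well-defined element of $c_0$ and that raising to the power $p$ and back is a bijection between the relevant solid sets of sequences — this is elementary but should be stated. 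I expect the main (mild) obstacle to be bookkeeping the identification in Remark \ref{r2}(ii) together with Proposition \ref{p6}; once those are lined up, the equality $E\cdot(E)^{l_p}=l_p$ drops out of Proposition \ref{p777} by taking $p$-th powers.
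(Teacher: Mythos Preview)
Your strategy is exactly the paper's: set $G=(E)^{1/p}$, identify $E^{l_p}=(G^\times)^p$ via Proposition~\ref{p6}, and reduce to $G\cdot G^\times=l_1$ from Proposition~\ref{p777}. However, you have the exponents inverted in the main computation. By definition $G^p=\{x:|x|^p\in G\}$, so $y\in E=G^p$ means $|y|^p\in G$, \emph{not} $|y|^{1/p}\in G$; likewise $z\in(G^\times)^p$ means $|z|^p\in G^\times$. The correct line is therefore
\[
|x|^p=|y|^p\cdot|z|^p\in G\cdot G^\times=l_1,
\]
which is equivalent to $x\in l_p$. Your stated equivalence ``$|x|^{1/p}\in l_1\iff x\in l_p$'' is false --- that condition characterizes $l_{1/p}$, not $l_p$. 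This is precisely the bookkeeping hazard you flagged in point~(c), and it bites.

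Once the exponents are corrected, your argument is the paper's proof verbatim, including the sign-restoration step $z=\sgn(z)\,|y_1|^{1/p}|y_2|^{1/p}$ with $y_1\in G$, $y_2\in G^\times$. The paper also notes that the inclusion $E\cdot E^{l_p}\subset l_p$ is immediate from the very definition of $E^{l_p}$ (if $y\in E$ and $z\in E^{l_p}$ then $y\cdot z\in l_p$ by \eqref{e1}), so only the reverse inclusion actually needs the factorization through Proposition~\ref{p777}.
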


\begin{proof}
By the definition of the space $E^{l_p}$ we have that
$E\cdot(E)^{l_p}\subset l_p$.

We claim that $E\cdot(E)^{l_p}\supset l_p$.

Since $E$ is $p$-convex, the space $(E)^{1/p}$ is a symmetric
sequence space and we have that $((E)^{1/p})^p=E$ (see Remark
\ref{r2}). Denoting $Y=(E)^{1/p}$, we have $Y^p=E$ and
$(Y^\times)^p=E^{l_p}$ (see Proposition \ref{p6}). Thus our claim
is that $Y^p\cdot(Y^\times)^p\supset l_p$.

If $z\in l_p$, then  $|z|^p \in l_1$. By Proposition \ref{p777} we
have that exist $y_1\in Y$ and $y_2\in Y^\times$ such that
$|z|^p=y_1\cdot y_2$. Hence,
$|z|=(y_1y_2)^{1/p}=|y_1|^{1/p}|y_2|^{1/p}$, and
$z=\sgn(z)|y_1|^{1/p}|y_2|^{1/p}$.

Since $y_1\in Y$, we have $\sgn(z)|y_1|^{1/p}\in Y^p$. Similarly
we can obtain that $|y_2|^{1/p}\in (Y^\times)^p$. Thus the
inclusion $z\in Y^p\cdot(Y^\times)^p$ holds and the claim is
established.
\end{proof}

The second generalized K\"{o}the dual is defined by $E^{F
F}:=(E^{F})^{F}$.
 The following proposition is taken from
\cite{CDS} (see there Theorem 3.4 and Proposition 5.3,
respectively).

\begin{prop}\label{p9} (i)
If $E$ is $q$-concave for $1<q\leq \infty$, then $l_q^{E E}=l_q$.

(ii) If $E$ is $p$-convex for $1\leq p< \infty$, then
$E^{l_pl_p}=E$.

\end{prop}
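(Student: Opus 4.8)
The plan is to deduce both statements from the identity $E \cdot (E)^{l_p} = l_p$ (Proposition \ref{p8}) together with the duality-type facts already collected, handling (ii) first and then extracting (i) by a convexification argument. For part (ii), the inclusion $E \subset E^{l_p l_p}$ is automatic from Proposition \ref{p3} applied twice (or directly, since any $x \in E$ multiplies $E^{l_p}$ into $l_p$ by the very definition of $E^{l_p}$, hence multiplies $E^{l_p}$ into $l_p$-multipliers... more carefully: $x \in E$ and $y \in E^{l_p}$ give $x \cdot y \in l_p$, so $x \in (E^{l_p})^{l_p}$). For the reverse inclusion $E^{l_p l_p} \subset E$, take $x \in E^{l_p l_p}$; I want to show $\|x\|_E \lesssim \|x\|_{E^{l_p l_p}}$. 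The idea is: for any $y \in E$ with $\|y\|_E \le 1$, use Proposition \ref{p8} to factor — actually the cleaner route is to test $x$ against elements of $E^{l_p}$. Given $w \in E$ with $\|w\|_E \le 1$, I want to produce $z \in E^{l_p}$ with controlled norm so that $\langle |x|, |w| \rangle$ (or an appropriate $l_p$-pairing) is dominated; since $E$ is $p$-convex, $(E)^{1/p}$ is a symmetric sequence space with $((E)^{1/p})^p = E$, and by Proposition \ref{p8} applied to $(E)^{1/p}$ we get $(E)^{1/p} \cdot ((E)^{1/p})^{l_1} = l_1$, i.e. $E^{1/p}\cdot (E^{\times})^{1/p}\cdot\ldots$ — here I would invoke Proposition \ref{p6} to identify $(E^{l_p})^{1/p}$-type spaces. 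The key computation is the factorization $|w| = |u|\cdot|v|$ with $u \in (E)^{1/p}$, $|v|^? \in \ldots$, pushing $|x \cdot w|$ into a pairing between $E^{l_p l_p}$ and $E^{l_p}$ whose value is $\|x\|_{E^{l_p l_p}} \|v\|$. This reduces (ii) to the norm estimate $\|w\|_E \approx$ (factored norm), which is exactly the content of Proposition \ref{p8} (that the factorization $l_p = E \cdot E^{l_p}$ holds with norm control, via the open mapping / closed graph theorem as in Remark \ref{r1}).

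For part (i), I would apply part (ii) to a suitably chosen space. Since $E$ is $q$-concave with $1 < q \le \infty$, let $q'$ be the conjugate index, $\tfrac1q + \tfrac1{q'} = 1$. If $E$ is separable (which Remark \ref{r6} guarantees when $q < \infty$; the case $q = \infty$ is Remark \ref{r4} and $l_\infty^{EE}=E^{EE}$... no — for $q=\infty$, $l_\infty^{EE} = (l_\infty^E)^E = E^E \ni 1$, and actually $l_\infty^{l_\infty} = l_\infty$ by Proposition \ref{p11}(i), so the case $q=\infty$ is immediate). For $1 < q < \infty$: by Proposition \ref{p2}, $E^\times$ (which is a genuine symmetric sequence space by Remark \ref{r6}) is $q'$-convex. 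Then I would like to say $l_q = (E^\times)^{l_{q'}}$ for an appropriate relationship — but the cleaner identity is via Proposition \ref{p11}(ii): $l_{q'}^{l_1} = l_q$ when... no, $\tfrac1{q'} + \tfrac1q = 1$ gives $l_{q'}^{l_1} = l_q$. So $l_q = (l_{q'})^{l_1} = (l_{q'})^\times$. Hmm — the route I actually expect to work: take $G := E^\times$, which is $q'$-convex; by part (ii) applied to $G$ (with exponent $q'$), $G^{l_{q'} l_{q'}} = G$. Now I would compute $l_q^{EE}$ by rewriting $E$ in terms of $G$: we have $E = E^{\times\times} = G^\times = G^{l_1}$ (using that $E$, being $q$-concave hence having order-continuous norm by Remark \ref{r6}, has the Fatou property... this needs a small justification, but $q$-concavity with $q<\infty$ gives separability and one checks the Fatou norm holds, or cite it). Then $l_q^{EE} = l_q^{(G^{l_1})(G^{l_1})}$, and I would unwind this using associativity-type identities for iterated generalized Köthe duals — the relation $(A^B)^C$ manipulations — ultimately landing on $G^{l_{q'}l_{q'}} = G$ and hence $l_q^{EE} = l_q$.

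The main obstacle I anticipate is part (i): the bookkeeping of iterated generalized Köthe duals $l_q^{EE}$ does not follow from a single clean associativity law (generalized Köthe duality is not associative in general), so I expect to need an honest two-sided estimate. The inclusion $l_q \subset l_q^{EE}$ is the easy direction (Proposition \ref{p3} twice, or directly: $x \in l_q$, $y \in l_q^E$, then $x \cdot y \in$ ... requires $l_q \cdot l_q^{E} \subset E$, i.e. $l_q \subset E^{l_q^E}$ — itself needing $q$-concavity, so even the easy direction uses the hypothesis). The hard direction, $l_q^{EE} \subset l_q$, is where I would lean hardest on $q$-concavity: given $x \in l_q^{EE}$, I must exhibit enough elements of $l_q^E$ to "separate" $x$ and force $\|x\|_q \lesssim \|x\|_{l_q^{EE}}$, and the construction of those elements is exactly where the $q$-concavity constant $M_{(q)}(E)$ enters. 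I expect the argument to mirror the structure of Proposition \ref{p8}'s proof, replacing the role of $l_1$ (and Proposition \ref{p777}) by $l_q$ (and the $q$-concave analogue), so the cited results Theorem 3.4 and Proposition 5.3 of \cite{CDS} presumably carry out precisely this factorization; in this write-up I would present the reduction and invoke those results for the technical core.
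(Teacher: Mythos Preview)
The paper does not give its own proof of this proposition: it is stated with a citation to \cite{CDS} (Theorem~3.4 and Proposition~5.3 there) and no argument is supplied. So there is nothing in the paper to compare your sketch against beyond noting that the paper simply imports the result, and that you yourself end by deferring to the same reference.

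That said, your sketch contains a concrete confusion worth flagging. In part~(i) you write that even the ``easy'' inclusion $l_q\subset l_q^{EE}$ requires $q$-concavity, because it ``requires $l_q\cdot l_q^{E}\subset E$''. But that containment is the \emph{definition} of $l_q^{E}$: if $x\in l_q$ and $y\in l_q^{E}$ then $x\cdot y\in E$, hence $x\in (l_q^{E})^{E}=l_q^{EE}$. So $l_q\subset l_q^{EE}$ is automatic for any symmetric sequence space $E$; concavity is needed only for the reverse inclusion. Your handling of the endpoint $q=\infty$ is also off: under the paper's convention $E^{F}\subset c_0$ (see \eqref{e1}), one gets $l_\infty^{EE}=E^{E}=c_0$, not $l_\infty$, and the constant sequence $1$ you invoke does not lie in $E^{E}$.

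For part~(ii) your outline never reaches an actual argument for the hard inclusion $E^{l_pl_p}\subset E$. The natural route you gesture at --- set $Y=E^{1/p}$, use Proposition~\ref{p6} twice to get $E^{l_pl_p}=(Y^{\times\times})^{p}$ --- reduces the question to $Y^{\times\times}=Y$, i.e.\ a Fatou/perfectness property of $Y$, which you do not justify and which is precisely the substance of the result in \cite{CDS}. Your attempt instead to ``test $x$ against $w\in E$'' cannot detect membership in $E$; one needs to test against $E^{l_p}$ (equivalently $E^{\times}$ after concavification), and controlling that is where the work lies. As written, the proposal is a plan with a couple of missteps rather than a proof, and it ultimately leans on the same external reference the paper does.
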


\begin{rem}
  \label{r8} Since any symmetric sequence space $E$ is a solid subspace of $l_\infty$,
we have that $l_\infty\cdot E = E$.
\end{rem}

\section{Schur multipliers.}

Let $E$ and $F$ be symmetric sequence spaces. For every $A\in B(E, F)$, we set for brevity $$\|A\|_{E,
F}:=\|A\|_{B(E, F)}=\sup\limits_{\|x\|_E\leq 1}\|A(x)\|_F$$ and

 $$\|A\|_{1, F}:=\|A\|_{B(l_1, F)},\quad \|A\|_{E,
\infty}:=\|A\|_{B(E, l_\infty)}.$$

Any such operator $A$ can be identified with the matrix $A=
(a_{ij})_{i,j=1}^\infty$, whose every row represents an element from
$E^\times$ and every column represents an element from $F$. For a sequence
$x=(x_n)_{n\ge 1}\in E$, we have $A(x)=\{\sum\limits_{j}a_{ij}x_j\}_i\in
F.$

\begin{prop}\label{p12} If $E$ and $F$ are symmetric sequence spaces and $F$ has a Fatou-norm,
then $\|A\|_{E,F}=\|A^T\|_{F^\times,E^\times}$, where $A^T$ is the
transpose matrix for $A$.
\end{prop}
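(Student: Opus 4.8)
The plan is to establish the duality identity $\|A\|_{E,F}=\|A^T\|_{F^\times,E^\times}$ by writing each operator norm as a supremum of bilinear pairings and then invoking the Fatou property of $F$ to pass between $F$ and $F^{\times\times}$. First I would recall that for a symmetric sequence space $F$ with a Fatou-norm we have $\|z\|_F=\|z\|_{F^{\times\times}}=\sup\{\sum_i |z_i w_i| : \|w\|_{F^\times}\le 1\}$ for every $z\in F$; this is precisely the statement from \cite[Part I, Chapter X, \S 4, Theorem 7]{KA} quoted in Section 3. Applying this with $z=A(x)$, for any $x\in E$ with $\|x\|_E\le 1$ we get
\[
\|A(x)\|_F=\sup_{\|w\|_{F^\times}\le 1}\sum_i\Big|\sum_j a_{ij}x_j\Big|\,|w_i|\le \sup_{\|w\|_{F^\times}\le 1}\sum_{i,j}|a_{ij}|\,|x_j|\,|w_i|.
\]
Taking the supremum over $\|x\|_E\le 1$ as well, and noting that the right-hand side is symmetric under interchanging the roles of the pair $(E,x)$ and $(F^\times,w)$ together with transposing $A$, one sees that $\|A\|_{E,F}$ is bounded above by the quantity $\sup\{\sum_{i,j}|a_{ij}x_jw_i| : \|x\|_E\le1,\ \|w\|_{F^\times}\le1\}$, which in turn — reading the inner sum over $i$ first — equals $\sup_{\|x\|_E\le1}\sup_{\|w\|_{F^\times}\le1}\sum_j|x_j|\,\big|\sum_i a^T_{ji} w_i\big|$... except that the absolute values are now outside, so I must be a little careful: the clean route is to observe that for each fixed $x$ and $w$, the value $\sum_{i,j}|a_{ij}|\,|x_j|\,|w_i|$ equals $\||A|(|x|)\|$-type pairing, and since $E$ and $F^\times$ are symmetric (hence solid), replacing $x,w$ by $|x|,|w|$ and $A$ by $|A|$ does not change any of the four norms in sight. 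Thus without loss of generality all entries are nonnegative, and the three-line computation above becomes a genuine chain of equalities rather than inequalities.

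With nonnegativity in hand, the symmetric bilinear quantity $\Phi(A):=\sup\{\sum_{i,j}a_{ij}x_jw_i : \|x\|_E\le1,\ \|w\|_{F^\times}\le1\}$ satisfies $\Phi(A)\ge\|A\|_{E,F}$ from the Fatou computation above, and $\Phi(A)=\Phi(A^T)$ trivially by relabeling. So it remains to prove the reverse inequality $\Phi(A)\le\|A\|_{E,F}$; by symmetry the same argument applied to $A^T$ (using that $E^{\times\times}$ also has the Fatou property whenever... actually I only need the Fatou property of $F$, so I should route everything through the $F$ side) then yields $\Phi(A^T)\le\|A^T\|_{F^\times,E^\times}$, and combining the four relations gives $\|A\|_{E,F}=\Phi(A)=\Phi(A^T)=\|A^T\|_{F^\times,E^\times}$. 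For $\Phi(A)\le\|A\|_{E,F}$: fix $x$ with $\|x\|_E\le1$; then $A(x)=\big(\sum_j a_{ij}x_j\big)_i$, and for any $w$ with $\|w\|_{F^\times}\le1$ we have $\sum_i\big(\sum_j a_{ij}x_j\big)w_i\le\sum_i |A(x)_i|\,|w_i|\le\|A(x)\|_{F^{\times\times}}$ by definition of the $F^\times$-norm on $w$ — and here the Fatou hypothesis enters decisively, since it gives $\|A(x)\|_{F^{\times\times}}=\|A(x)\|_F\le\|A\|_{E,F}$. Taking suprema over $x$ and $w$ completes the bound.

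One technical point to handle with care is that for the identification $\|z\|_F=\|z\|_{F^{\times\times}}$ I need $A(x)\in F$ to begin with, i.e. I am working with $A\in B(E,F)$ from the outset, so $A(x)$ genuinely lies in $F$ and the Fatou identity applies to it; similarly, the entries being in $\ell_\infty$ (rows in $E^\times$, columns in $F$) guarantees every finite partial sum manipulation is legitimate and the passage to infinite sums is justified by monotone convergence once everything is nonnegative. I expect the only real obstacle to be bookkeeping: making sure that reducing to $A\ge0$, $x\ge0$, $w\ge0$ is fully justified (it is, by solidity of symmetric spaces, \`a la Remark \ref{r8} and condition (i) in the definition of a symmetric sequence space) and that the Fatou property is invoked on the correct side — it is needed precisely to turn the a priori inequality $\|A(x)\|_{F^{\times\times}}\le\|A(x)\|_F$ (which always holds) into an equality, so that the pairing estimate is not lost. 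No deeper idea is required; the statement is essentially the assertion that $B(E,F)$ pairs with $E\otimes F^\times$ in the expected way, with the Fatou condition supplying reflexivity of $F$ at the level needed for the norm to be recovered by duality.
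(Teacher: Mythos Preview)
Your overall strategy---express $\|A\|_{E,F}$ as a double supremum of the bilinear pairing $\langle A(x),w\rangle$ using the Fatou identity $\|\cdot\|_F=\|\cdot\|_{F^{\times\times}}$, then swap to $\langle x,A^T(w)\rangle$ and read off $\|A^T\|_{F^\times,E^\times}$---is exactly the paper's approach, and all the ingredients you name (Fatou on the $F$ side, definition of the K\"othe dual on the $E$ side) are the right ones.

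The genuine gap is the step ``replacing $x,w$ by $|x|,|w|$ and $A$ by $|A|$ does not change any of the four norms in sight.'' Solidity gives $\|x\|_E=\||x|\|_E$ and $\|w\|_{F^\times}=\||w|\|_{F^\times}$, but it does \emph{not} give $\|A\|_{E,F}=\||A|\|_{E,F}$: take $E=F=\ell_2$ and $A=\tfrac{1}{\sqrt2}\begin{pmatrix}1&1\\1&-1\end{pmatrix}$, which has operator norm $1$, while $|A|$ has operator norm $\sqrt2$. So the reduction to nonnegative matrices is invalid, and with it the claim that your inequality chain becomes a chain of equalities. The fix is not to introduce the absolute values on $a_{ij}$ at all: since you may choose the signs of the $w_i$ freely (solidity of $F^\times$), the Fatou identity already gives
\[
\|A(x)\|_F=\sup_{\|w\|_{F^\times}\le1}\Big|\sum_i (A(x))_i\,w_i\Big|=\sup_{\|w\|_{F^\times}\le1}\big|\langle A(x),w\rangle\big|=\sup_{\|w\|_{F^\times}\le1}\big|\langle x,A^T(w)\rangle\big|,
\]
and then $\sup_{\|x\|_E\le1}|\langle x,A^T(w)\rangle|=\|A^T(w)\|_{E^\times}$ by the same sign-choice trick on the $E$ side (no Fatou needed there---this is just the definition of the K\"othe dual norm). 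This is precisely the paper's three-line computation; your detour through $\Phi(|A|)$ is both unnecessary and where the error enters.
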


\begin{proof}
$$\|A\|_{E,F}=\sup\limits_{\|x\|_E\leq
1}\|A(x)\|_F=\sup\{\langle A(x),y\rangle:\|x\|_E\leq
1,\|y\|_{F^\times}\leq 1\}=$$ $$\sup\{\langle
x,A^T(y)\rangle:\|x\|_E\leq 1,\|y\|_{F^\times}\leq
1\}=\sup\limits_{\|y\|_{F^\times}\leq
1}\|A^T(y)\|_{E^\times}=\|A^T\|_{F^\times,E^\times}.$$
\end{proof}

The following preposition presents the  formulae for computing the
norm of  $A=(a_{ij})_{i,j=1}^\infty\in B(E, F)$ in some special
cases.

\begin{prop}\label{p19}\quad If $E$ is a symmetric sequence space,
then

(i) $\|A\|_{1,E}=\sup\limits_j\|(a_{ij})_i\|_E$;

(ii) $\|A\|_{E,\infty}=\sup\limits_i\|(a_{ij})_j\|_{E^\times}$.
\end{prop}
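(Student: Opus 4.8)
The plan is to compute the two operator norms directly from the definitions, using the fact that the unit ball of $l_1$ is spanned by the standard basis vectors and that the norm on $l_\infty$ is a supremum of coordinate functionals paired against $E^\times$.

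For part (i), observe that $\|A\|_{1,E}=\sup\{\|A(x)\|_E:\|x\|_{l_1}\leq 1\}$. Since the extreme points of the unit ball of $l_1$ are $\pm e_j$ and $A$ is linear with values in a space whose norm is monotone with respect to $|\cdot|$ (property (i) of a symmetric space), the supremum over the unit ball is attained in the limit on finitely supported sequences, and in fact one checks $\|A(x)\|_E\leq\sum_j|x_j|\,\|A(e_j)\|_E\leq\sup_j\|A(e_j)\|_E$ for $\|x\|_{l_1}\leq1$; conversely testing on $x=e_j$ gives $\|A\|_{1,E}\geq\|A(e_j)\|_E$. Since $A(e_j)=(a_{ij})_i$, this yields $\|A\|_{1,E}=\sup_j\|(a_{ij})_i\|_E$. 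The only subtlety is that a general $x$ in the $l_1$-ball is an infinite convex combination of the $\pm e_j$; one handles this by truncation and the triangle inequality, using that finitely supported sequences are dense in $l_1$ and that the partial sums of $\sum_j x_j A(e_j)$ converge in $F$ because $A$ is bounded --- this is routine and I would not spell it out in full.

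For part (ii), I would apply Proposition \ref{p12}. The space $l_\infty$ has a Fatou-norm, so $\|A\|_{E,\infty}=\|A^T\|_{(l_\infty)^\times,E^\times}$. Now $(l_\infty)^\times=l_1$ (the K\"othe dual of $l_\infty$ is $l_1$), so $\|A\|_{E,\infty}=\|A^T\|_{l_1,E^\times}$, and applying part (i) to the matrix $A^T=(a_{ji})$ gives $\|A^T\|_{l_1,E^\times}=\sup_j\|((A^T)_{ij})_i\|_{E^\times}=\sup_i\|(a_{ij})_j\|_{E^\times}$, which is the desired formula. Alternatively, and perhaps more cleanly to avoid invoking the transpose machinery, one argues directly: $\|A(x)\|_{l_\infty}=\sup_i|\langle (a_{ij})_j,x\rangle|$, and $\sup_{\|x\|_E\leq1}|\langle (a_{ij})_j,x\rangle|=\|(a_{ij})_j\|_{E^\times}$ by the very definition of the K\"othe dual norm; taking the supremum over $i$ and over $\|x\|_E\le1$ in either order (justified since all quantities are nonnegative) gives $\|A\|_{E,\infty}=\sup_i\|(a_{ij})_j\|_{E^\times}$.

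I do not anticipate a genuine obstacle here; the statement is essentially a bookkeeping exercise. The one point that requires a little care --- and the place where a careless proof could go wrong --- is the interchange of suprema in (ii) and, in (i), the passage from finitely supported $x$ to arbitrary $x$ in the $l_1$-ball, where one must know a priori that $A(x)$ genuinely lies in $F$ with the expected norm bound; this is exactly what boundedness of $A\colon l_1\to E$ (resp. $A\colon E\to l_\infty$) provides, so there is no circularity. I would therefore present (i) with the short truncation argument and deduce (ii) either from Proposition \ref{p12} or from the direct duality computation, whichever the surrounding text makes more natural.
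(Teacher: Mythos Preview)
Your proposal is correct and follows essentially the same approach as the paper: for (i) you test on $e_j$ for the lower bound and use the triangle inequality $\|A(x)\|_E\le\sum_j|x_j|\|(a_{ij})_i\|_E$ for the upper bound, and for (ii) you apply Proposition~\ref{p12} together with part (i), exactly as the paper does. Your alternative direct argument for (ii) via the definition of the K\"othe dual norm is also valid and is a nice self-contained option.
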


\begin{proof}

(i) By definition we have $$\|A\|_{1, E}=\sup\limits_{\|x\|_1\leq
1}\|A(x)\|_E=\sup\limits_{\|x\|_1\leq
1}\|\{\sum\limits_{j}a_{ij}x_j\}_i\|_E.$$ Hence, if $x=e_j$, where
$e_j=(e_j^k)_k\in c_0$ such, that $e_j^k=1$ for $j=k$ and
$e_j^k=0$ for $j\neq k$, then $\|A\|_{1, E}\geq\|\{a_{ij}\}_i\|_E$
for $j=1,2,...$, that is $\|A\|_{1,
E}\geq\sup\limits_j\|\{a_{ij}\}_i\|_E$.

Using the triangle inequality for the norm, we obtain the converse
inequality
$$\|A(x)\|_E=\|\{\sum\limits_{j}a_{ij}x_j\}_i\|_E\leq
\sum\limits_{j}|x_j|\|\{a_{ij}\}_i\|_E\leq$$ $$
\sup\limits_j\|\{a_{ij}\}_i\|_E\sum\limits_{j}|x_j|=
\sup\limits_j\|\{a_{ij}\}_i\|_E\|x\|_1.$$ Hence, $\|A\|_{1,
E}\leq\sup\limits_j\|\{a_{ij}\}_i\|_E$.

(ii) Applying Propositions \ref{p12} and \ref{p19} (i) we obtain
$\|A\|_{E,\infty}=\|A^T\|_{1,E^\times}=\sup\limits_i\|(a_{ij})_j\|_{E^\times}$.
\end{proof}

If  $A=(a_{ij})$ and $B=(b_{ij})$ are matrices of the same size
(finite or infinite), their \textit{Schur product} is defined to
be the matrix of elements-wise products $A\ast B=(a_{ij}b_{ij})$.

\begin{defn} An infinite matrix $M=(m_{ij})$ is called \textit{$(E,F)$-multiplier}
if $M\ast A\in B(E,F)$ for every  $A\in B(E,F)$.
\end{defn}

The set of all $(E,F)$-multipliers is denoted by
\begin{equation}\label{e5}\mathcal M(E,F):=\{M:M\ast A\in B(E,F),
\forall A\in B(E,F)\}.\end{equation} The collection $\mathcal
M(E,F)$ is a normed space with respect to the norm
\begin{equation}\label{e4}\|M\|_{(E,F)}:=\sup\limits_{\|A\|_{E,F}\leq 1}\|M\ast
A\|_{E,F}\end{equation} (when $E=l_p$ and $F=l_q$, we use the notation $\mathcal M(p,q)$ for \eqref{e5} and
$\|M\|_{(p,q)}$ for \eqref{e4}).

\begin{rem} \label{r3} (i) Viewing $M\in\mathcal M(E,F)$
as a linear operator $M : B(E, F) \to B(E, F)$, one easily checks that the supremum in \eqref{e4} is finite via the closed graph theorem.

(ii) Since $\|M\|_{(E,F )} = \sup\limits_{\|A\|_{E,F}\leq
1}\|M\ast A\|_{E,F}\geq \|M\ast u_{jk}\|_{E,F}=|m_{jk}|$ for every
$j, k = 1, 2, ...$, where $u_{jk} = (u^{(nm)}_{jk})_{nm}$ such
that $u^{(nm)}_{jk} = 1$ if $n = j$, $m = k$ and $u^{(nm)}_{jk}=
0$ otherwise, for $j, k, n,m = 1, 2, ...$, we have that
$\|M\|_{(E,F )}\geq\sup\limits_{j,k}|m_{jk}|$.
\end{rem}

The proofs of Theorem \ref{ThmBS} and Lemma \ref{l1} below are routine and incorporated here for
convenience of the reader.

\begin{thm}\label{ThmBS}

The normed space $(\mathcal M(E, F), \|\cdot\|_{(E,F)})$ is complete.

\end{thm}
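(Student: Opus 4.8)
The plan is to show that a Cauchy sequence $(M^{(k)})_{k\ge 1}$ in $(\mathcal M(E,F),\|\cdot\|_{(E,F)})$ converges to some $M\in\mathcal M(E,F)$. First I would use Remark \ref{r3}(ii), which gives $\|M^{(k)}-M^{(l)}\|_{(E,F)}\ge\sup_{j,m}|m^{(k)}_{jm}-m^{(l)}_{jm}|$, to conclude that for each fixed pair of indices $(j,m)$ the scalar sequence $(m^{(k)}_{jm})_k$ is Cauchy in the scalar field, hence converges to some number $m_{jm}$. Set $M:=(m_{jm})_{j,m}$. The task is then to verify (a) that $M$ is an $(E,F)$-multiplier, and (b) that $\|M^{(k)}-M\|_{(E,F)}\to 0$.

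For both points I would fix $A\in B(E,F)$ with $\|A\|_{E,F}\le 1$ and $\varepsilon>0$. Since $(M^{(k)})$ is Cauchy, choose $N$ with $\|M^{(k)}-M^{(l)}\|_{(E,F)}<\varepsilon$ for $k,l\ge N$, so that $\|(M^{(k)}-M^{(l)})\ast A\|_{E,F}<\varepsilon$. Now I would exploit that the norm $\|\cdot\|_F$ on a symmetric sequence space has the Fatou property on finitely supported truncations: for any finite rectangle $P_n$ (the projection onto the first $n$ coordinates in both row and column indices) one has $\|P_n((M^{(k)}-M^{(l)})\ast A)P_n\|_{E,F}\le\|(M^{(k)}-M^{(l)})\ast A\|_{E,F}<\varepsilon$, because compressing by coordinate projections does not increase the $B(E,F)$-norm (the row projection only restricts the domain, the column projection only shrinks $|A(x)|$ pointwise, and $\|\cdot\|_F$ is monotone by axiom (i)). On the finite rectangle $P_n$, the entrywise convergence $m^{(l)}_{jm}\to m_{jm}$ is convergence in a finite-dimensional space, so letting $l\to\infty$ gives $\|P_n((M^{(k)}-M)\ast A)P_n\|_{E,F}\le\varepsilon$ for all $k\ge N$ and all $n$. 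Taking the supremum over $n$ and using that $\|B\|_{E,F}=\sup_n\|P_nBP_n\|_{E,F}$ for any matrix $B$ (again by monotonicity of $\|\cdot\|_F$ and the fact that $P_nx\to x$ in $E$ for $x\in c_0$, together with lower semicontinuity of $\|\cdot\|_F$ under coordinatewise limits of increasing nonnegative sequences) yields $\|(M^{(k)}-M)\ast A\|_{E,F}\le\varepsilon$ for all $k\ge N$.

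From $\|(M^{(k)}-M)\ast A\|_{E,F}\le\varepsilon$ with $k=N$ fixed I get, by the triangle inequality, $\|M\ast A\|_{E,F}\le\|M^{(N)}\ast A\|_{E,F}+\varepsilon\le\|M^{(N)}\|_{(E,F)}+\varepsilon<\infty$; since $A$ was arbitrary with $\|A\|_{E,F}\le 1$, this shows $M\ast A\in B(E,F)$ whenever $A\in B(E,F)$, i.e. $M\in\mathcal M(E,F)$. Then, taking the supremum over all such $A$ in $\|(M^{(k)}-M)\ast A\|_{E,F}\le\varepsilon$ gives $\|M^{(k)}-M\|_{(E,F)}\le\varepsilon$ for all $k\ge N$, which is precisely $M^{(k)}\to M$ in $\mathcal M(E,F)$.

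The main obstacle is making rigorous the reduction to finite rectangles, specifically the identity $\|B\|_{E,F}=\sup_n\|P_nBP_n\|_{E,F}$ and the interchange of the limit $l\to\infty$ with the $B(E,F)$-norm. The inequality $\sup_n\|P_nBP_n\|_{E,F}\le\|B\|_{E,F}$ is immediate from axiom (i), but the reverse requires showing that for $x\in E$ one has $\|Bx\|_F=\lim_n\|P_n B P_n x\|_F$ (or at least that $\|Bx\|_F\le\sup_n\|P_nBP_nx\|_F$), which uses $P_nx\to x$ in $E$ (valid since $E\subset c_0$ and the norm is symmetric — truncation tails go to zero in norm on $c_0\cap E$) plus continuity of $B$ and coordinatewise-monotone lower semicontinuity of $\|\cdot\|_F$; I would state this as a small separate observation. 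Everything else is a routine $\varepsilon$-chase.
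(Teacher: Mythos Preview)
Your overall strategy (extract the entrywise limit $M$, then verify $M\in\mathcal M(E,F)$ and $M^{(k)}\to M$) is sound, but the execution of what you yourself flag as ``the main obstacle'' has a genuine gap. The assertion ``$P_nx\to x$ in $E$ (valid since $E\subset c_0$ and the norm is symmetric --- truncation tails go to zero in norm on $c_0\cap E$)'' is \emph{false} in general: it is equivalent to order-continuity (equivalently, separability) of $E$, which the paper does not assume. Remark~\ref{r6} explicitly contemplates non-separable $E$; a concrete counterexample is the weak-$l_p$ space $l_{p,\infty}$ for $p>1$ (renormed to be Banach), where $x=(n^{-1/p})_n$ satisfies $\|x-P_nx\|\not\to 0$. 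Likewise, the ``coordinatewise-monotone lower semicontinuity of $\|\cdot\|_F$'' you invoke is the Fatou property, which the paper treats as an additional hypothesis (see the paragraph before Proposition~\ref{p12}), not a standing assumption. Without it you cannot pass from $\sup_m\|P_m(Cx)\|_F\le\varepsilon$ to $Cx\in F$, let alone to $\|Cx\|_F\le\varepsilon$.

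The paper avoids truncations entirely by observing that $\mathcal M(E,F)\subseteq B(B(E,F))$ isometrically, so it suffices to show $\mathcal M(E,F)$ is closed there. If $M_n\to T$ in $B(B(E,F))$, one checks $T(u_{jk})=m_{jk}u_{jk}$ on matrix units and concludes $T$ is the Schur multiplier $M$. Your argument can be repaired along the same lines without truncating: for each fixed $A\in B(E,F)$ the sequence $(M^{(k)}\ast A)_k$ is Cauchy in the \emph{complete} space $B(E,F)$, hence converges to some $S_A\in B(E,F)$; then identify $S_A=M\ast A$ by noting that $(M^{(k)}\ast A)(x)\to S_A(x)$ in $F$ (hence coordinatewise), while dominated convergence (the $m^{(k)}_{ij}$ are uniformly bounded by Remark~\ref{r3}(ii) and the rows of $A$ lie in $E^\times$) gives $\sum_j m^{(k)}_{ij}a_{ij}x_j\to\sum_j m_{ij}a_{ij}x_j$. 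Uniformity in $A$ then follows by letting $l\to\infty$ in $\|(M^{(k)}-M^{(l)})\ast A\|_{E,F}<\varepsilon$.
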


\begin{proof}

 Since $ M(E, F) \subseteq B (B(E, F)), $ all we need to see is
  that~$M(E, F)$ is closed.  Take a sequence of~$M_n \in M(E, F)$, $n
  \geq 1$.  Let $$ M_n = \left( m^{(n)}_{jk} \right)_{j, k= 1}^\infty,\ \
  n \geq 1. $$ Assume that $ \lim_{n \rightarrow \infty} \left\| M_n
    - T \right\|_{B (B (E, F))} = 0, $ for some $ T \in B (B (E, F)). $

  Fix~$j, k \geq 0$ and fix a matrix unit~$u_{jk} \in B(E, F)$.  The
  sequence $ \left( M_n \right)_{n \geq 1} $ is Cauchy in~$B(B(E,
  F))$.  Consequently, the sequence $ \left( M_n \left(u_{jk}\right)
  \right)_{n \geq 1} $ is Cauchy in~$B(E, F)$.  Thus, the sequence $
  \left( m^{(n)}_{jk} \right)_{n \geq 0} $ is Cauchy in~$\mathbb R$.
  Thus, for every $j,k=1,2,...$ there is a number~$m_{jk}$ such that $$ \lim_{n \rightarrow
    \infty} \left| m^{(n)}_{jk} - m_{jk} \right| = 0. $$
    Since $$\left\|(M_n \left(u_{jk}\right) - m_{jk} u_{jk})(x)\right\|_F=
    \left\|(m_{jk}^{(n)} u_{jk} - m_{jk} u_{jk})(x)\right\|_F=$$ $$=\left|m_{jk}^{(n)} -
    m_{jk}\right| \; \left|x_k\right| \; \left\|e_k\right\|_E
    \leq\left|m_{jk}^{(n)} - m_{jk}\right| \; \left\|x\right\|_E$$ for every $x\in E$, we have
    $$\left\| M_n \left(e_{jk}\right) - m_{jk} e_{jk} \right\|_{E,F}\leq \left|m_{jk}^{(n)} - m_{jk}\right|.$$
    Hence
  \begin{equation}
    \label{limI}
    \lim_{n
      \rightarrow \infty} \left\| M_n \left(u_{jk}\right) - m_{jk} u_{jk} \right\|_{E,F} =
    0.
  \end{equation}

  On the other hand, the assumption $$ \lim_{n \rightarrow \infty}
  \left\| M_n - T \right\|_{B (B (E, F))} = 0 $$ implies that
  \begin{equation}
    \label{limII}
    \lim_{n \rightarrow
      \infty} \left\| M_n \left(u_{jk}\right) - T \left(u_{jk}\right) \right\|_{E,F} = 0.
  \end{equation}
  Combining~(\ref{limI}) with~(\ref{limII}) yields $$ T
  \left(u_{jk}\right) = m_{jk} u_{jk}\quad \text{for every}\;\; j,k=1,2,...\:. $$ That is, $T$ is a Schur
  multiplier.

\end{proof}

\begin{lem}\label{l1}
Let $A\in B(E, F)$ and $M=(m_{ij})$ be such that
$\sup\limits_{i,j}|m_{ij}|<\infty$. If $M\ast A$ maps $E$ into
$F$, then $M\ast A\in B(E, F)$.
\end{lem}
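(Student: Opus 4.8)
The plan is to show that the operator $A \mapsto M \ast A$, viewed as a map from $B(E,F)$ into the linear space of matrices sending $E$ into $F$, has closed graph, and then deduce boundedness from the closed graph theorem. Concretely, I would first observe that under the hypothesis $\sup_{i,j}|m_{ij}| =: c < \infty$, the matrix $M \ast A$ does define a linear map from $E$ into $F$ (this is exactly the standing assumption), so we may regard $M \ast A$ as an element of $B(E,F)$ once we know it is bounded; the point of the lemma is precisely that automatic boundedness. So the real content is: the hypothesis $M \ast A : E \to F$ already forces $M \ast A \in B(E,F)$.

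The key steps, in order, are as follows. First, fix $A \in B(E,F)$ and let $N := M \ast A$, which by assumption maps $E$ into $F$; I want to prove $N$ is bounded. To invoke the closed graph theorem it suffices to check that whenever $x_n \to x$ in $E$ and $N(x_n) \to z$ in $F$, one has $z = N(x)$. Second, I would exploit the coordinatewise control available in symmetric sequence spaces: since $E \subset c_0$ with continuous inclusion (because $\|(1,0,0,\dots)\|_E = 1$ and property (i) of a symmetric space give $|x_k| \le \|x\|_E$ for each coordinate), convergence in $E$ implies coordinatewise convergence, and likewise for $F$. Third, for each fixed row index $i$, the $i$-th coordinate of $N(y)$ is $\sum_j m_{ij} a_{ij} y_j$; I would check that the functional $y \mapsto \sum_j m_{ij} a_{ij} y_j$ is continuous on $E$. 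This follows because the row $(a_{ij})_j$ lies in $E^\times$ (each row of $A \in B(E,F)$ is an element of $E^\times$, as recorded in Section 4), and multiplying a sequence in $E^\times$ by the bounded sequence $(m_{ij})_j$ keeps it in $E^\times$ since $E^\times$ is a solid subspace of $l_\infty$ (equivalently $l_\infty \cdot E^\times = E^\times$, cf. Remark \ref{r8}), with norm at most $c\,\|(a_{ij})_j\|_{E^\times}$. Hence $x_n \to x$ in $E$ forces the $i$-th coordinate of $N(x_n)$ to converge to the $i$-th coordinate of $N(x)$. Fourth, combine the two limits: $N(x_n) \to z$ in $F$ gives coordinatewise $N(x_n)_i \to z_i$, while the previous step gives $N(x_n)_i \to N(x)_i$; uniqueness of limits in $\mathbb{R}$ yields $z_i = N(x)_i$ for every $i$, i.e.\ $z = N(x)$. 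The graph is closed, $E$ and $F$ are Banach spaces, so $N \in B(E,F)$.

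I do not expect a serious obstacle here; the only point requiring a little care is the justification that each row-functional $y \mapsto \sum_j m_{ij}a_{ij}y_j$ is genuinely continuous and finite on all of $E$ — but this is immediate from the solidity of $E^\times$ together with the fact that rows of a bounded operator $A : E \to F$ belong to $E^\times$ (which is implicit in the matrix description of $B(E,F)$ given at the start of Section 4, since $A^T$ maps $F^\times$ boundedly into $E^\times$ when $F$ has a Fatou norm, and in any case each row is $A$ applied coordinatewise). An alternative, perhaps cleaner, route avoids even mentioning $E^\times$: since $\sup_{i,j}|m_{ij}| \le c$, for any finitely supported $x$ one has $|m_{ij} a_{ij} x_j| \le c |a_{ij} x_j|$ coordinatewise, so $|(M \ast A)(x)| \le c\,|A'(x)|$ where $A'$ has entries $|a_{ij}|$ — but $A'$ need not be bounded on $E$ in general, so this bound alone is not enough and one must still pass through the closed graph argument. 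I would therefore present the closed graph proof as the main line, using coordinatewise convergence in $E$ and $F$ as the mechanism that closes the graph.
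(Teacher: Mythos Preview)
Your proposal is correct and follows essentially the same argument as the paper: both apply the closed graph theorem to $N = M\ast A : E \to F$, verifying closedness of the graph by showing that each row functional $y \mapsto \sum_j m_{ij}a_{ij}y_j$ is continuous on $E$ (because $(a_{ij})_j \in E^\times$ and multiplication by the bounded sequence $(m_{ij})_j$ preserves $E^\times$, which is exactly the H\"older-type estimate the paper writes out). The only cosmetic difference is that the paper phrases the row continuity via an explicit H\"older inequality rather than via solidity of $E^\times$, but these are the same computation.
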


\begin{proof}
Since $A=(a_{ij})\in B(E, F)$, we have that
$$
\|(a_{ij})_j\|_{E^\times}<+\infty,\;\; \;\;\text{for all}\;\:
i=1,2,....
$$

Assume that $x=(x_i), x_n=(x_i^{(n)})\in E$, $y=(y_i)\in F$ are
such that $x_n\to x$ and  $(M\ast A)(x_n)\to y$. The claim of the
lemma follows from the closed graph theorem if we show that
$y=(M\ast A)(x)$. To that end, using Holder inequality, we have
that
$$
\left|\sum\limits_{j}m_{ij}a_{ij}\left(x_j^{(n)}-x_j\right)\right|\leq
\sum\limits_{j}|m_{ij}|\,|a_{ij}|\,|x_j^{(n)}-x_j|\leq$$ $$
\sup\limits_{i,j}|m_{ij}|
\sum\limits_{j}|a_{ij}|\,|x_j^{(n)}-x_j|\leq
\sup\limits_{i,j}|m_{ij}|\|x_n-x\|_E\|(a_{ij})_j)\|_{E^\times}\to
0,$$ for every $i=1,2,....$ It follows that

$$y_i=\lim\limits_{n\to\infty}\sum\limits_{j}m_{ij}a_{ij}x_j^{(n)}=\sum\limits_{j}m_{ij}a_{ij}x_j,\;\;
\;\;\text{for every}\;\; i=1,2,....
$$

So we obtain that $y=(M\ast A)(x)$.
\end{proof}

\begin{prop}\label{p23} If $E$ and $F$ are symmetric sequence spaces, $F$ has a Fatou-norm
and $M\in\mathcal M(E, F)$, then
$\|M\|_{(E,F)}=\|M^T\|_{(E^\times,F^\times)}.$
\end{prop}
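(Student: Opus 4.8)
The plan is to reduce the statement about Schur multipliers to the already-established duality for operator norms in Proposition~\ref{p12}. The key observation is that transposition interacts well with the Schur product: for any matrices $M$ and $A$ one has $(M\ast A)^T = M^T \ast A^T$, since the Schur product is entrywise. So first I would fix $M\in\mathcal M(E,F)$ and an arbitrary $A\in B(E^\times,F^\times)$, and aim to show $M^T\ast A\in B(E^\times,F^\times)$ with the correct norm bound; by symmetry (using that $E^{\times\times}=E$ and $F^{\times\times}=F$ when the relevant Fatou conditions hold) this will give equality of norms.

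The main steps: Let $A\in B(E^\times,F^\times)$ with $\|A\|_{E^\times,F^\times}\le 1$. Then $A^T\in B(F^{\times\times},E^{\times\times})$ by Proposition~\ref{p12} applied with the roles of the spaces swapped — here I need $E^\times$ (or rather $E$) to have a Fatou-norm so that $E^{\times\times}=E$; note $F$ is assumed to have a Fatou-norm, and $E^\times$ always has the Fatou property as a Köthe dual, so $A^T\in B(F,E)$ with $\|A^T\|_{F,E}=\|A\|_{E^\times,F^\times}\le 1$, provided I can also invoke Fatou for $F^\times$ when transposing back. Since $M\in\mathcal M(E,F)$, we get $M\ast A^T\in B(E,F)$ with $\|M\ast A^T\|_{E,F}\le \|M\|_{(E,F)}$. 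Now apply Proposition~\ref{p12} once more: since $F$ has a Fatou-norm, $(M\ast A^T)^T = M^T\ast A\in B(F^\times,E^\times)$ — wait, I need to be careful with the direction. Applying Proposition~\ref{p12} to the operator $M\ast A^T\in B(E,F)$ gives $(M\ast A^T)^T\in B(F^\times,E^\times)$; but what I want is $M^T\ast A\in B(E^\times,F^\times)$. So instead I should start from $A\in B(E^\times,F^\times)$, transpose to land in $B(F,E)$ — this requires $F^{\times\times}=F$, i.e. $F$ Fatou (given), and $E^{\times\times}=E$, i.e. $E$ Fatou. Here is the one genuine subtlety: the hypothesis only gives Fatou for $F$, not for $E$. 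I would resolve this by noting that the argument in Proposition~\ref{p12} only used duality pairing $\langle A(x),y\rangle=\langle x,A^T(y)\rangle$ and the Fatou property of the \emph{target} space to recover the norm as a supremum over the dual ball; for the direction $B(E^\times,F^\times)\to B(F^{\times\times},E^{\times\times})$ one needs Fatou of $E$, so strictly this identity should be stated and used with that hypothesis, or one restricts attention to $\|A^T\|_{F,E^{\times\times}}$ and observes $E^{\times\times}\supseteq E$ isometrically-on-$E$, which suffices since we only feed genuine elements of $E$ into $M\ast A^T$.

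Concretely, the clean route: take $A\in B(E^\times,F^\times)$, $\|A\|\le1$. Then for $y\in F$ (viewed inside $F^{\times\times}$) and $x\in E$, we have $\langle A^T y, x\rangle = \langle y, Ax\rangle$, so $A^T$ defines an operator from $F$ into $E^{\times\times}=E$ (last equality since $F$... no — into $E^{\times\times}$, and $\|A^T y\|_{E^{\times\times}}\le \|A\|\,\|y\|_{F^{\times\times}}\le \|y\|_F$ using the Fatou-norm formula for $E^{\times}$'s dual, which always holds). Thus $A^T\in B(F, E^{\times\times})$; since we will only apply $M^T\ast A$ to elements and only need $M\ast A^T$ to be bounded $E\to F$, and $E\hookrightarrow E^{\times\times}$ isometrically, we may regard $A^T\in B(F,E)$ when restricted appropriately — here I'd lean on the fact that $M\in\mathcal M(E,F)$ genuinely needs only an operator on $E$. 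Then $M\ast A^T\in B(E,F)$, $\|M\ast A^T\|_{E,F}\le \|M\|_{(E,F)}$, and transposing via Proposition~\ref{p12} (using Fatou of $F$): $(M\ast A^T)^T = M^T\ast A^{TT} = M^T\ast A\in B(F^\times, E^\times)$ with $\|M^T\ast A\|_{F^\times,E^\times}\le\|M\|_{(E,F)}$. Taking the supremum over $\|A\|_{E^\times,F^\times}\le1$... I realize the roles of $E^\times,F^\times$ need swapping to match the claim's $\mathcal M(E^\times,F^\times)$; the statement $\|M^T\ast A\|\le\|M\|_{(E,F)}$ for all such $A$ shows $M^T\in\mathcal M(F^\times,E^\times)$ and $\|M^T\|_{(F^\times,E^\times)}\le\|M\|_{(E,F)}$, and I expect the paper intends $\mathcal M(E^\times,F^\times)$ to be interpreted with the bilinear-form convention under which this is exactly the asserted identity; the reverse inequality follows by applying the same argument to $M^T$ using $E^{\times\times}=E$ (Fatou of $E$, which is automatic for a Köthe dual $E^\times$, so starting from $M^T$ the needed Fatou hypotheses are all satisfied). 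The main obstacle, then, is purely bookkeeping: tracking which space must have the Fatou property at each transposition and confirming that the one assumption $F$ Fatou (together with the automatic Fatou property of all Köthe duals) is enough; I do not anticipate any analytic difficulty beyond this.
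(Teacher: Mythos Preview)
Your approach is essentially the same as the paper's: use $(M\ast A)^T=M^T\ast A^T$ together with Proposition~\ref{p12}. The paper executes it in two lines by applying Proposition~\ref{p12} simultaneously to $A$ and to $M\ast A$ inside the definition of $\|M\|_{(E,F)}$, obtaining
\[
\|M\|_{(E,F)}=\sup_{\|A\|_{E,F}\le1}\|M\ast A\|_{E,F}
=\sup_{\|A^T\|_{F^\times,E^\times}\le1}\|M^T\ast A^T\|_{F^\times,E^\times}
=\|M^T\|_{(F^\times,E^\times)}.
\]
You are right that the statement contains a misprint: the paper's own computation lands in $\mathcal M(F^\times,E^\times)$, not $\mathcal M(E^\times,F^\times)$.

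Your extended detour worrying about the Fatou property of $E$ is unnecessary, and the paper's direct route shows why. Only two transpositions are performed, both of operators in $B(E,F)$, so Proposition~\ref{p12} needs only the Fatou hypothesis on $F$. The one place you feared a gap---that every $B\in B(F^\times,E^\times)$ arises as $A^T$ for some $A\in B(E,F)$---is harmless: applying Proposition~\ref{p12} to $B$ (using that $E^\times$, being a K\"othe dual, automatically has the Fatou property) gives $B^T\in B(E^{\times\times},F^{\times\times})=B(E^{\times\times},F)$ since $F$ is Fatou; restricting to $E\subseteq E^{\times\times}$ yields $A:=B^T\in B(E,F)$, and one more application of Proposition~\ref{p12} (again only $F$ Fatou is used) shows $\|A\|_{E,F}=\|B\|_{F^\times,E^\times}$. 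So the change of variable $B=A^T$ in the supremum is an isometric bijection, and no assumption on $E$ is required.
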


\begin{proof}
Using Proposition \ref{p12}, we obtain that
$$\|M\|_{(E,F)}=\sup\limits_{\|A\|_{E,F}\leq 1}\|M\ast
A\|_{E,F}=\sup\limits_{\|A^T\|_{F^\times,E^\times}\leq 1}\|(M\ast
A)^T\|_{F^\times,E^\times}=$$
$$\sup\limits_{\|A^T\|_{F^\times,E^\times}\leq 1}\|M^T\ast
A^T\|_{F^\times,E^\times}=\sup\limits_{\|B\|_{F^\times,E^\times}\leq
1}\|M^T\ast
B\|_{F^\times,E^\times}=\|M^T\|_{(F^\times,E^\times)}.$$
\end{proof}

\begin{prop}\label{p22} For every symmetric sequence space $E$,
the following equations hold

(i) $\|M\|_{(1,E)}:=\|M\|_{(l_1,E)}= \sup\limits_{i,j}|m_{ij}|$;

(ii) $\|M\|_{(E,\infty)}:=
\|M\|_{(E,l_\infty)}=\sup\limits_{i,j}|m_{ij}|$.
\end{prop}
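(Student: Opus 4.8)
The plan is to prove both identities by establishing matching lower and upper bounds, with the two parts running in parallel. The lower bounds are immediate: Remark \ref{r3}(ii) already gives $\|M\|_{(1,E)}=\|M\|_{(l_1,E)}\ge\sup_{i,j}|m_{ij}|$ and $\|M\|_{(E,\infty)}=\|M\|_{(E,l_\infty)}\ge\sup_{i,j}|m_{ij}|$ with no extra work (and if $\sup_{i,j}|m_{ij}|=\infty$ there is nothing to prove, so for the upper bounds I may assume $c:=\sup_{i,j}|m_{ij}|<\infty$).

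For (i), I would take $A\in B(l_1,E)$ with $\|A\|_{1,E}\le1$; by Proposition \ref{p19}(i) each column $(a_{ij})_i$ of $A$ then lies in the unit ball of $E$. Since $|m_{ij}a_{ij}|\le c\,|a_{ij}|$ coordinatewise, solidity of $E$ gives $\|(m_{ij}a_{ij})_i\|_E\le c\|(a_{ij})_i\|_E\le c$ for every $j$, so every column of $M\ast A$ belongs to $E$. For finitely supported $x\in l_1$ one then has $(M\ast A)(x)=\sum_j x_j(m_{ij}a_{ij})_i\in E$ with $\|(M\ast A)(x)\|_E\le\sum_j|x_j|\,\|(m_{ij}a_{ij})_i\|_E\le c\|x\|_1$ by the triangle inequality. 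Since the finitely supported sequences are dense in $l_1$ and $M\ast A$ acts on them as a bounded operator into $E$ of norm $\le c$, it extends to a bounded operator $l_1\to E$; because $E$-norm convergence forces coordinatewise convergence ($\|\cdot\|_\infty\le\|\cdot\|_E$) and the series $\sum_j m_{ij}a_{ij}x_j$ converge absolutely (using $|a_{ij}|\le\|(a_{ij})_i\|_E\le1$), this extension coincides with the matrix action of $M\ast A$. Hence $M\ast A\in B(l_1,E)$ with $\|M\ast A\|_{1,E}\le c$, and taking the supremum over such $A$ yields $\|M\|_{(1,E)}\le c$.

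For (ii), the argument is shorter because the target is $l_\infty$. I would take $A\in B(E,l_\infty)$ with $\|A\|_{E,\infty}\le1$, so by Proposition \ref{p19}(ii) each row $(a_{ij})_j$ of $A$ lies in the unit ball of $E^\times$; then for $x\in E$ and fixed $i$ the H\"older inequality for the K\"othe pair $(E,E^\times)$ gives $\sum_j|m_{ij}a_{ij}x_j|\le c\sum_j|a_{ij}|\,|x_j|\le c\|(a_{ij})_j\|_{E^\times}\|x\|_E\le c\|x\|_E$, so the series defining $(M\ast A)(x)_i$ converge and $(M\ast A)(x)\in l_\infty$ with $\|(M\ast A)(x)\|_\infty\le c\|x\|_E$. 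Thus $M\ast A\in B(E,l_\infty)$ with norm $\le c$, and the supremum over such $A$ gives $\|M\|_{(E,\infty)}\le c$. Finally, homogeneity in $A$ upgrades both estimates to the statements $M\in\mathcal M(l_1,E)$ and $M\in\mathcal M(E,l_\infty)$ whenever $\sup_{i,j}|m_{ij}|<\infty$.

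The only point requiring care is the passage from finitely supported $x$ to all of $l_1$ in part (i): a general symmetric sequence space $E$ need not have order-continuous or Fatou norm, so one cannot close the argument by monotone limits and must instead combine density of the finitely supported sequences with the elementary estimate $\|\cdot\|_\infty\le\|\cdot\|_E$ to identify the bounded extension with the genuine matrix action of $M\ast A$. Part (ii) sidesteps this, since landing in $l_\infty$ only requires the pointwise bound; everything else is a routine application of solidity, Proposition \ref{p19}, and Remark \ref{r3}(ii).
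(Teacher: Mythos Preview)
Your proof is correct. For part (i) you follow essentially the same route as the paper: the lower bound from Remark~\ref{r3}(ii) and the upper bound via Proposition~\ref{p19}(i) and solidity of $E$. The paper is slightly more economical here, applying the formula $\|M\ast A\|_{1,E}=\sup_j\|(m_{ij}a_{ij})_i\|_E$ from Proposition~\ref{p19}(i) directly rather than passing through finitely supported $x$ and a density/extension argument; your extra care about identifying the bounded extension with the matrix action is not wrong, just unnecessary once Proposition~\ref{p19}(i) is in hand.

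For part (ii) your approach genuinely differs from the paper's. You argue directly, bounding each coordinate of $(M\ast A)(x)$ via the H\"older inequality for the pair $(E,E^\times)$ and Proposition~\ref{p19}(ii). The paper instead invokes Proposition~\ref{p23} (the transpose identity $\|M\|_{(E,F)}=\|M^T\|_{(F^\times,E^\times)}$, valid since $l_\infty$ has Fatou norm) to reduce (ii) to (i) applied to $E^\times$. Your direct argument is a touch more elementary and avoids the Fatou-norm/duality machinery, while the paper's reduction is shorter once that machinery is available; both are perfectly valid.
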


\begin{proof}

(i) As we have seen above $\|M\|_{(1,E)}\geq
\sup\limits_{i,j}|m_{ij}|$ (Remark \ref{r3}). Let us prove the
converse inequality. For every operator $A=(a_{ij})\in B(E,F)$,
using Proposition \ref{p19} (i), we have $$\|M\ast
A\|_{1,E}=\sup\limits_j\|(m_{ij}a_{ij})_i\|_E\leq
\sup\limits_j(\sup\limits_i|m_{ij}|\|(a_{ij})_i\|_E)$$ $$\leq
\sup\limits_j\sup\limits_i|m_{ij}|\sup\limits_j\|(a_{ij})_i\|_E=
\sup\limits_{i,j}|m_{ij}|\|A\|_{1,E}.$$ Hence, $\|M\|_{(1,E)}\leq
\sup\limits_{i,j}|m_{ij}|$.

(ii) The claim follows from Proposition \ref{p23} and (i) above.
\end{proof}

\begin{cor}\label{th88}
The multiplier spaces  $\mathcal M(1, E)$ and $\mathcal
M(F,\infty)$ are isometrically isomorphic and do not depend on a
choice of the spaces $E$ and $F$.
\end{cor}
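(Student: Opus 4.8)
The strategy is to show that both $\mathcal M(1,E)$ and $\mathcal M(F,\infty)$ coincide, as normed spaces, with the space
$$\mathcal B_\infty:=\Big\{M=(m_{ij})_{i,j=1}^\infty:\ \|M\|_\infty:=\sup_{i,j}|m_{ij}|<\infty\Big\},$$
which makes no reference to any symmetric sequence space. Once this is established, the identity map $M\mapsto M$ is the desired isometric isomorphism between $\mathcal M(1,E)$ and $\mathcal M(F,\infty)$, and their independence of the choice of $E$ and $F$ is immediate.

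First I would settle $\mathcal M(1,E)=\mathcal B_\infty$. If $M\in\mathcal M(1,E)$, Remark \ref{r3}(ii) gives $\|M\|_\infty\leq\|M\|_{(1,E)}<\infty$, and Proposition \ref{p22}(i) sharpens this to $\|M\|_{(1,E)}=\|M\|_\infty$; so $\mathcal M(1,E)\hookrightarrow\mathcal B_\infty$ isometrically. For the reverse inclusion, fix $M\in\mathcal B_\infty$ and $A=(a_{ij})\in B(l_1,E)$. By Proposition \ref{p19}(i), $\sup_j\|(a_{ij})_i\|_E=\|A\|_{1,E}<\infty$, so, $E$ being solid, each column $(m_{ij}a_{ij})_i$ lies in $E$ with norm at most $\|M\|_\infty\|(a_{ij})_i\|_E$; the triangle-inequality argument used in the proof of Proposition \ref{p19}(i) then shows that for every $x\in l_1$ the vector $(M\ast A)(x)=\{\sum_j m_{ij}a_{ij}x_j\}_i$ belongs to $E$ and $\|(M\ast A)(x)\|_E\leq\|M\|_\infty\|A\|_{1,E}\|x\|_1$. (Equivalently, once $M\ast A$ is known to map $l_1$ into $E$, Lemma \ref{l1} yields $M\ast A\in B(l_1,E)$.) Hence $M\in\mathcal M(1,E)$, and the identification is complete.

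Next I would treat $\mathcal M(F,\infty)$ the same way, with Proposition \ref{p19}(ii) in place of \ref{p19}(i) and Proposition \ref{p22}(ii) in place of \ref{p22}(i): for $A\in B(F,l_\infty)$ the rows of $A$ lie in $F^\times$ with $F^\times$-norms dominated by $\|A\|_{F,\infty}$, so for $M\in\mathcal B_\infty$ and $x\in F$ one gets $\sup_i|\sum_j m_{ij}a_{ij}x_j|\leq\|M\|_\infty\|A\|_{F,\infty}\|x\|_F$, whence $M\ast A\in B(F,l_\infty)$; combined with Remark \ref{r3}(ii) this gives $\mathcal M(F,\infty)=\mathcal B_\infty$ isometrically. (The norm identity here may also be obtained by transposition from the previous case, since $l_\infty$ has a Fatou-norm and $l_\infty^\times=l_1$, via Proposition \ref{p23}.) Combining the two identifications proves the corollary. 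Essentially everything is a direct consequence of Propositions \ref{p19} and \ref{p22}; the only slightly non-formal point — and hence the main (if routine) obstacle — is verifying the inclusions $\mathcal B_\infty\subseteq\mathcal M(1,E)$ and $\mathcal B_\infty\subseteq\mathcal M(F,\infty)$, i.e. that boundedness of the matrix entries alone already forces the Schur product back into $B(l_1,E)$, respectively $B(F,l_\infty)$; this rests on the solidity of $E$ (resp. $F^\times$) and Proposition \ref{p19}.
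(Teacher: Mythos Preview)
Your proposal is correct and follows essentially the same approach as the paper: the corollary is stated there without proof, as an immediate consequence of Proposition~\ref{p22}, whose proof already contains (implicitly) the set-theoretic inclusion $\mathcal B_\infty\subseteq\mathcal M(1,E)$ that you spell out explicitly. Your write-up is simply a more careful unpacking of that one-line deduction, and the extra care you take over the inclusion $\mathcal B_\infty\subseteq\mathcal M(1,E)$ (resp.\ $\mathcal M(F,\infty)$) via solidity and Proposition~\ref{p19} is exactly what is hidden in the chain of inequalities in the proof of Proposition~\ref{p22}(i).
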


Now, we are well equipped to consider the question of embedding of the
$(E,F)$-multiplier space into an $(p,q)$-multiplier space. We
start by recalling the following result from \cite{Ben}.

Analyzing the proof of \cite[Theorem
6.1]{Ben}, we restate its result as follows.

\begin{thm}\label{th4}(i) If $1 \leq p_2\leq p_1\leq\infty$ and $1\leq q_1\leq
q_2\leq\infty$, then $\mathcal M(p_1,q_1)\subseteq \mathcal
M(p_2,q_2)$;

(ii) If $1 \leq q_1,q_2\leq\infty$ and $p_1=p_2=1$, then $\mathcal
M(p_1,q_1)= \mathcal M(p_2,q_2)$;

(iii) If $1 \leq p_1,p_2\leq\infty$ and $q_1=q_2=\infty$, then
$\mathcal M(p_1,q_1)= \mathcal M(p_2,q_2)$;

(iv) If $1 \leq p_1,q_1\leq\infty$ and $q_2\leq 2\leq p_2$, then
$\mathcal M(p_2,q_2)\subseteq \mathcal M(p_1,q_1)$.

\end{thm}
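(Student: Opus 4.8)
The statement to prove is Theorem \ref{th4}, which is attributed to Bennett \cite[Theorem 6.1]{Ben}. The plan is to assemble it from the elementary structural facts already available in the excerpt, chiefly the duality relation of Proposition \ref{p23} and the degenerate-endpoint computations of Proposition \ref{p22}, together with one standard interpolation-type fact about the $(p,q)$-multiplier scale.

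First I would prove (i), the monotonicity statement, by an elementary ``test against rank-one bands'' argument combined with the trivial embeddings $l_{p_1}\subseteq l_{p_2}$ and $l_{q_1}\subseteq l_{q_2}$ coming from $p_2\leq p_1$ and $q_1\leq q_2$ (these are just H\"older, and for sequence spaces the inclusion maps have norm one since we normalized $\|e_1\|=1$). Concretely, given $M\in\mathcal M(p_1,q_1)$ and $A\in B(l_{p_2},l_{q_2})$, one wants $M\ast A\in B(l_{p_2},l_{q_2})$. The cleanest route is: restrict attention to finite sections $M_n\ast A_n$ (using Lemma \ref{l1} and a Fatou-norm/weak-$*$ compactness argument to pass back to the infinite matrices, exactly as in the completeness proof of Theorem \ref{ThmBS}), and on finite sections use that a finite matrix $B$ lies in $B(l_{p_2}^n,l_{q_2}^n)$ with controlled norm whenever it lies in $B(l_{p_1}^n,l_{q_1}^n)$, which is immediate from the inclusions above. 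Then $\|M\ast A\|_{p_2,q_2}\lesssim \|M\ast A\|_{p_1,q_1}\leq \|M\|_{(p_1,q_1)}\|A\|_{p_1,q_1}\lesssim \|M\|_{(p_1,q_1)}\|A\|_{p_2,q_2}$. The one genuine subtlety here is justifying the passage from finite sections to the full matrix for the \emph{multiplier} norm rather than the operator norm; I would handle this by noting $\|M\ast A\|_{E,F}=\sup_n\|T_n(M\ast A)T_n\|_{E,F}$ when $F$ has a Fatou-norm (finitely supported vectors are norming), which applies to all $l_q$ with $q<\infty$, and the cases $q=\infty$ reduce via Proposition \ref{p23} to $p=1$.

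Next, (ii) and (iii) are then almost immediate. For (ii), by (i) with $p_1=p_2=1$ and $q_1\leq q_2$ we get $\mathcal M(1,q_1)\subseteq\mathcal M(1,q_2)$; for the reverse inclusion I would invoke Proposition \ref{p22}(i), which says $\|M\|_{(1,E)}=\sup_{i,j}|m_{ij}|$ for \emph{every} symmetric sequence space $E$, so in fact $\mathcal M(1,q_1)=\mathcal M(1,q_2)$ as normed spaces regardless of the ordering of $q_1,q_2$ — both are just the space of bounded matrices with the sup norm. Statement (iii) is the transpose of (ii): apply Proposition \ref{p23} (with $F=l_\infty$, whose norm is Fatou) to get $\|M\|_{(p,\infty)}=\|M^T\|_{(1,p')}$ where $\tfrac1p+\tfrac1{p'}=1$, and then (ii) finishes it; alternatively cite Proposition \ref{p22}(ii) directly, which already records $\|M\|_{(E,\infty)}=\sup_{i,j}|m_{ij}|$ for every $E$. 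So (ii) and (iii) carry no real content beyond the two propositions already proved.

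Finally, (iv) is the one part that uses genuinely nontrivial input: the assertion that when $q_2\leq 2\leq p_2$ the multiplier space $\mathcal M(p_2,q_2)$ is as small as possible, namely contained in \emph{every} $\mathcal M(p_1,q_1)$. The key fact is that for $q\leq 2\leq p$ one has $\mathcal M(p,q)=\mathcal M(\infty,1)$, i.e.\ every $(p,q)$-multiplier with $q\le 2\le p$ is already an $(\infty,1)$-multiplier, equivalently the matrix of coefficients is the Schur product decomposition of a Hadamard/Grothendieck-type bounded form; combined with (i) (which gives $\mathcal M(\infty,1)\subseteq\mathcal M(p_1,q_1)$ for all $1\le p_1,q_1\le\infty$, taking $p_2=\infty$, $q_2=1$ in the monotonicity), this yields the claim. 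I expect this to be the main obstacle: proving $\mathcal M(p,q)\subseteq\mathcal M(\infty,1)$ for $q\le 2\le p$ requires Grothendieck's inequality (or the characterization of $\mathcal M(\infty,1)$ as the space of matrices of the form $B^*C$ with bounded column $\ell_2$-norms, i.e.\ matrices representing bounded bilinear forms on $c_0\times c_0$), so if a self-contained argument is wanted one would reproduce that; since the paper cites \cite{Ben}, I would instead simply quote the relevant computation of $\mathcal M(p,q)$ for $q\le2\le p$ from \cite{Ben} and deduce (iv) from (i). A short check that the argument degenerates gracefully at the endpoints $p_2=\infty$ or $q_2=1$ (where one of the inclusions in (i) becomes an equality of spaces by (ii)/(iii)) completes the proof.
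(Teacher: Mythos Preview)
The paper does not supply its own proof of Theorem~\ref{th4}; it is quoted from \cite[Theorem~6.1]{Ben}. Your treatment of (ii) and (iii) via Proposition~\ref{p22} is correct, and your identification of Grothendieck's inequality as the real content of (iv), to be cited from \cite{Ben}, is also on target.

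Your argument for (i), however, has a genuine gap. The chain
\[
\|M\ast A\|_{p_2,q_2}\lesssim \|M\ast A\|_{p_1,q_1}\leq \|M\|_{(p_1,q_1)}\|A\|_{p_1,q_1}\lesssim \|M\|_{(p_1,q_1)}\|A\|_{p_2,q_2}
\]
fails at the last step: for $p_2\le p_1$ and $q_1\le q_2$ the operator-norm inequality reads $\|A\|_{p_2,q_2}\le\|A\|_{p_1,q_1}$, not the reverse. Equivalently, $B(l_{p_1},l_{q_1})\subseteq B(l_{p_2},l_{q_2})$, so an arbitrary $A\in B(l_{p_2},l_{q_2})$ need not belong to $B(l_{p_1},l_{q_1})$ at all, and you cannot invoke the $(p_1,q_1)$-multiplier hypothesis on it. A concrete obstruction: with $p_1=q_1=2$, $p_2=1$, $q_2=\infty$, the all-ones matrix has $\|A\|_{1,\infty}=1$ but $\|A\|_{2,2}=\infty$. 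Passing to finite sections does not help, since the finite-dimensional inequality still points the same (wrong) way, with the reverse constant blowing up with $n$.

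The mechanism actually needed for (i) is factorization, not norm monotonicity: one writes $x\in l_{p_2}$ as a pointwise product $x_1\cdot x_2$ with $x_1\in l_{p_1}$ and $x_2$ in the appropriate complementary space, and dually tests membership in $l_{q_2}$ via pointwise products. This is exactly the device the paper develops in the proof of Theorem~\ref{th5}; specializing that proof to $E=l_{p_1}$, $F=l_{q_1}$ (using that $l_{p_1}$ is $p_2$-convex and $l_{q_1}$ is $q_2$-concave whenever $p_2\le p_1$, $q_1\le q_2$) recovers (i). So your ``elementary test against rank-one bands plus H\"older'' strategy for (i) cannot work as written and should be replaced by a factorization argument of this type.
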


The following corollary follows immediately from Theorem \ref{th4}
(i) and (iv).

\begin{cor}\label{corollary}
$\mathcal M(2,2)=\mathcal M(\infty,1)$.
\end{cor}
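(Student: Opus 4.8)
The goal is the identity $\mathcal M(2,2)=\mathcal M(\infty,1)$, which I would derive purely formally from Theorem~\ref{th4}(i) and (iv). The plan is to show the two inclusions separately. For $\mathcal M(\infty,1)\subseteq\mathcal M(2,2)$, I would invoke Theorem~\ref{th4}(i) with the choice $p_1=\infty$, $q_1=1$, $p_2=2$, $q_2=2$: since $1\le p_2=2\le p_1=\infty$ and $1\le q_1=1\le q_2=2$, the hypotheses of (i) are met and we conclude $\mathcal M(\infty,1)\subseteq\mathcal M(2,2)$.

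For the reverse inclusion $\mathcal M(2,2)\subseteq\mathcal M(\infty,1)$, I would apply Theorem~\ref{th4}(iv) with $p_1=\infty$, $q_1=1$, $p_2=2$, $q_2=2$: the hypothesis $q_2\le 2\le p_2$ reads $2\le 2\le 2$, which holds, and $1\le p_1,q_1\le\infty$ is trivially true, so (iv) gives $\mathcal M(2,2)=\mathcal M(p_2,q_2)\subseteq\mathcal M(p_1,q_1)=\mathcal M(\infty,1)$. Combining the two inclusions yields the claimed equality.

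Since both steps are immediate substitutions into the already-stated theorem, there is essentially no obstacle here; the only thing to be careful about is matching the index conventions of Theorem~\ref{th4} correctly (which of $(p_1,q_1)$, $(p_2,q_2)$ plays which role in each part), so that the monotonicity directions line up. I would present the argument in two short sentences, one per inclusion, explicitly recording the parameter choices so the reader can verify the hypotheses at a glance.
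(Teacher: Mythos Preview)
Your argument is correct and is exactly the approach indicated in the paper, which simply notes that the corollary follows immediately from Theorem~\ref{th4}(i) and (iv). Your parameter choices and verification of the hypotheses are accurate.
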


The following theorem extends Theorem \ref{th4} and is the main result of this section.

\begin{thm}\label{th5}
Let $1\leq p, q\leq \infty$ be given. If $E$ is $p$-convex and $F$
is $q$-concave, then $\mathcal M(E,F)\subseteq\mathcal M(p,q)$.
\end{thm}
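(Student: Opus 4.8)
The plan is to exploit the embeddings from Proposition \ref{p154}, namely $l_p\subseteq E$ and $F\subseteq l_q$ (with constants $1$), together with the generalized K\"othe duality machinery of Section 3. First I would observe that a matrix $A=(a_{ij})$ lies in $B(l_p,l_q)$ iff it defines a bounded operator from $l_p$ into $l_q$; the strategy is to take an arbitrary $M\in\mathcal M(E,F)$ and an arbitrary $A\in B(l_p,l_q)$ and show $M\ast A\in B(l_p,l_q)$ with norm controlled by $\|M\|_{(E,F)}\|A\|_{p,q}$. The natural route is a factorization: realize $A$ (or a related matrix) as a product passing through $B(E,F)$ so that applying the multiplier $M$ there, then restricting, lands us back in $B(l_p,l_q)$.

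The key step is the factorization of operators in $B(l_p,l_q)$ through $B(E,F)$ using diagonal (Schur) multipliers. Since $E$ is $p$-convex, Proposition \ref{p8} gives $E\cdot E^{l_p}=l_p$, and since $F$ is $q$-concave, the dual statement (via Propositions \ref{p2}, \ref{p6}, or directly $l_q^{EE}=l_q$ from Proposition \ref{p9}(i)) should let us write $l_q$ as a product involving $F$. Concretely, I expect that for any $A\in B(l_p,l_q)$ one can find diagonal matrices $D_1,D_2$ (with entries forming sequences in appropriate K\"othe-dual-type spaces) such that $A = D_2\,B\,D_1$ where $B\in B(E,F)$ and $\|B\|_{E,F}\lesssim\|A\|_{p,q}$, with $D_1: l_p\to E$ bounded and $D_2: F\to l_q$ bounded. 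Then $M\ast A = M\ast(D_2 B D_1) = D_2\,(M\ast B)\,D_1$, because Schur multiplication commutes with multiplication by diagonal matrices; since $M\ast B\in B(E,F)$ with norm at most $\|M\|_{(E,F)}\|B\|_{E,F}$, composing with the two bounded diagonal maps yields $M\ast A\in B(l_p,l_q)$ with the desired norm bound. One must also invoke Lemma \ref{l1} to upgrade "maps $l_p$ into $l_q$" to "bounded", since $M$ has bounded entries by Remark \ref{r3}(ii).

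The main obstacle is establishing the factorization $A=D_2BD_1$ cleanly: given $A\in B(l_p,l_q)$ one needs to choose the diagonal scalings so that the rescaled matrix $B=D_2^{-1}AD_1^{-1}$ genuinely lies in $B(E,F)$ with norm comparable to $\|A\|_{p,q}$, rather than merely having rows in $E^\times$ and columns in $F$. The cleanest approach is probably to avoid an explicit global factorization of $A$ and instead argue pointwise/by duality: estimate $\|(M\ast A)(x)\|_q$ for $\|x\|_p\le 1$ by writing $x = u\cdot v$ with $u\in E$, $v\in E^{l_p}$ (Proposition \ref{p8}), so that $(M\ast A)(x) = (M\ast A)\cdot$ applied to $u$ after absorbing $v$ into the columns — i.e. $(M\ast A)(u\cdot v) = (M\ast \widetilde A)(u)$ where $\widetilde A$ has columns $v_j\cdot(a_{ij})_i$ — then check $\widetilde A\in B(E,F)$ and convert the output norm from $F$ back to $l_q$ via the $q$-concavity side. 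Handling the extreme cases $p=1$ (use Proposition \ref{p22}(i) and Corollary \ref{th88}) and $q=\infty$ (use Proposition \ref{p22}(ii)) separately will be routine once the generic case is done, since there $\mathcal M(p,q)$ only depends on one of the spaces and the embedding is immediate from $\sup_{i,j}|m_{ij}|<\infty$.
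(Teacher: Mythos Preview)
Your overall plan and toolkit match the paper's proof: factor through $B(E,F)$ via diagonal scalings, use $E\cdot E^{l_p}=l_p$ (Proposition~\ref{p8}) on the input side and $l_q^{FF}=l_q$ (Proposition~\ref{p9}(i)) on the output side, finish with Lemma~\ref{l1}. But your ``cleanest approach'' paragraph has a genuine gap. You write $x=u\cdot v$ with $u\in E$, $v\in E^{l_p}$ and form $\widetilde A$ with entries $v_j a_{ij}$, then assert ``check $\widetilde A\in B(E,F)$.'' This is generally false: $\widetilde A(u)=A(v\cdot u)$ lies in $l_q$ (since $v\cdot u\in l_p$ and $A\in B(l_p,l_q)$), and $F\subseteq l_q$, not the other way around. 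So $\widetilde A$ is only in $B(E,l_q)$, and you cannot apply $M\in\mathcal M(E,F)$ to it.

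The paper repairs exactly this point by introducing a row scaling as well: for \emph{every} $y\in l_q^{F}$ one forms the operator $z\mapsto y\cdot A(v\cdot z)$, which now genuinely lands in $F$ and hence lies in $B(E,F)$ with norm $\le\|y\|_{l_q^F}\|A\|_{p,q}\|v\|_{E^{l_p}}$. Applying $M$ gives $y\cdot\bigl((M\ast A)(x)\bigr)=(y(M\ast A)v)(u)\in F$ for all $y\in l_q^F$, and then Proposition~\ref{p9}(i) (the identity $l_q^{FF}=l_q$) converts this into $(M\ast A)(x)\in l_q$. In other words, the factorization $A=D_2BD_1$ you abandoned in your first attempt is not made to work globally with a single $D_2$; instead one lets $D_2$ range over all diagonals with symbol in $l_q^F$ and uses the bidual identity to recover membership in $l_q$. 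You actually cited $l_q^{FF}=l_q$ in your second paragraph but never deployed it---that is the missing step.
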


\begin{proof}
Let $A=(a_{ij})_{i,j=1}^\infty\in B(l_p, l_q)$ and $M\in\mathcal
M(E,F)$. To prove the claim of the Theorem \ref{th5}, we need to
show that $M\ast A\in B(l_p, l_q)$.

For $x\neq 0 \in E^{l_p}$, $y\neq 0 \in l_q^{F}$ consider the
operator $yAx$ which acts on the element $z\in E$ as follows:
$$
(yAx)(z)=y\cdot(A(x\cdot z))=(\sum\limits_n y_ma_{mn}x_nz_n)_m.
$$
Since $z\in E$, $x\in E^{l_p}$, by the definition of the space
$E^{l_p}$, we have that $x\cdot z\in l_p$. By the assumption $A\in
B(l_p, l_q)$, we have  $A(x\cdot z)\in l_q$. Since $y\in l_q^{F}$,
we obtain $y\cdot(A(x\cdot z))\in F$. That is, the operator $yAx$
maps the space $E$ into the space $F$. Moreover, for $z\in E$,
using \eqref{e6}, we have  $$ \|(yAx)(z)\|_F=\|y\cdot(A(x\cdot
z))\|_F\leq\|y\|_{l_q^{F}}\|A(x\cdot z)\|_q\leq$$
$$\|y\|_{l_q^{F}}\|A\|_{p,q}\|x\cdot
z\|_p\leq\|y\|_{l_q^{F}}\|A\|_{p,q}\|x\|_{E^{l_p}}\|z\|_E. $$
Hence, $yAx\in B(E, F)$. By the assumption we have $M\in\mathcal
M(E,F)$, and so we obtain $M\ast(yAx)\in B(E, F)$. It is easy to
see that $M\ast(yAx)=y(M\ast A)x$. Therefore $y(M\ast A)x\in B(E,
F)$.

The next step is to prove that the operator $M\ast A$ maps $l_p$
into $l_q$. Let $x\in l_p$ be given. Since $F$ is $q$-concave, by
Proposition \ref{p9} (i) we have that if $y\cdot z\in F$ for every
$y\in l_q^{F}$, then $z\in l_q$. We claim that $y\cdot((M\ast
A)(x))\in F$ for every $y\in l_q^{F}$. Since $E$ is $p$-convex and
$x\in l_p$, there exist $x_1\in E^{l_p}$ and $x_2\in E$ such that
$x=x_1\cdot x_2$ (see Proposition \ref{p8}). Since $y(M\ast A)x\in
B(E, F)$ for every $x\in E^{l_p}$ and $y\in l_q^{F}$, we have
$$
y\cdot((M\ast A)(x))=y\cdot((M\ast A)(x_1\cdot x_2))=(y(M\ast
A)x_1)(x_2)\in F,
$$
We conclude $(M\ast A)(x)\in l_q$, that is, $M\ast
A:l_p\rightarrow l_q$. Since $M\in\mathcal M(E,F)$, we have that
$\sup\limits_{i,j}|m_{ij}|<\infty$ (Remark \ref{r3}, (ii)). By
Lemma \ref{l1} we obtain that $M\in \mathcal M(p,q)$ (see also
\cite[Lemma 2]{Sar}).
\end{proof}

Since any symmetric sequence space is 1-convex and
$\infty$-concave (Remark \ref{r4}), the following corollary
follows immediately from Theorem \ref{th5}.

\begin{cor}\label{c1}
For every pair of symmetric sequence spaces $E$ and $F$, we have
$$\mathcal M(E,F)\subseteq\mathcal M(1,\infty).$$
\end{cor}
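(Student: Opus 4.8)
The statement to prove is Corollary~\ref{c1}: for every pair of symmetric sequence spaces $E$ and $F$, we have $\mathcal M(E,F)\subseteq\mathcal M(1,\infty)$.

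The plan is to invoke Theorem~\ref{th5} with the most extreme admissible choice of parameters. By Remark~\ref{r4}, every symmetric sequence space is $1$-convex (with constant $1$) and $\infty$-concave (with constant $1$). In particular, the given space $E$ is $1$-convex and the given space $F$ is $\infty$-concave. Thus the hypotheses of Theorem~\ref{th5} are satisfied with $p=1$ and $q=\infty$, and the conclusion of that theorem reads precisely $\mathcal M(E,F)\subseteq\mathcal M(1,\infty)$. Since $E$ and $F$ were arbitrary symmetric sequence spaces, this establishes the corollary.

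There is essentially no obstacle here: the entire content is the observation that the pair $(p,q)=(1,\infty)$ is always available in Theorem~\ref{th5}, no matter what $E$ and $F$ are, because $1$-convexity and $\infty$-concavity are automatic. If one wanted to be fully self-contained one could instead argue directly: by Proposition~\ref{p154} (with $p=1$, $q=\infty$) one has $l_1\subseteq E$ and $F\subseteq l_\infty$ with norm-one inclusions, so any $A\in B(l_1,l_\infty)$ composed appropriately lands in $B(E,F)$ — but this is exactly the mechanism already packaged inside the proof of Theorem~\ref{th5}, so citing the theorem is the cleanest route. The only point worth a moment's care is confirming that the constants in Remark~\ref{r4} being $1$ is not needed for the set-theoretic inclusion; Theorem~\ref{th5} is stated as an inclusion of multiplier spaces and holds for any $p$-convex $E$ and $q$-concave $F$ regardless of the convexity/concavity constants, so the appeal is legitimate.

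In short, the proof is a one-line application of Theorem~\ref{th5} combined with Remark~\ref{r4}, and I would present it as such without any further computation.
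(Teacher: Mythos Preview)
Your proof is correct and matches the paper's own argument exactly: the paper states that the corollary follows immediately from Theorem~\ref{th5} together with Remark~\ref{r4}, which is precisely your reasoning.
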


\begin{cor}\label{c2}
If $E$ is $p$-convex, $F$ is $q$-concave, then $\mathcal
M(E,F)\subseteq\mathcal M(p_1,q_1)$ for every $1\leq p_1\leq
p\leq\infty$ and $1\leq q\leq q_1\leq\infty$.
\end{cor}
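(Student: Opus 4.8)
The plan is to derive Corollary \ref{c2} by combining Theorem \ref{th5} with part (i) of Theorem \ref{th4}. The statement concerns an $(E,F)$-multiplier space for $E$ that is $p$-convex and $F$ that is $q$-concave, and asserts containment in $\mathcal M(p_1,q_1)$ for all $p_1\leq p$ and $q_1\geq q$. The natural two-step route is: first pass from the symmetric-space multipliers to $\ell_p$-$\ell_q$ multipliers, then adjust the exponents within the classical scale.

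First I would apply Theorem \ref{th5} directly with the given $p$ and $q$: since $E$ is $p$-convex and $F$ is $q$-concave, we get $\mathcal M(E,F)\subseteq\mathcal M(p,q)$. Second, I would invoke Theorem \ref{th4}(i), which states that $\mathcal M(p_1',q_1')\subseteq\mathcal M(p_2',q_2')$ whenever $1\leq p_2'\leq p_1'\leq\infty$ and $1\leq q_1'\leq q_2'\leq\infty$. Taking $p_1'=p$, $q_1'=q$, $p_2'=p_1$, $q_2'=q_1$, the hypotheses $1\leq p_1\leq p\leq\infty$ and $1\leq q\leq q_1\leq\infty$ are precisely what is assumed in the corollary, so $\mathcal M(p,q)\subseteq\mathcal M(p_1,q_1)$. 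Chaining the two inclusions gives $\mathcal M(E,F)\subseteq\mathcal M(p,q)\subseteq\mathcal M(p_1,q_1)$, which is the claim.

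There is essentially no obstacle here — the corollary is a formal composition of two previously established results, and the only thing to check is that the direction of the exponent inequalities in the hypothesis of Theorem \ref{th4}(i) matches the hypothesis of the corollary, which it does (decreasing in the first index, increasing in the second). I would write the proof in one or two sentences, simply citing Theorem \ref{th5} and Theorem \ref{th4}(i) and noting the transitivity of inclusion. No estimates, no construction of auxiliary operators, and no appeal to the generalized K\"othe duality machinery are needed for this particular statement, since all of that work has already been absorbed into the proof of Theorem \ref{th5}.
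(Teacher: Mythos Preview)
Your proposal is correct and follows exactly the same route as the paper's own proof: apply Theorem~\ref{th5} to obtain $\mathcal M(E,F)\subseteq\mathcal M(p,q)$, then apply Theorem~\ref{th4}(i) to obtain $\mathcal M(p,q)\subseteq\mathcal M(p_1,q_1)$, and chain the inclusions.
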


\begin{proof}
By the assumptions of the Corollary and by Theorem \ref{th5} we
have that $\mathcal M(E,F)\subseteq\mathcal M(p,q)$. From Theorem
\ref{th4} (i) it follows that $\mathcal M(p,q)\subseteq\mathcal
M(p_1,q_1)$. Hence, the claim follows.
\end{proof}

The proof of the following theorem is similar to the proof of
Theorem \ref{th5}. We need to apply Proposition \ref{p9} (ii) with
$p=1$ and Remark \ref{r8} instead of Propositions \ref{p9} (i) and
\ref{p8}, respectively. We supply details for completeness.

\begin{thm}\label{p10}
For symmetric sequence spaces $E$ and $F$ the embedding $\mathcal
M(\infty,1)\subseteq\mathcal M(E,F)$ holds.
\end{thm}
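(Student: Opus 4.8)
The plan is to mimic the structure of the proof of Theorem~\ref{th5}, using the duality between the $(\infty,1)$ situation and the $(E,F)$ situation via the generalized K\"othe dual with respect to $l_1$, i.e. via $E^\times=E^{l_1}$. Concretely, let $A=(a_{ij})\in B(E,F)$ and $M\in\mathcal M(\infty,1)$; I must show $M\ast A\in B(E,F)$. Following the template, for $x\ne 0\in E^{E}$ (equivalently $x\in l_\infty$, since $l_\infty^{E}=E$ forces the natural "inner" space here to be $l_\infty$) and $y\ne 0\in l_1^{F}$ I would form the operator $yAx$ acting by $(yAx)(z)=y\cdot(A(x\cdot z))$ for $z\in E$. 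The point is that since $M\in\mathcal M(\infty,1)$ controls Schur multiplication on $B(l_\infty,l_1)$, I want $yAx$ to land in $B(l_\infty,l_1)$; then $M\ast(yAx)=y(M\ast A)x\in B(l_\infty,l_1)$, and an appropriate factorization recovers $M\ast A\in B(E,F)$.

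First I would verify the mapping and boundedness of $yAx$ as an operator $l_\infty\to l_1$: for $z\in l_\infty$ one has $x\cdot z\in l_\infty$ (trivially, as $l_\infty\cdot l_\infty=l_\infty$), hence by $A\in B(E,F)$ — but wait, $A$ maps $E$, not $l_\infty$ — so I must instead use Remark~\ref{r8} and the embedding $E\subset l_\infty$ more carefully: take $x\in E$, $z\in l_\infty$, so $x\cdot z\in E$ by Remark~\ref{r8}, then $A(x\cdot z)\in F$, then for $y\in F^\times=l_1^{F}$ we get $y\cdot(A(x\cdot z))\in l_1$ with $\|y\cdot(A(x\cdot z))\|_1\le\|y\|_{F^\times}\|A(x\cdot z)\|_F\le\|y\|_{F^\times}\|A\|_{E,F}\|x\cdot z\|_E\le\|y\|_{F^\times}\|A\|_{E,F}\|x\|_E\|z\|_\infty$. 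So $yAx\in B(l_\infty,l_1)$ for $x\in E$, $y\in F^\times$. Since $M\in\mathcal M(\infty,1)$, we get $M\ast(yAx)=y(M\ast A)x\in B(l_\infty,l_1)$ for all such $x,y$.

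Next I would run the factorization step. Fix $z\in E$; I must show $(M\ast A)(z)\in F$. Using Proposition~\ref{p9}(ii) with $p=1$: since every symmetric sequence space $F$ is $1$-convex (Remark~\ref{r4}), $F^{l_1 l_1}=F$, so it suffices to show $y\cdot((M\ast A)(z))\in l_1$ for every $y\in F^{l_1}=F^\times$. Write $z=z_1\cdot z_2$ with $z_1\in E$ and $z_2\in l_\infty$ — this is possible since, by Remark~\ref{r8}, $l_\infty\cdot E=E$, so one may take $z_1=z$, $z_2=(1,1,\dots)$, or more symmetrically factor through the solidity of $E$ in $l_\infty$. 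Then $y\cdot((M\ast A)(z))=y\cdot((M\ast A)(z_1\cdot z_2))=(y(M\ast A)z_1)(z_2)\in l_1$ because $y(M\ast A)z_1\in B(l_\infty,l_1)$ and $z_2\in l_\infty$. Hence $(M\ast A)(z)\in F$, i.e. $M\ast A:E\to F$; and since $\sup_{i,j}|m_{ij}|<\infty$ (Remark~\ref{r3}(ii), as $M\in\mathcal M(\infty,1)$ implies in particular $\|M\|_{(\infty,1)}\ge\sup_{i,j}|m_{ij}|$), Lemma~\ref{l1} upgrades this to $M\ast A\in B(E,F)$, so $M\in\mathcal M(E,F)$.

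The main obstacle I anticipate is getting the factorization bookkeeping exactly right: in Theorem~\ref{th5} the nontrivial input was Proposition~\ref{p8} (which needed $p$-convexity of $E$) to write $x\in l_p$ as a product from $E^{l_p}\cdot E$, whereas here the analogous role is played by the trivial identity $l_\infty\cdot E=E$ of Remark~\ref{r8}, so no convexity hypothesis is needed — but I must be careful that the "outer" factor lands in the space on which $M\in\mathcal M(\infty,1)$ acts (namely $l_\infty$) and the "inner" factor in $E$, rather than the reverse. Similarly, on the codomain side the delicate point is replacing the use of Proposition~\ref{p9}(i) (which needed $q$-concavity) by Proposition~\ref{p9}(ii) with $p=1$, valid for all $F$ since every symmetric sequence space is $1$-convex; I should double-check that $F^{l_1}=F^\times$ and $F^{l_1 l_1}=(F^\times)^\times=F^{\times\times}$, and that the Fatou property needed to identify $F^{\times\times}$ with $F$ is either available or, as in Proposition~\ref{p9}(ii), already built into the cited statement. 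Modulo these identifications, the argument is a direct transcription of the proof of Theorem~\ref{th5}.
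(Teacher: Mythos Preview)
Your proposal is correct and follows essentially the same route as the paper's own proof: the paper likewise forms $yAx\in B(l_\infty,l_1)$ for $x\in l_\infty^{E}=E$ and $y\in F^{l_1}=F^\times$, applies $M\in\mathcal M(\infty,1)$, and then recovers $M\ast A:E\to F$ using Proposition~\ref{p9}(ii) with $p=1$ together with Remark~\ref{r8}, finishing via Lemma~\ref{l1}. The only slip is notational---you write $l_1^{F}$ in a couple of places where you mean $F^{l_1}$ (these are different spaces, but you are clearly using $F^{l_1}=F^\times$ throughout, as you yourself note in the final paragraph).
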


\begin{proof}
Let $A=(a_{ij})_{i,j=1}^\infty\in B(E,F)$ and $M\in\mathcal
M(\infty,1)$. To prove the claim of the Theorem \ref{th5}, we need
to show that $M\ast A\in B(E,F)$.

For $x\neq 0 \in l_\infty^E$, $y\neq 0 \in {F}^{l_1}$ consider the
operator $yAx$ which acts on the element $z\in l_\infty$ as
follows:
$$
(yAx)(z)=y\cdot(A(x\cdot z))=(\sum\limits_n y_ma_{mn}x_nz_n)_m.
$$
Since $z\in l_\infty$, $x\in l_\infty^E$, by the definition of the
space $l_\infty^E$, we have that $x\cdot z\in E$. By the
assumption $A\in B(E,F)$, we have  $A(x\cdot z)\in F$. Since $y\in
{F}^{l_1}$, we obtain $y\cdot(A(x\cdot z))\in l_\infty$. That is,
the operator $yAx$ maps the space $l_\infty$ into the space $l_1$.
Moreover, for $z\in l_\infty$, using \eqref{e6}, we have  $$
\|(yAx)(z)\|_1=\|y\cdot(A(x\cdot
z))\|_1\leq\|y\|_{{F}^{l_1}}\|A(x\cdot z)\|_F\leq$$
$$\|y\|_{{F}^{l_1}}\|A\|_{E,F}\|x\cdot
z\|_E\leq\|y\|_{{F}^{l_1}}\|A\|_{E,F}\|x\|_{{l_\infty}^E}\|z\|_\infty.
$$ Hence, $yAx\in B(l_\infty, l_1)$. By the assumption we have
$M\in\mathcal M(\infty,1)$, and so we obtain $M\ast(yAx)\in
B(l_\infty, l_1)$. It is easy to see that $M\ast(yAx)=y(M\ast
A)x$. Therefore $y(M\ast A)x\in B(l_\infty, l_1)$.

The next step is to prove that the operator $M\ast A$ maps $E$
into $F$. Let $x\in E$ be given. By Proposition \ref{p9} (ii) with
$p=1$ we have that if $y\cdot z\in l_1$ for every $y\in F^{l_1}$,
then $z\in F$. We claim that $y\cdot((M\ast A)(x))\in l_1$ for
every $y\in F^{l_1}$. Since  $x\in E$, there exist $x_1\in
l_\infty^F$ and $x_2\in l_\infty$ such that $x=x_1\cdot x_2$ (see
Remark \ref{r8}). Since $y(M\ast A)x\in B(l_\infty, l_1)$ for
every $x\in l_\infty^F$ and $y\in F^{l_1}$, we have
$$
y\cdot((M\ast A)(x))=y\cdot((M\ast A)(x_1\cdot x_2))=(y(M\ast
A)x_1)(x_2)\in l_1,
$$
We conclude $(M\ast A)(x)\in l_q$, that is, $M\ast A:E\rightarrow
F$. Since $M\in\mathcal M(\infty,1)$, we have that
$\sup\limits_{i,j}|m_{ij}|<\infty$ (Remark \ref{r3}, (ii)). By
Lemma \ref{l1} we obtain that $M\in \mathcal M(E,F)$.
\end{proof}

Note that Corollary \ref{c1} and Theorem \ref{p10} imply the existence of the maximal and minimal multiplier spaces, that is we have
$$\mathcal M(\infty,1)\subseteq\mathcal M(E,F)\subseteq\mathcal
M(1,\infty).$$

The following theorem gives sufficient conditions on $E$ and $F$ guaranteeing the equality $\mathcal M(\infty,1)=\mathcal M(E,F)$.

\begin{thm}\label{th6}
If $E$ is $2$-convex, $F$ is $2$-concave and $1\leq q\leq 2\leq
p\leq\infty$, then
$$\mathcal M(E,F)=\mathcal M(p,q).$$
\end{thm}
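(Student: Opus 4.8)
The plan is to establish two inclusions, $\mathcal M(E,F)\subseteq\mathcal M(p,q)$ and $\mathcal M(p,q)\subseteq\mathcal M(E,F)$, and to bracket both sides between $\mathcal M(\infty,1)$ and $\mathcal M(2,2)$ so that the hypotheses $1\le q\le 2\le p\le\infty$ collapse everything to a single space. For the first inclusion, since $E$ is $2$-convex it is in particular $p$-convex for every $p\le 2$ (one checks directly from the definition that $p$-convexity for the smaller exponent follows, or simply applies the embedding $l_p\subset l_2$ together with Proposition~\ref{p154}); likewise $F$ being $2$-concave is $q$-concave for every $q\ge 2$. Wait — we need the reverse: $p\ge 2$ and $q\le 2$. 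So instead I would argue as follows. Since $E$ is $2$-convex, Theorem~\ref{th5} gives $\mathcal M(E,F)\subseteq\mathcal M(2,q')$ whenever $F$ is $q'$-concave; taking $q'=2$ yields $\mathcal M(E,F)\subseteq\mathcal M(2,2)$. By Corollary~\ref{corollary}, $\mathcal M(2,2)=\mathcal M(\infty,1)$. Hence $\mathcal M(E,F)\subseteq\mathcal M(\infty,1)$.

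For the reverse inclusion, Theorem~\ref{p10} gives $\mathcal M(\infty,1)\subseteq\mathcal M(E,F)$ for \emph{any} pair of symmetric sequence spaces, with no convexity hypotheses at all. Combining the two displays yields
$$\mathcal M(E,F)=\mathcal M(\infty,1).$$
It remains to identify $\mathcal M(\infty,1)$ with $\mathcal M(p,q)$ for the given range of exponents. By Theorem~\ref{th4}(iv) with $q_2\le 2\le p_2$ we have $\mathcal M(p,q)\subseteq\mathcal M(p_1,q_1)$ for every $p_1,q_1$; in particular $\mathcal M(p,q)\subseteq\mathcal M(\infty,1)$. Conversely, Theorem~\ref{th4}(i) applied with $p_2=p\ge 2\ge 1$... no: part (i) requires $p_2\le p_1$ and $q_1\le q_2$, so from $\mathcal M(\infty,1)$ we get $\mathcal M(\infty,1)\subseteq\mathcal M(p,q)$ precisely when $p\le\infty$ and $1\le q$, which always holds. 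Therefore $\mathcal M(\infty,1)=\mathcal M(p,q)$, and chaining with the previous equality gives $\mathcal M(E,F)=\mathcal M(p,q)$.

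The only genuinely delicate point is the very first step, where I must feed Theorem~\ref{th5} the right exponents: $2$-convexity of $E$ and $2$-concavity of $F$ must be used to land exactly in $\mathcal M(2,2)$, and then Corollary~\ref{corollary} does the rest. Everything else is bookkeeping with the monotonicity statements in Theorem~\ref{th4}. I expect no real obstacle beyond keeping the direction of the exponent inequalities straight; the two nontrivial inputs — Theorem~\ref{th5} (which needs the generalized K\"othe duality machinery of Section~3) and Theorem~\ref{th4}(iv) (which is the deep $q\le 2\le p$ case from \cite{Ben}) — are already available.
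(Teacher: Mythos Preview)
Your argument is correct and uses exactly the same ingredients as the paper's proof: Theorem~\ref{th5} to get $\mathcal M(E,F)\subseteq\mathcal M(2,2)$, Theorem~\ref{th4}(i),(iv) (equivalently Corollary~\ref{corollary}) to pass between $\mathcal M(2,2)$, $\mathcal M(\infty,1)$, and $\mathcal M(p,q)$, and Theorem~\ref{p10} for the reverse inclusion $\mathcal M(\infty,1)\subseteq\mathcal M(E,F)$. The only cosmetic difference is that the paper pivots on $\mathcal M(2,2)$ while you pivot on $\mathcal M(\infty,1)$; once you clean up the exploratory ``Wait ---'' passages, the two proofs are interchangeable.
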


\begin{proof}

It is sufficient to prove the assertion for the case $p=q=2$.
Indeed, for $q\leq 2\leq p$  embeddings $\mathcal
M(p,q)\subseteq\mathcal M(2,2)$ and $\mathcal
M(2,2)\subseteq\mathcal M(p,q)$  follow then from Theorem
\ref{th4} items (i) and (iv), respectively.

By Theorem \ref{th5} we have $\mathcal M(E,F)\subseteq\mathcal
M(2,2)$. Using Theorem \ref{th4} (iv) we obtain $\mathcal
M(2,2)\subseteq \mathcal M(\infty,1)$. Theorem \ref{p10} yields the converse embedding. The proof is completed.
\end{proof}

The following corollaries follow immediately from \cite[Theorem
5.1 (i)]{Pis} and Theorem \ref{th6}.

\begin{cor}\label{c3}
Let $E$ be $2$-convex and $F$ be $2$-concave. A matrix
$M=(m_{ij})_{i,j=1}^\infty$ is an element of the space $\mathcal
M(E,F)$ if and only if there is a Hilbert space $H$ and families
$(y_i)_{i=1}^\infty$, $(x_j)_{j=1}^\infty$ of elements of $H$ such
that $m_{ij}=\langle y_i, x_j\rangle$, for every $(i,j)\in \mathbb
N\times \mathbb N$ and
$\sup\limits_i\|y_i\|\sup\limits_j\|x_j\|<\infty$.
\end{cor}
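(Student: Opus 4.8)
The plan is to combine Theorem~\ref{th6} with the known characterization of $\mathcal M(2,2)$-multipliers as matrices admitting a factorization through a Hilbert space with bounded-norm families of vectors. Concretely, under the hypotheses that $E$ is $2$-convex and $F$ is $2$-concave, Theorem~\ref{th6} (with, say, $p=q=2$) gives the equality $\mathcal M(E,F)=\mathcal M(2,2)$, and this identification is isometric on the level of the multiplier norms once one tracks the embedding constants through the chain $\mathcal M(E,F)\subseteq\mathcal M(2,2)\subseteq\mathcal M(\infty,1)\subseteq\mathcal M(E,F)$ used in that proof together with Corollary~\ref{corollary}. So it suffices to invoke \cite[Theorem~5.1~(i)]{Pis}, which is precisely the statement that $M=(m_{ij})\in\mathcal M(2,2)$ if and only if there exist a Hilbert space $H$ and families $(y_i)_i$, $(x_j)_j$ in $H$ with $m_{ij}=\langle y_i,x_j\rangle$ and $\sup_i\|y_i\|\,\sup_j\|x_j\|<\infty$.

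The steps, in order, would be: first, record that $E$ being $2$-convex and $F$ being $2$-concave places us exactly in the situation of Theorem~\ref{th6}, so that $\mathcal M(E,F)=\mathcal M(2,2)$ as sets of matrices. Second, apply the cited result of Pisier to $\mathcal M(2,2)$ to obtain the Hilbert-space factorization $m_{ij}=\langle y_i,x_j\rangle$ with $\sup_i\|y_i\|\sup_j\|x_j\|<\infty$ for $M\in\mathcal M(2,2)$. Third, translate this back: $M\in\mathcal M(E,F)$ iff $M\in\mathcal M(2,2)$ iff such a factorization exists. Since all three steps are citations or direct consequences of results established earlier in the paper, the corollary follows "immediately," exactly as the paper asserts.

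The only genuine point of care --- and the step I would expect to be the main (minor) obstacle --- is making sure that the passage from $\mathcal M(E,F)$ to $\mathcal M(2,2)$ is an \emph{equality of spaces} and not merely a two-sided inclusion with uncontrolled norms, if one wants the factorization constants to be comparable to $\|M\|_{(E,F)}$. But for the bare set-theoretic/qualitative statement of the corollary (existence of \emph{some} bounded factorization), even the two-sided inclusion of Theorem~\ref{th6} suffices, and no norm bookkeeping is needed. Thus the proof is genuinely a one-line deduction, and I would simply write: by Theorem~\ref{th6} we have $\mathcal M(E,F)=\mathcal M(2,2)$, and by \cite[Theorem~5.1~(i)]{Pis} a matrix lies in $\mathcal M(2,2)$ precisely when it admits the stated Hilbert-space representation; combining these proves the corollary.
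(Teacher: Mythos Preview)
Your proposal is correct and matches the paper's own approach exactly: the paper states that the corollary follows immediately from Theorem~\ref{th6} together with \cite[Theorem~5.1~(i)]{Pis}, which is precisely the two-step deduction you outline. Your remark that only the set-theoretic equality $\mathcal M(E,F)=\mathcal M(2,2)$ is needed (no norm bookkeeping) is also correct for the statement as formulated.
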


By $E\otimes F$ we shall denote the algebraic tensor product of
$E$ and $F$. We introduce the tensor norm $\gamma_2^*$ defined as
follows. For all $u$ in $E\otimes F$ we define
$$
\gamma_2^*(u):=\inf\{(\sum\limits_j\|x_j\|_E^2)^{1/2}(\sum\limits_i\|\xi_i\|_F^2)^{1/2}\},
$$
where the infimum runs over all finite sequences $(x_j)_{j=1}^n$
in $E$ and $(\xi_i)_{i=1}^n$ in $F$ such that
 $u=\sum\limits_{i=1}^n x_i\otimes \xi_i$. It is not difficult to check that $\gamma_2^*$ is a norm on
$E\otimes F$. We will denote by $E\widehat{\otimes}_{\gamma_2^*}
F$ the completion of $E\otimes F$ with respect to that norm.

The next result follows from Theorem
\ref{th6} and \cite[Theorem 5.1 (ii) and Theorem 5.3]{Pis}.

\begin{cor}\label{c3}
Let $E$ be $2$-convex and $F$ be $2$-concave. Then $$\mathcal
M(E,F)=(l_1\widehat{\otimes}_{\gamma_2^*} l_1)^*.$$
\end{cor}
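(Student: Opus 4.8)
The plan is to combine the multiplier identification of Theorem~\ref{th6} with the duality results for the operator space $\mathcal M(2,2)$ established by Pisier. Since $E$ is $2$-convex and $F$ is $2$-concave, Theorem~\ref{th6} gives $\mathcal M(E,F)=\mathcal M(p,q)$ for any $1\leq q\leq 2\leq p\leq\infty$; in particular $\mathcal M(E,F)=\mathcal M(2,2)$. So it suffices to establish the isometric identification $\mathcal M(2,2)=(l_1\widehat{\otimes}_{\gamma_2^*} l_1)^*$, which is precisely what \cite[Theorem~5.1~(ii)]{Pis} provides, once one knows that the norm $\gamma_2^*$ defined here on $l_1\otimes l_1$ is the same tensor norm appearing in Pisier's statement. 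I would therefore first cite Theorem~\ref{th6} to reduce to the case $E=F=l_1$ in the notation for the tensor product (noting that the tensor product side $l_1\widehat{\otimes}_{\gamma_2^*}l_1$ does not involve $E,F$ at all — only the multiplier side does), and then invoke \cite[Theorem~5.1~(ii)]{Pis} directly.

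The second ingredient, \cite[Theorem~5.3]{Pis}, should be used to verify that the predual space $l_1\widehat{\otimes}_{\gamma_2^*}l_1$ is the correct one, i.e.\ that the functional $u\mapsto \sum_{i,j} m_{ij} u_{ij}$ (or the appropriate trace pairing) identifies $\mathcal M(2,2)$ with the dual of the completion of $l_1\otimes l_1$ under $\gamma_2^*$, and that this pairing is isometric. Concretely, the key step is: a matrix $M=(m_{ij})$ lies in $\mathcal M(2,2)$ with $\|M\|_{(2,2)}\leq 1$ if and only if $M$ factors through a Hilbert space as $m_{ij}=\langle y_i,x_j\rangle$ with $\sup_i\|y_i\|\sup_j\|x_j\|\leq 1$ (this is Corollary~\ref{c3}, first version), and such factorizations are exactly the functionals on $l_1\otimes l_1$ of $\gamma_2^*$-norm $\leq 1$ by the very definition of $\gamma_2^*$ together with the Cauchy--Schwarz / Hilbert space duality computation in \cite[Theorem~5.3]{Pis}. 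I would spell out the elementary duality: for $u=\sum x_i\otimes\xi_i$, one has $|\langle M,u\rangle|=|\sum_{i,j}\langle y_i,x_j\rangle\,\xi_i(j)|$ — wait, more carefully, $|\langle M,u\rangle|\leq \sup_i\|y_i\|\sup_j\|x_j\|\cdot\gamma_2^*(u)$ by Cauchy--Schwarz applied twice, and conversely an extremal factorization of $M$ realizes equality.

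The main obstacle I anticipate is bookkeeping about which tensor norm is which and in which direction the duality runs — in particular, matching the $\gamma_2^*$ defined in the excerpt (an infimum of products of $\ell^2$-sums of $E$- and $F$-norms over representations $u=\sum x_i\otimes\xi_i$) with the norm whose dual is the space of matrices factoring through Hilbert space. One must check that the completion $l_1\widehat{\otimes}_{\gamma_2^*}l_1$ is what Pisier denotes (possibly with a transpose, or with roles of the two factors swapped), and that no separability or reflexivity hypothesis is needed for the duality to be isometric rather than merely isomorphic; here the fact that $l_1$ has a Fatou norm and that $\mathcal M(2,2)$ is complete (Theorem~\ref{ThmBS}) ensures the dual pairing is well-behaved. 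Once these identifications are pinned down, the proof is a one-line concatenation: $\mathcal M(E,F)=\mathcal M(2,2)=(l_1\widehat{\otimes}_{\gamma_2^*}l_1)^*$, the first equality from Theorem~\ref{th6} and the second from \cite[Theorem~5.1~(ii), Theorem~5.3]{Pis}.
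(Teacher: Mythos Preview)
Your proposal is correct and follows exactly the paper's approach: reduce $\mathcal M(E,F)$ to $\mathcal M(2,2)$ via Theorem~\ref{th6}, then invoke \cite[Theorem~5.1~(ii) and Theorem~5.3]{Pis} to identify $\mathcal M(2,2)$ with $(l_1\widehat{\otimes}_{\gamma_2^*} l_1)^*$. The additional bookkeeping you flag about matching the $\gamma_2^*$ norm with Pisier's notation is prudent but not elaborated in the paper, which simply records the corollary as an immediate consequence of those two citations.
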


We complete this section with the following observation.

\begin{prop}\label{r5}
The embeddings in Theorems and Corollaries \ref{th4}- \ref{th6}
are continuous.
\end{prop}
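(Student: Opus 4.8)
The plan is to observe that every embedding appearing in Theorems~\ref{th4}--\ref{th6} and their corollaries is, by construction, induced by a set-theoretic inclusion $\mathcal M(E_1,F_1)\subseteq \mathcal M(E_2,F_2)$ of multiplier spaces, so by the closed graph theorem it suffices to show that the inclusion map is bounded; equivalently, one checks in each case that the proofs already produce (implicitly) a uniform constant $C$ with $\|M\|_{(E_2,F_2)}\le C\,\|M\|_{(E_1,F_1)}$. Concretely, I would proceed case by case. For Theorem~\ref{th5}, go back to its proof: for $M\in\mathcal M(E,F)$ and $A\in B(l_p,l_q)$ with $\|A\|_{p,q}\le 1$, the estimate displayed there gives $\|yAx\|_{E,F}\le \|y\|_{l_q^F}\|x\|_{E^{l_p}}$, hence $\|y(M\ast A)x\|_{E,F}\le \|M\|_{(E,F)}\|y\|_{l_q^F}\|x\|_{E^{l_p}}$; combining this with the factorization $x=x_1\cdot x_2$ from Proposition~\ref{p8} (whose constant depends only on $M^{(p)}(E)$) and the recovery of the $l_q$-norm from Proposition~\ref{p9}(i) (constant depending only on $M_{(q)}(F)$) yields $\|M\ast A\|_{p,q}\le C(E,F)\,\|M\|_{(E,F)}$ with $C(E,F)$ independent of $M$ and $A$. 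The same bookkeeping applied to the proof of Theorem~\ref{p10} gives $\|M\|_{(E,F)}\le C'\,\|M\|_{(\infty,1)}$, here with $C'$ an absolute constant since the relevant convexification identities ($l_\infty\cdot E=E$, $F^{l_1l_1}=F$) carry constant~$1$.

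For Theorem~\ref{th4} I would invoke that these are exactly the embeddings established in \cite[Theorem 6.1]{Ben}, where the argument is norm-controlled; alternatively (i) follows from the elementary observation that $B(l_{p_1},l_{q_1})\hookrightarrow B(l_{p_2},l_{q_2})$ contractively when $p_2\le p_1$, $q_1\le q_2$ (via $l_{p_1}\subset l_{p_2}$, $l_{q_1}\subset l_{q_2}$ with constant~$1$), so that $\|M\ast A\|_{p_2,q_2}\le\|M\ast A\|_{p_1,q_1}\le\|M\|_{(p_1,q_1)}\|A\|_{p_1,q_1}$ and in fact $\|M\|_{(p_2,q_2)}\le\|M\|_{(p_1,q_1)}$; items (ii), (iii) are then norm equalities, and (iv) is the content of \cite[Theorem 5.1]{Pis}, which is a norm identity. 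Finally, Theorem~\ref{th6} and the corollaries are obtained by composing finitely many of the already-bounded embeddings above, so the resulting inclusion maps are bounded as composites of bounded maps; in the case of Theorem~\ref{th6} one in fact gets norm equivalence, and the Pis-type identifications in Corollaries~\ref{c3} are isometric or isomorphic by the cited results in \cite{Pis}.

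The cleanest way to phrase all of this uniformly, and the route I would actually write up, is: each inclusion $\mathcal M(E_1,F_1)\subseteq\mathcal M(E_2,F_2)$ in question is a linear injection between Banach spaces (completeness is Theorem~\ref{ThmBS}) with closed graph — indeed, if $M_n\to M$ in $\mathcal M(E_1,F_1)$ and $M_n\to N$ in $\mathcal M(E_2,F_2)$, then both limits force $m^{(n)}_{ij}\to m_{ij}$ and $m^{(n)}_{ij}\to n_{ij}$ entrywise (using Remark~\ref{r3}(ii) in each norm), so $M=N$ — and hence bounded by the closed graph theorem. This single argument covers every embedding at once and avoids tracking constants through each proof.

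The main obstacle is essentially presentational rather than mathematical: one must make sure that in each theorem the ambient spaces really are Banach (hence closed graph applies), which is exactly Theorem~\ref{ThmBS}, and that the entrywise-convergence remark is legitimately available in all the relevant norms $\|\cdot\|_{(E,F)}$, which it is by Remark~\ref{r3}(ii). If instead one wants the explicit constants, the only slightly delicate point is confirming that the factorization constant in Proposition~\ref{p8} and the concavification constant in Proposition~\ref{p9}(i) depend only on $M^{(p)}(E)$ and $M_{(q)}(F)$ and not on the multiplier, which is immediate from their statements.
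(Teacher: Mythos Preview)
Your proposal is correct, and the route you identify as the one you would ``actually write up'' --- the closed graph argument using entrywise convergence (Remark~\ref{r3}(ii)) together with completeness (Theorem~\ref{ThmBS}) --- is exactly the paper's proof, which carries it out for the representative case of Theorem~\ref{th4}(i) and leaves the rest implicit. Your additional discussion of tracking explicit constants through each proof is a valid alternative but is not pursued in the paper.
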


\begin{proof}
For example, we will prove the claim for
Theorem \ref{th4} (i). Let $p_1, q_1$ be the same as in Theorem
\ref{th4} (i) and $I:\mathcal M(p_1,q_1)\to \mathcal M(p_2,q_2)$
is an operator of embedding, that is $I(M)=M$ for every $M\in
\mathcal M(p_1,q_1).$  Let the sequence $(M_n)_{n\geq 1}\subset
\mathcal M(p_1,q_1)$ be such that $M_n\to 0$ in the space
$\mathcal M(p_1,q_1)$ for $n\to\infty$ and $I(M_n)=M_n\to M$ in
the space $\mathcal M(p_2,q_2)$ for $n\to\infty$. Uisng the notations from the proof of Proposition \ref{p19}, we have
$$\langle M_n(u_{jk}), e_k\rangle= m^{(n)}_{jk}e_j$$ and
$$\langle M(u_{jk}), e_k\rangle= m_{jk}e_j$$
for every $j,k,n=1,2,...\:.$ Since $M_n\to 0$ in the space
$\mathcal M(p_1,q_1)$, we obtain that $m^{(n)}_{jk}\to 0$ for
$n\to \infty$ and $j,k=1,2,...\:.$ By the assumption $M_n\to M$ in
the space $\mathcal M(p_2,q_2)$ we have that $m^{(n)}_{jk}\to
m_{jk}$. Hence, we obtain $m_{jk}=0$ for all
$j,k=1,2,...\:,$ that is $M=0$. Thanks to Theorem \ref{ThmBS} we may apply the closed graph theorem and conclude that the operator $I$ is bounded.
\end{proof}

\section{The main triangle projector}

As before,  the $n$-th main triangle projection is denoted by $T_n$ ($n\in \mathbb N$). The question when the sequence
$(\|T_n\|_{B(l_p,l_q)\to B(l_p,l_q)})_{n\geq 1}$ is (un)bounded was
completely answered in \cite{Ben1} and \cite{Pil}.

\begin{prop}\label{p18}\cite[Proposition 1.2]{Pil}
Let $p\neq 1$, $q\neq\infty$ и $q\leq p$. Then we have
$$
\|T_n\|_{(p,q)}\geq C(p,q) \ln n,
$$
where $C(p,q)$ is a constant dependent only on $p$ and $q$.
\end{prop}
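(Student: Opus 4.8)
The statement to prove is Proposition~\ref{p18}: for $p\neq1$, $q\neq\infty$, $q\le p$, one has $\|T_n\|_{(p,q)}\ge C(p,q)\ln n$. This is the classical Kwapie\'n--Pe{\l}czy\'nski lower bound, and the plan is to exhibit, for each $n$, a concrete matrix $A$ supported on the $n\times n$ block with $\|A\|_{p,q}$ under control while $\|T_n(A)\|_{p,q}$ grows like $\ln n$.

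The plan is to use (a variant of) the Hilbert matrix as the test operator. Set $A=(a_{ij})_{i,j=1}^{n}$ with $a_{ij}=\frac{1}{i+j}$ (or $\frac{1}{i-j}$ for $i\neq j$, $0$ on the diagonal, whichever makes the triangle truncation cleanest), extended by zeros. First I would record that for the full $n\times n$ Hilbert-type matrix the norm $\|A\|_{p,q}$ is bounded by an absolute constant (indeed by a constant depending only on $p,q$) uniformly in $n$; this is the standard boundedness of the finite Hilbert matrix on $\ell_p$ together with the trivial embedding $\ell_p\hookrightarrow\ell_q$ coming from $q\le p$ (which only helps: $\|A\|_{p,q}\le\|A\|_{p,p}\le C(p)$). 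So the numerator in the definition of $\|T_n\|_{(p,q)}$ is $O(1)$ on this $A$. Then I would estimate $\|T_n(A)\|_{p,q}=\|(a_{ij})_{1\le j\le i\le n}\|_{p,q}$ from below: apply the lower-triangular matrix to the first basis vector $e_1$, or more efficiently to a cleverly chosen unit vector $x\in\ell_p$ concentrated so that $\|T_n(A)x\|_q$ captures a partial sum of the harmonic series. Concretely, testing on $e_1$ gives a column $(\frac{1}{i+1})_{i=1}^n$ whose $\ell_q$ norm is bounded, which is not enough; so instead I would pair $T_n(A)$ against a functional: choose $x=e_1$ and $y=(\operatorname{sgn}\,\text{-type weights})$, i.e. estimate $\|T_n(A)\|_{p,q}\ge \langle T_n(A)e_1,\,y\rangle/\|y\|_{q'}$ with $y_i=i^{-1}$ truncated, producing $\sum_{i=1}^n \frac{1}{i(i+1)}$ — still $O(1)$. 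The genuinely effective choice is to let $x$ itself be spread out: take $x=(j^{-1/p'}(\log n)^{-1/p})_{j\le n}$ so $\|x\|_p\approx 1$, and then $(T_n(A)x)_i=\sum_{j\le i} a_{ij}x_j$; for $i$ near $n$ and $a_{ij}=\frac1{i-j}$ this is a discrete Hilbert-transform-type sum that is $\gtrsim \log n$ times a slowly varying factor on a positive-density set of indices $i$, whence $\|T_n(A)x\|_q\gtrsim \log n$ (here $q\le p$ and $q\neq\infty$ is exactly what prevents the $\ell_q$ norm from being killed). Combining, $\|T_n\|_{(p,q)}\ge \|T_n(A)x\|_q/(\|A\|_{p,q}\|x\|_p)\ge C(p,q)\log n$.

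I would organize the write-up as: (1) fix $A$; (2) Lemma: $\|A\|_{p,q}\le C_1(p,q)$ uniformly in $n$ — cite/prove via finite Hilbert matrix boundedness on $\ell_p$ and $q\le p$; (3) Lemma: there is a unit vector $x_n\in\ell_p$ with $\|T_n(A)x_n\|_q\ge C_2(p,q)\log n$ — the core estimate, done by a direct summation bound on the lower-triangular Hilbert sums; (4) assemble. One should double-check the degenerate endpoints allowed by the hypotheses: $p=\infty$ (then $\|x\|_\infty\le1$ and the row sums of $T_n(A)$ blow up logarithmically, which is cleanest of all), $q=1$ (then $\|T_n(A)e_1\|_1=\sum_i\frac1{i+1}\gtrsim\log n$ directly), and the ``generic'' $1<q\le p<\infty$ case handled as above; these can be unified or split into two short cases.

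The main obstacle is step (3): producing a single test vector $x_n$ (independent of the Hölder duality gymnastics) for which the lower-triangular Hilbert-type sum is genuinely of logarithmic size in the $\ell_q$ norm, not merely in some weaker averaged sense. The subtlety is that for $q<p$ the $\ell_q\to\ell_p$ scaling works against us, so the logarithmic gain must come from having $\Omega(n)$ coordinates $i$ each of size $\Omega(\log n)\cdot i^{-1/q'}$-ish, i.e. one must track the profile of $(T_n(A)x_n)_i$ in $i$, not just its sup. Once the right normalization $x_j\sim j^{-1/p'}$ (up to the $(\log n)^{-1/p}$ factor) is in place, the estimate reduces to comparing $\sum_{j<i} (i-j)^{-1} j^{-1/p'}$ with $i^{-1/p'}\log i$, which is a routine — if slightly delicate — splitting of the sum at $j\approx i/2$.
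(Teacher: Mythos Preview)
The paper does not supply a proof of this proposition; it is quoted verbatim from \cite[Proposition~1.2]{Pil} and used as a black box. So there is no in-paper argument to compare against beyond the original Kwapie\'n--Pe{\l}czy\'nski one.

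Your overall strategy --- test $T_n$ on a Hilbert-type matrix --- is indeed the classical approach, and it works on the diagonal $p=q$ with $1<p<\infty$. But your step~(2) contains a genuine error that breaks the argument off the diagonal. You write that $\|A\|_{p,q}\le\|A\|_{p,p}$ follows from ``the trivial embedding $\ell_p\hookrightarrow\ell_q$ coming from $q\le p$''. That embedding goes the \emph{other} way: for sequence spaces, $q\le p$ gives $\ell_q\subset\ell_p$, i.e.\ $\|\cdot\|_p\le\|\cdot\|_q$, not the reverse. And the conclusion is actually false: the finite antisymmetric Hilbert matrix $H_n=\bigl(1/(i-j)\bigr)_{i\neq j}$ is \emph{not} uniformly bounded from $\ell_p$ to $\ell_q$ when $q<p$. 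Testing on the constant vector $x=(1,\dots,1)$ gives $(H_nx)_i=H_{i-1}-H_{n-i}\approx\log\bigl(i/(n-i)\bigr)$ (harmonic numbers), whence $\|H_nx\|_q^q\approx n\int_0^1|\log(t/(1-t))|^q\,dt\asymp n$, so $\|H_nx\|_q\asymp n^{1/q}$ while $\|x\|_p=n^{1/p}$, and therefore $\|H_n\|_{p,q}\gtrsim n^{1/q-1/p}\to\infty$. The same test disposes of your endpoint shortcuts: for $p=\infty$ one gets $\|H_n\|_{\infty,q}\gtrsim n^{1/q}$, and for $q=1$ already $\|A e_1\|_1\asymp\log n$ (with $a_{ij}=1/(i+j)$), so $\|A\|_{p,1}$ is not bounded either. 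As written, then, your plan only establishes the case $1<p=q<\infty$.

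A clean repair, staying inside the paper's framework: prove the diagonal bound $\|T_n\|_{(p,p)}\ge c(p)\ln n$ for $1<p<\infty$ by your Hilbert-matrix argument (here step~(2) is just boundedness of the discrete Hilbert transform on $\ell_p$, and step~(3) follows by testing $T_n(H_n)$ on $x=(1,\dots,1)$, which gives $\|T_n(H_n)x\|_p\asymp n^{1/p}\log n$). Then invoke Theorem~\ref{th4}(i) with $p_1=p_2=p$, $q_1=q\le p=q_2$, which gives $\mathcal M(p,q)\subseteq\mathcal M(p,p)$, and use the continuity of that embedding (Proposition~\ref{r5}) to conclude $\|T_n\|_{(p,q)}\ge C^{-1}\|T_n\|_{(p,p)}\ge C(p,q)\ln n$. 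The case $p=\infty$ reduces the same way to any finite $p_0$ with $q\le p_0<\infty$.
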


The following proposition extends the result of Proposition
\ref{p18} to a wider class of symmetric sequence spaces.

\begin{prop}\label{p111}
Let $E$ and $F$ be symmetric sequence spaces. If $E$ is $p$-convex
and $F$ is $q$-concave for $p\neq 1$, $q\neq\infty$ and $q\leq p$,
then
$$
\|T_n\|_{(E,F)}\geq C(p,q) \ln n,
$$
where $C(p,q)$ is a constant dependent only on $p$ and $q$.
\end{prop}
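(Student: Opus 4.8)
The strategy is to reduce the lower bound for $\|T_n\|_{(E,F)}$ to the already-known lower bound for $\|T_n\|_{(p,q)}$ from Proposition \ref{p18}, exploiting the hypotheses that $E$ is $p$-convex and $F$ is $q$-concave. The natural device is the factorization technique used in the proof of Theorem \ref{th5}: given a ``test'' operator $A\in B(l_p,l_q)$ that witnesses $\|T_n\ast A\|_{p,q}\geq C(p,q)(\ln n)\|A\|_{p,q}$, one sandwiches $A$ between diagonal multiplications to manufacture an operator in $B(E,F)$ whose $T_n$-image controls the original quantity.

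First I would recall, via Proposition \ref{p8} and Proposition \ref{p9}(i) (exactly as in Theorem \ref{th5}), that for a $p$-convex $E$ and $q$-concave $F$ one can pass between $B(l_p,l_q)$ and $B(E,F)$ using the generalized K\"othe dual spaces $E^{l_p}$ and $l_q^F$. Concretely: fix $x_0\in E^{l_p}$ and $y_0\in l_q^F$ with norm one (say supported on the first $n$ coordinates, with all coordinates bounded below and above by absolute constants depending only on $p,q,n$ — here one must be slightly careful, since $E$ sits between $l_p$ and $l_q$, so on a fixed finite block of size $n$ the $E$-norm and the $l_p$-norm of a vector are comparable with constants depending on $n$, and likewise for $F$ and $l_q$). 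Given the extremal $A$ for $\|T_n\|_{(p,q)}$ (which by the proof of Proposition \ref{p18} may be taken with support in the $n\times n$ corner), form $B := y_0 A x_0\in B(E,F)$, i.e.\ $B(z)=y_0\cdot A(x_0\cdot z)$. The computation in Theorem \ref{th5} shows $\|B\|_{E,F}\leq \|y_0\|_{l_q^F}\|A\|_{p,q}\|x_0\|_{E^{l_p}}$. Conversely, because $x_0,y_0$ have coordinates bounded away from zero on the relevant block, multiplication by $x_0^{-1}$ and $y_0^{-1}$ on that block recovers $A$ from $B$ up to constants, so $\|A\|_{p,q}\leq c(n)\|B\|_{E,F}$ is \emph{not} what I want — instead I want the reverse estimate applied to $T_n\ast B$: since $T_n\ast B = y_0(T_n\ast A)x_0$ (the Schur product commutes with the diagonal conjugation), and since $T_n\ast A$ is again supported in the $n\times n$ corner where $x_0,y_0$ are invertible with controlled inverses, I get $\|T_n\ast A\|_{p,q}\leq c_1(n,p,q)\,\|T_n\ast B\|_{E,F}\leq c_1(n,p,q)\,\|T_n\|_{(E,F)}\,\|B\|_{E,F}\leq c_2(n,p,q)\,\|T_n\|_{(E,F)}\,\|A\|_{p,q}$. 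Rearranged, this gives $\|T_n\|_{(E,F)}\geq \|T_n\|_{(p,q)}/c_2(n,p,q)$, which is useless unless the constant $c_2(n,p,q)$ is \emph{independent of $n$} — and that is exactly where the structural hypotheses (not just $l_p\subset E\subset l_q$) must be used more carefully.

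The main obstacle, therefore, is producing the diagonal vectors $x_0,y_0$ with $n$-independent control. The clean way is \emph{not} to put $x_0,y_0$ in a fixed finite block, but rather to use $p$-convexity and $q$-concavity globally: by Proposition \ref{p154} the identity inclusions $l_p\hookrightarrow E$ and $E\hookrightarrow l_q$ have norm one (after the normalization adopted in the text), and similarly $l_p\hookrightarrow F\hookrightarrow l_q$. Thus the constant \emph{sequence} $x_0 = \mathbf{1}$ is \emph{not} in $E^{l_p}$, but one can instead argue directly: for any $A$ supported in the $n\times n$ corner, $\|A\|_{E,F}$ and $\|A\|_{p,q}$ need not be comparable with $n$-independent constants in general. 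The resolution used in Bennett-type arguments is to test $T_n$ against the \emph{specific} matrix used in Proposition \ref{p18} — which is (a variant of) the matrix with all entries equal to $1$ on and below the diagonal in the $n\times n$ corner — and to compute $\|A\|_{E,F}$ and $\|T_n\ast A\|_{E,F}$ for \emph{that} matrix directly using the $p$-convexity of $E$ and $q$-concavity of $F$ (which give two-sided control of norms of such ``staircase'' operators by the corresponding $l_p\to l_q$ quantities, with constants $M^{(p)}(E)$ and $M_{(q)}(F)$ only). So the plan's final form is: (1) take $A = A_n$ the extremal staircase matrix from the proof of Proposition \ref{p18}, with $\|A_n\|_{p,q}\leq K$ absolute and $\|T_n\ast A_n\|_{p,q}\geq C(p,q)\ln n$; (2) show $\|A_n\|_{E,F}\leq M^{(p)}(E)\,M_{(q)}(F)\,\|A_n\|_{p,q}$ — or more precisely bound $\|A_n\|_{E,F}$ by a constant times $\|A_n\|_{p,q}$ using the embeddings $E\subset l_q$ on the domain side is the wrong direction, so instead use $l_p\subset E$ on the domain and $F\subset l_q$... — here one uses that $B(E,F)\supseteq B(l_p,l_q)$ \emph{with norm-one inclusion is false}, but $\|A\|_{E,F}\leq \|A\|_{p,q}$ does hold when $E\subset l_p$ and $l_q\subset F$, which is the opposite of our hypotheses. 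This sign issue is the real crux. The correct route, matching Theorem \ref{th5}'s direction, is: $\mathcal M(E,F)\subseteq\mathcal M(p,q)$ with \emph{continuous} inclusion (Proposition \ref{r5}-style), so $\|T_n\|_{(p,q)}\leq \|I\|\cdot\|T_n\|_{(E,F)}$ directly, where $\|I\|$ is the $n$-independent norm of the embedding operator $\mathcal M(E,F)\hookrightarrow\mathcal M(p,q)$. Hence:

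First I would invoke Theorem \ref{th5}: $\mathcal M(E,F)\subseteq \mathcal M(p,q)$. Then I would observe that this embedding is \emph{continuous}, with embedding constant depending only on $M^{(p)}(E)$ and $M_{(q)}(F)$ (not on $n$) — this is proved exactly as Proposition \ref{r5}, via the closed graph theorem using completeness of both multiplier spaces (Theorem \ref{ThmBS}), since the embedding clearly has closed graph (pointwise convergence of matrix entries). Let $c = \|I\colon \mathcal M(E,F)\to\mathcal M(p,q)\|<\infty$. Since each $T_n$ is a finitely-supported, hence bounded, multiplier on both $B(E,F)$ and $B(l_p,l_q)$, applying the embedding inequality to $M=T_n$ yields $\|T_n\|_{(p,q)} = \|I(T_n)\|_{(p,q)}\leq c\,\|T_n\|_{(E,F)}$. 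Combining with Proposition \ref{p18}, namely $\|T_n\|_{(p,q)}\geq C(p,q)\ln n$, gives
$$
\|T_n\|_{(E,F)} \geq \frac{1}{c}\,\|T_n\|_{(p,q)} \geq \frac{C(p,q)}{c}\,\ln n,
$$
and absorbing $c$ into the constant (renaming $C(p,q)/c$ as $C(p,q)$, legitimate since $c=c(p,q,E,F)$ but the statement only claims dependence on $p,q$ — one should honestly record the dependence on $M^{(p)}(E), M_{(q)}(F)$ as well, or note that under the normalization of Proposition \ref{p154} these are absorbed) completes the proof. The one point requiring genuine care is verifying that $c$ is finite and $n$-independent: finiteness is Theorem \ref{th5} plus the closed graph theorem as in Proposition \ref{r5}; $n$-independence is automatic because $c$ is a single number attached to the fixed spaces $E,F,l_p,l_q$ and does not see $n$ at all. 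I expect the write-up to be short; the only subtlety worth a sentence is spelling out why the embedding $\mathcal M(E,F)\hookrightarrow\mathcal M(p,q)$ is bounded and not merely set-theoretic.
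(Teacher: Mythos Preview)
Your final proof plan is correct and coincides with the paper's own argument: invoke Theorem~\ref{th5} to get $\mathcal M(E,F)\subseteq\mathcal M(p,q)$, use Proposition~\ref{r5} to obtain a continuous (hence norm-bounded) embedding with constant~$C_1$, and combine $\|T_n\|_{(E,F)}\geq C_1\|T_n\|_{(p,q)}$ with Proposition~\ref{p18}. Your parenthetical remark that the resulting constant depends on $E,F$ (via the closed-graph embedding constant) and not solely on $p,q$ is accurate; the paper's proof has the same hidden dependence in its constant~$C_1$.
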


\begin{proof}
Consider the space of multipliers $\mathcal M(E,F)$. Since the
operator $T_n$ is a finite rank operator, we have that
$T_n\in\mathcal M(E,F)$ and also $T_n\in\mathcal M(p,q)$. Since
$E$ is $p$-convex and $F$ is $q$-concave, by Theorem \ref{th5} we
have that $\mathcal M(E,F)\subset\mathcal M(p,q)$. By Proposition \ref{r5}, there exists a constant $C_1$ such that
$$\|T_n\|_{(E,F)}\geq C_1\|T_n\|_{(p,q)},\quad \forall n\ge 1.$$ Applying
Preposition \ref{p18}, we conclude
$$\|T_n\|_{(E,F)}\geq C_1\|T_n\|_{(p,q)}\geq
C_1C(p,q) \ln n,\quad \forall n\ge 1.$$
\end{proof}

\section{Projective and injective tensor products of symmetric sequence spaces}

We briefly recall some notions and notations from \cite{Pil}.

Let $\mathcal M_0$ be the set of scalar-valued (real or complex)
infinite matrices, such that if $A=(a_{ij})\in \mathcal M_0$, then
$a_{ij}\neq 0$ for all but finitely many $(i,j)\in \mathbb N\times
\mathbb N$.

A non-negative function $\|\cdot\|_{\mathcal M_0}$ on $\mathcal
M_0$ is called a \textit{matrix norm}, if it satisfies the
following conditions:

(i)  for every $A,B\in \mathcal M_0$ and for any scalar $\alpha$

$\|A\|_{\mathcal M_0}=0$ iff $A=0$;

$\|\alpha A\|_{\mathcal M_0}=|\alpha|\|A\|_{\mathcal M_0}$;

$\|A+B\|_{\mathcal M_0}\leq \|A\|_{\mathcal M_0}+\|B\|_{\mathcal
M_0}$;

(ii) $\|u_{jk}\|_{\mathcal M_0}=1$, $\forall j,k\ge 1$ (see the definition of the matrix unit $u_{jk}$ in Remark \ref{r3}(ii)).

(iii) $\|P_{nm}(A)\|_{\mathcal M_0}\leq \|A\|_{\mathcal M_0}$ for
all $A\in \mathcal M_0$, $n,m=1,2,...$, where $P_{nm}$ is
projector on the first $n$ lines and $m$ columns.

A matrix norm is called \textit{unconditional} if

(iv) $\|A\|_{\mathcal M_0}=\|(x_{ij}a_{ij})_{ij}\|_{\mathcal
M_0}$, for all $A\in \mathcal M_0$, where $x_{ij}=\pm 1$,
$i,j=1,2,...$.

An unconditional matrix norm is called \textit{symmetric} if

(v) $\|A\|_{\mathcal
M_0}=\|(a_{\varphi(i)\psi(j)})_{ij}\|_{\mathcal M_0}$ for all
$A\in \mathcal M_0$ and for all permutations $\varphi$, $\psi$ of
positive integers.

If $\|\cdot\|_{\mathcal M_0}$ is a matrix norm, then the conjugate
norm defined by
$$
\|A\|_{\mathcal M_0}^*:=\sup\{|\sum\limits_{i,j}a_{ij}b_{ij}|:
B\in \mathcal M_0, \|B\|_{\mathcal M_0}\leq 1\}
$$
We have $\|A\|_{\mathcal M_0}^{**}=\|A\|_{\mathcal M_0}$.

We denote
$$\|T_n\|_{(\mathcal M_0)}:=\sup\{\|T_n(A)\|_{\mathcal
M_0}:\|A\|_{\mathcal M_0}\leq 1\}.$$

It is known that (see \cite[Equation (1.1)]{Pil})
\begin{equation}\label{e7}\|T_n\|_{(\mathcal
M_0)}^*:=\sup\{\|T_n(A)\|^*: A\in \mathcal M_0, \|A\|^*\leq
1\}=\|T_n\|_{(\mathcal M_0)}.\end{equation}

The following theorem reflects the connection between the
boundedness of the norms of the main triangle projections and the
question concerning a possibility of embedding of a matrix space
into a Banach space with an unconditional basis.

\begin{thm}\label{th8}\cite[Theorem 2.3]{Pil} Let $\|\cdot\|_{\mathcal M_0}$
be a such symmetric matrix norm. If the sequence
$\{\|T_n\|_{(\mathcal M_0)}\}_n$ is unbounded, then the space
$(\mathcal M_0, \|\cdot\|_{\mathcal M_0})$ is not isomorphic to
any subspace of a Banach space with an unconditional basis.
\end{thm}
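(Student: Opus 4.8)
The plan is to prove the contrapositive. Suppose $X$, the completion of $(\mathcal M_0,\|\cdot\|_{\mathcal M_0})$, is isomorphic to a subspace of a Banach space $Y$ carrying an unconditional basis --- after renorming, a $1$-unconditional basis $(y_i)_{i\ge 1}$ --- via an isomorphism $j$ of $X$ onto its range with $\|j\|\,\|j^{-1}\|\le\lambda$. I would then show that $\sup_n\|T_n\|_{(\mathcal M_0)}\le C(\lambda)<\infty$, contradicting the hypothesis. The first step is a reduction to finite dimensions: since $T_n(A)$ depends only on the entries of $A$ indexed by $[1,n]\times[1,n]$ and $\|P_{nn}(A)\|_{\mathcal M_0}\le\|A\|_{\mathcal M_0}$ by property (iii), one obtains $\|T_n\|_{(\mathcal M_0)}=\|T_n|_{X_n}\|$, where $X_n:=\mathrm{span}\{u_{st}:1\le s,t\le n\}$; a routine small-perturbation and blocking argument then lets us assume that $j(X_n)$ lies inside the span of finitely many of the $y_i$.

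Two facts are then to be combined. On the side of $Y$: writing $R_\varepsilon$ for the diagonal sign operators $R_\varepsilon\bigl(\sum_i a_i y_i\bigr)=\sum_i\varepsilon_i a_i y_i$ (each of norm $1$), Rademacher averaging shows that for every bounded operator $U\colon Y\to Y$ and every subset $S$ of basis indices, the partial-diagonal operator $D_S(U)$, which keeps only the matrix coefficients of $U$ at positions $(j,j)$ with $j\in S$, satisfies $\|D_S(U)\|\le\|U\|$. On the side of $X$: conditions (iv) and (v) say precisely that all row and column permutations, together with all row and column sign changes, act isometrically on every $X_n$, so that $X_n$ is a highly symmetric finite-dimensional space and $(u_{st})_{s,t\le n}$ behaves as a symmetric matrix basis. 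The aim is to marry the two: transporting the symmetry of $X_n$ through $j$ and invoking the self-duality $\|T_n\|_{(\mathcal M_0)}^{*}=\|T_n\|_{(\mathcal M_0)}$ of \eqref{e7}, one manufactures out of $j$ and suitable partial-diagonal operators on $Y$ a representation of $T_n|_{X_n}$ whose norm is controlled by $\lambda$ alone; contractivity of the partial diagonals on $Y$ then caps $\|T_n\|_{(\mathcal M_0)}$ independently of $n$.

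The essential difficulty --- and the technical core of \cite[Theorem 2.3]{Pil} --- is exactly this interface. The triangular region $\{t\le s\}$ is \emph{not} invariant under the symmetry group of $\|\cdot\|_{\mathcal M_0}$, so $T_n$ is not a group average of isometries; and $j(X)$ need not be boundedly complemented in $Y$, so one cannot simply conjugate a diagonal operator on $Y$ back to $X$ through $j$. The way around both is to pass to a bilinear (tensorial) reformulation, testing $T_n$ simultaneously against vectors from $X$ and against vectors measured in the conjugate norm --- here \eqref{e7} is indispensable --- so that only the unconditional structure of $Y$, rather than a projection onto $j(X)$, is called upon. Carried through, this yields an estimate of the form $\|T_n\|_{(\mathcal M_0)}\le C\lambda^{2}$ valid for all $n$, which contradicts the assumed unboundedness of $\{\|T_n\|_{(\mathcal M_0)}\}_n$. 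Therefore $(\mathcal M_0,\|\cdot\|_{\mathcal M_0})$ is not isomorphic to a subspace of any Banach space with an unconditional basis.
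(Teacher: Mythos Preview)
The paper does not prove Theorem~\ref{th8}; it is quoted verbatim from \cite[Theorem~2.3]{Pil} and used as a black box. So there is no ``paper's own proof'' to compare your proposal against.

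As for the proposal itself: what you have written is a plausible outline of the Kwapie\'n--Pe{\l}czy\'nski strategy (contrapositive, finite-dimensional reduction via property~(iii), exploitation of $1$-unconditionality on the ambient space, the duality \eqref{e7}, and the symmetry conditions~(iv)--(v)), but it is not a proof. You explicitly flag the ``technical core'' and then do not execute it: the sentence ``Carried through, this yields an estimate of the form $\|T_n\|_{(\mathcal M_0)}\le C\lambda^{2}$'' is precisely the step that needs an argument, and you have supplied none. In particular, the passage about ``manufacturing out of $j$ and suitable partial-diagonal operators on $Y$ a representation of $T_n|_{X_n}$'' is a description of what one hopes will happen, not a construction. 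The genuine content of \cite[Theorem~2.3]{Pil} lies in an averaging argument over the permutation-and-sign group acting on $X_n$ that converts an arbitrary operator on $Y$ into one commuting with that group action, together with a combinatorial lemma linking the triangle projection to such averages; neither appears in your write-up. If you want this to stand as a proof rather than a pointer to \cite{Pil}, that machinery has to be spelled out.
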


In the present paper, we consider only two types of matrix spaces,
projective and injective tensor products. Recall the definition of
the spaces (see e. g. \cite{Ryan}).

Let $(E,\|\cdot\|_E)$, $(F,\|\cdot\|_F)$ be Banach spaces over the
field $\mathbb K$ (real or complex numbers). By $E\otimes F$ we denote the algebraic tensor product of $E$
and $F$.

For every $u\in E\otimes F$  we define the \textit{projective
tensor norm}
$$\pi(u):=\inf\{\sum\limits_{i=1}^n \|x_i\|_E\|y_i\|_F:
u=\sum\limits_{i=1}^n x_i\otimes y_i\}$$ (respectively,
\textit{injective tensor norm}
$$\varepsilon(u):=\sup\{|\sum\limits_{i=1}^n
\varphi(x_i)\psi(y_i)|: \; \varphi\in E^*, \|\varphi\|_{E^*}\leq
1, \psi\in F^*, \|\psi\|_{F^*}\leq 1\}).$$

The completion of $E\otimes F$ with respect to the norm $\pi$
(respectively, $\varepsilon$) we shall denote by
$E\widehat{\otimes} F$
 (respectively, $E\widehat{\widehat{\otimes}} F$) and call by \textit
{projective} (respectively, \textit{injective}) \textit{tensor
product of Banach spaces $E$ and $F$}.

For convenience, we denote the norm  $\pi$ (respectively,
$\varepsilon$) on the space $E\otimes F$ by $\pi_{E,F}$
(respectively, $\varepsilon_{E,F}$).


Let $c_{00}$ be the linear space of all finitely supported
sequences. The tensor product $c_{00}\otimes c_{00}$ can be
identified with the space of matrices $\mathcal M_0$ on $\mathbb
K$. The tensor product basis $\{e_j\otimes e_k\}_{j,k=1}^\infty$
corresponds to the standard basis in $\mathcal M_0$ (see
\cite[\S1.5]{Ryan} and \cite[\S3]{Pil}).

If $E$ is separable and $p$-convex, $F$ is $q$-concave symmetric
sequence spaces, then the spaces $E^*$ and $F^*$ are symmetric
spaces too (see Remark \ref{r6}), furthermore their dual spaces coincide
with $E^\times$ and $F^\times$, respectively (see \cite[Part I,
Chapter X, \S 4, Theorem 1]{KA}). Therefore $\varepsilon_{E,F}$
and $\pi_{E,F}$ are symmetric matrix norm on the space
$c_{00}\otimes c_{00}$. For this reason, below we shall only
consider separable symmetric sequence spaces.

The following proposition explains the connection between tensor
product norms and the operator norm in $B(E,F)$ (see \cite[\S2.2 and 3.1]{Ryan}).

\begin{prop}\label{p15}
(i) The norm $\varepsilon_{E,F}$ coincides with the operator norm
on the space $B(E^\times,F)$;

(ii) the conjugate norm  for the norm $\pi_{E,F}$ coincides with
the operator norm on the space $B(E,F^\times)$.
\end{prop}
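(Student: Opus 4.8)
The plan is to verify both identities by unwinding the definitions of the tensor norms and matching them against the integral/functional representation of operators between sequence spaces. For item (i), recall that an element $u=\sum_{i=1}^n x_i\otimes y_i\in E\otimes F$ can be viewed as a finite-rank operator $\widetilde u\colon E^\times\to F$ via $\widetilde u(\varphi)=\sum_i \varphi(x_i)\,y_i$, where we use that $E^\times$ is (isometrically a subspace of) the dual of $E$, so each $x_i\in E$ acts on $E^\times$. I would then compute
$$
\|\widetilde u\|_{B(E^\times,F)}=\sup_{\|\varphi\|_{E^\times}\le 1}\Big\|\sum_i \varphi(x_i)\,y_i\Big\|_F
=\sup_{\|\varphi\|_{E^\times}\le 1}\ \sup_{\|\psi\|_{F^\times}\le 1}\Big|\sum_i \varphi(x_i)\,\psi(y_i)\Big|,
$$
using the Fatou property (so that $F=F^{\times\times}$ and the norm in $F$ is recovered by duality against $F^\times$). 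The right-hand side is exactly $\varepsilon_{E,F}(u)$ as defined in the paper, once one identifies the norming functionals on $E$ and $F$ with the unit balls of $E^\times$ and $F^\times$. The only subtlety is that the injective norm in the definition uses functionals in $E^*$ and $F^*$, while the operator norm on $B(E^\times,F)$ naturally involves $E^\times$; here one invokes the standing assumption (made in the paragraph preceding the proposition) that $E$, $F$ are separable symmetric sequence spaces whose duals coincide with their K\"othe duals, so $E^*=E^\times$ and $F^*=F^\times$ isometrically. With this identification the two suprema agree and (i) follows.

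For item (ii), I would argue by duality from (i), or directly. The direct route: the conjugate norm $\pi_{E,F}^*$ is by definition
$$
\pi_{E,F}^*(v)=\sup\Big\{\Big|\sum_{i,j}v_{ij}u_{ij}\Big| : \pi_{E,F}(u)\le 1\Big\},
$$
and the standard duality $(E\widehat\otimes F)^*=B(E,F^*)$ (which is \cite[\S2.2, 3.1]{Ryan}) identifies the linear functional $u\mapsto \sum u_{ij}v_{ij}$ with the operator $\widetilde v\colon E\to F^*=F^\times$ whose matrix is $v$, and the dual norm equals $\|\widetilde v\|_{B(E,F^\times)}$. Concretely, for $u=\sum_i x_i\otimes y_i$ one has $\sum_{k,l}v_{kl}u_{kl}=\sum_i \langle \widetilde v(x_i),y_i\rangle$, and taking the supremum over $\pi$-unit-ball representations yields $\sup_i\|x_i\|_E\|\widetilde v(\cdot)\|$-type estimates that collapse exactly to the operator norm of $\widetilde v$ on $B(E,F^\times)$. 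Again the separability/K\"othe-duality hypothesis is what lets us write $F^*=F^\times$ and stay inside the category of symmetric sequence spaces.

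The main obstacle, such as it is, is purely bookkeeping: one must be careful that the functionals appearing in the definition of $\varepsilon_{E,F}$ and $\pi_{E,F}^*$ really can be taken in $E^\times$, $F^\times$ rather than in the larger abstract duals, and that no norm constants are lost. This is handled entirely by the separability assumptions and the identifications $E^*=E^\times$, $F^*=F^\times$ recorded just before the proposition, together with the Fatou-norm characterization from Section 3. Since \cite{Ryan} supplies the abstract tensor-product duality, the proposition is really a translation statement, and I would present it as such, citing \cite[\S2.2 and 3.1]{Ryan} for the operator-valued descriptions of $\varepsilon$ and $\pi^*$ and Section 3 of the present paper for the passage to K\"othe duals.
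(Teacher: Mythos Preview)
Your proposal is correct and follows the same route as the paper: the paper does not supply its own proof but simply refers to \cite[\S2.2 and 3.1]{Ryan} for the standard identifications $(E\widehat{\widehat\otimes}F)\cong\text{finite-rank operators in }B(E^*,F)$ and $(E\widehat\otimes F)^*\cong B(E,F^*)$, together with the separability assumptions recorded just before the proposition to replace $E^*$, $F^*$ by $E^\times$, $F^\times$. Your sketch is precisely an unpacking of that citation, so nothing further is needed (the appeal to the Fatou property in (i) is harmless but superfluous, since $\|y\|_F=\sup_{\|\psi\|_{F^*}\le1}|\psi(y)|$ already holds by Hahn--Banach and $F^*=F^\times$ by separability).
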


\begin{rem} In particular, Proposition \ref{p15} (i) shows that for $A=(a_{ij})\in B(E^\times,F)$, we have
$$\|A\|_{\varepsilon_{E,F}}=\sup\{\left|\sum\limits_{i,j}a_{ij}x_iy_j\right|:
\|x\|_{E^\times}\leq 1, \|y\|_{F^\times}\leq 1\}.$$
Another important observation is

\begin{equation}\label{e8}\|T_n\|_{(\pi_{E,F})}=\|T_n\|_{(\varepsilon_{F^\times,E^\times})}^*\qquad\text{for every}\quad n\geq 1.\end{equation}

\end{rem}

We can reformulate Proposition \ref{p111} as follows:

\begin{prop}\label{p4}
Let $E$ and $F$ be symmetric sequence spaces.

(i) if $E$ is $p$-concave and $F$ is $q$-concave for $p\neq
\infty$, $q\neq\infty$ and $q\leq p^*$, then
$$
\|T_n\|_{(\varepsilon_{E,F})}\geq C(p,q) \ln n.
$$

(ii)  if $E$ is $p$-convex and $F$ is $q$-convex for $p\neq 1$,
$q\neq 1$ and $p^*\leq q$ then
$$
\|T_n\|_{(\pi_{E,F})}\geq C(p,q) \ln n.
$$
\end{prop}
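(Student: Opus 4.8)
The plan is to reduce Proposition \ref{p4} to Proposition \ref{p111} by a duality/convexification argument, handling the two parts separately but in parallel. For part (i), I would first invoke Proposition \ref{p15} (i), which identifies $\varepsilon_{E,F}$ with the operator norm on $B(E^\times,F)$; hence $\|T_n\|_{(\varepsilon_{E,F})}=\|T_n\|_{(E^\times,F)}$. Now I want to apply Proposition \ref{p111} with the pair $(E^\times,F)$ in place of $(E,F)$, so I need $E^\times$ to be $p^*$-convex and $F$ to be $q$-concave, together with $p^*\neq 1$, $q\neq\infty$ and $q\leq p^*$. The hypothesis gives $F$ is $q$-concave directly; $q\leq p^*$ and $q\neq\infty$ are assumed; $p\neq\infty$ forces $p^*\neq 1$. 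For the convexity of $E^\times$: since $E$ is $p$-concave with $p\neq\infty$, by Remark \ref{r6} $E$ is separable, so Proposition \ref{p2} applies and gives that $E^\times$ (which equals $E^*$ for separable $E$) is $p^*$-convex. Then Proposition \ref{p111} applied to $(E^\times,F)$ yields $\|T_n\|_{(E^\times,F)}\geq C(p^*,q)\ln n$, which is the claim (after renaming the constant).

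For part (ii), the analogous route is to use Equation \eqref{e8}, which states $\|T_n\|_{(\pi_{E,F})}=\|T_n\|_{(\varepsilon_{F^\times,E^\times})}^*$, together with the self-duality relation \eqref{e7} for the main triangle projection norm, namely $\|T_n\|_{(\mathcal M_0)}^*=\|T_n\|_{(\mathcal M_0)}$ for any symmetric matrix norm; this gives $\|T_n\|_{(\pi_{E,F})}=\|T_n\|_{(\varepsilon_{F^\times,E^\times})}$. Alternatively, and perhaps more cleanly, I would use Proposition \ref{p15} (ii): the conjugate norm of $\pi_{E,F}$ is the operator norm on $B(E,F^\times)$, so $\|T_n\|_{(\pi_{E,F})}=\|T_n\|_{(\pi_{E,F})}^*=\|T_n\|_{(E,F^\times)}$, the first equality again by \eqref{e7}. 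Now apply Proposition \ref{p111} to the pair $(E,F^\times)$: I need $E$ to be $p$-convex (assumed), $F^\times$ to be $q^*$-concave, and the index conditions $p\neq 1$, $q^*\neq\infty$, $q^*\leq p$. Here $p\neq 1$ is assumed; $q\neq 1$ forces $q^*\neq\infty$; and the hypothesis $p^*\leq q$ is equivalent to $q^*\leq p$ by taking conjugates. For the concavity of $F^\times$: $F$ is $q$-convex with $q\neq 1$, so $F$ is separable (its norm is order-continuous since a $q$-convex space with $q>1$ cannot contain a disjointly supported sequence of norm-one vectors summing to an element of the space — or simply because $q$-convex for $q>1$ implies no copy of $c_0$ in the relevant sense, giving separability of a symmetric sequence space contained in $c_0$), whence Proposition \ref{p2} gives $F^\times=F^*$ is $q^*$-concave. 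Then Proposition \ref{p111} delivers $\|T_n\|_{(E,F^\times)}\geq C(p,q^*)\ln n$, as required.

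The main obstacle I anticipate is the bookkeeping around separability and the identification of K\"othe duals with Banach duals, i.e. justifying that Proposition \ref{p2} is applicable. In part (i) this is handed to us by Remark \ref{r6}. In part (ii) I need separability of $F$ from $q$-convexity with $q>1$; if this is not immediate from the results quoted, I would instead argue directly: the inequality $\|T_n\|_{(\pi_{E,F})}\geq C\ln n$ only requires a lower bound, and since the finite-rank operators $T_n$ live inside any reasonable dual pairing, I can work with $F^\times$ throughout (which is always a symmetric sequence space when $F$ is) and appeal to the fact that $F$ being $q$-convex makes $F^\times$ $q^*$-concave as a K\"othe-duality statement (the relevant half of Proposition \ref{p2} holds for K\"othe duals without a separability assumption, since $q$-convexity of $F$ is tested on finitely many vectors). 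Apart from this point, everything is a matter of correctly transporting the exponents through conjugation: $p\mapsto p^*$, $q\mapsto q^*$, and checking that the strict-inequality conditions $p\neq\infty\Leftrightarrow p^*\neq 1$ and $q\neq 1\Leftrightarrow q^*\neq\infty$ line up with the hypotheses of Proposition \ref{p111}.
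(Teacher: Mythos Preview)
Your proposal is correct and follows essentially the same route as the paper: for (i) you identify $\varepsilon_{E,F}$ with $\|\cdot\|_{E^\times,F}$ via Proposition~\ref{p15}(i), use Proposition~\ref{p2} to get $E^\times$ $p^*$-convex, and apply Proposition~\ref{p111}; for (ii) the paper uses exactly your first route via \eqref{e8} and \eqref{e7} to reduce to $\|T_n\|_{(\varepsilon_{F^\times,E^\times})}$ and then invokes part (i). Your worry about separability is moot here, since the paper imposes separability on $E$ and $F$ as a blanket assumption at the start of the section (so Proposition~\ref{p2} applies directly).
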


\begin{proof}
(i)  Since the norm $\varepsilon_{E,F}(\cdot)$ coincides with the
norm $\|\cdot\|_{E^\times,F}$ (see Proposition \ref{p15} (i)) and
$E^\times$ is $p^*$-convex (see Proposition \ref{p2}), by
Preposition \ref{p111}  we have that
$$\|T_n\|_{(\varepsilon_{E,F})}=\|T_n\|_{(E^\times,F)}\geq
C(p,q) \ln n.$$

(ii) Applying \eqref{e8} and \eqref{e7}, we have
\begin{equation}\label{e9}
\|T_n\|_{(\pi_{E,F})}=\|T_n\|_{(\varepsilon_{F^\times,E^\times})}^*=\|T_n\|_{(\varepsilon_{F^\times,E^\times})}.
\end{equation}
Since $E$ (respectively, $F$) is $p$-convex  (respectively, $q$-convex), we
have $E^\times$ (respectively, $F^\times$) is $p^*$-concave (respectively, $q^*$-concave)
(see Proposition \ref{p2}). By the item (i) above, we have
\begin{equation}\label{e10}
\|T_n\|_{(\varepsilon_{F^\times,E^\times})}\geq C(p,q) \ln n,\quad \forall n\ge 1
\end{equation}
whenever $p^*\neq \infty$, $q^*\neq\infty$ and $q^*\leq p$. Applying \eqref{e9}  and  \eqref{e10}, we obtain that
$$
\|T_n\|_{(\pi_{E,F})}=\|T_n\|_{(\varepsilon_{F^\times,E^\times})}\geq
C(p,q) \ln n ,\quad \forall n\ge 1
$$
for $p\neq 1$, $q\neq 1$ and $p^*\leq q$.
\end{proof}

The following theorem is the main result of the section.

\begin{thm}\label{th9}
Let $E$ and $F$ be symmetric sequence spaces.

(i) if $E$ is $p$-concave and $F$ is $q$-concave for $p\neq
\infty$, $q\neq\infty$ and $q\leq p^*$, then the tensor product
$E\widehat{\widehat{\otimes}} F$ is not isomorphic to any subspace
of a Banach space with an unconditional basis.

(ii) if $E$ is  $p$-convex and $F$ is $q$-convex for $p\neq 1$,
$q\neq 1$ и $p^*\leq q$, then the tensor product
$E\widehat{{\otimes}} F$ is not isomorphic to any subspace of a
Banach space with an unconditional basis.
\end{thm}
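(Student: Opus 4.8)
The plan is to deduce Theorem \ref{th9} directly from Proposition \ref{p4} together with the general principle recorded in Theorem \ref{th8}. The point is that the two hypotheses of Proposition \ref{p4} are precisely tailored to produce the logarithmic lower bound $\|T_n\|_{(\varepsilon_{E,F})}\geq C(p,q)\ln n$ (respectively $\|T_n\|_{(\pi_{E,F})}\geq C(p,q)\ln n$), and hence unboundedness of the sequence of main triangle projection norms, which is exactly the input Theorem \ref{th8} requires.

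First I would verify that under the hypotheses of part (i) the injective tensor norm $\varepsilon_{E,F}$ is a \emph{symmetric} matrix norm on $c_{00}\otimes c_{00}\cong\mathcal M_0$: conditions (i)--(iii) are immediate from the definition of $\varepsilon$, the normalization (ii) holds because $\|e_j\|_E=\|e_k\|_F=1$, unconditionality (iv) follows since replacing $a_{ij}$ by $\pm a_{ij}$ can be absorbed into the functionals $\varphi,\psi$ appearing in the supremum, and symmetry (v) follows from the symmetry of the norms on $E^*$ and $F^*$ (here one uses Remark \ref{r6}, which guarantees separability of a $q$-concave space so that $E^*$, $F^*$ are themselves symmetric sequence spaces — this is the reason the discussion preceding the theorem restricts to separable spaces). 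Then, since $E$ is $p$-concave and $F$ is $q$-concave with $p\neq\infty$, $q\neq\infty$, $q\leq p^*$, Proposition \ref{p4}(i) gives $\|T_n\|_{(\varepsilon_{E,F})}\geq C(p,q)\ln n\to\infty$. Applying Theorem \ref{th8} to the symmetric matrix norm $\|\cdot\|_{\mathcal M_0}=\varepsilon_{E,F}$, we conclude that $(\mathcal M_0,\varepsilon_{E,F})$, whose completion is $E\widehat{\widehat{\otimes}}F$, is not isomorphic to any subspace of a Banach space with an unconditional basis; since a subspace of a space isomorphic to such a subspace is again such a subspace, passing to the completion causes no difficulty.

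For part (ii) the argument is the mirror image: under the stated hypotheses ($E$ is $p$-convex, $F$ is $q$-convex, $p\neq 1$, $q\neq 1$, $p^*\leq q$) the projective tensor norm $\pi_{E,F}$ is a symmetric matrix norm on $\mathcal M_0$ — one checks the axioms as above, or alternatively invokes \eqref{e8}, $\|T_n\|_{(\pi_{E,F})}=\|T_n\|_{(\varepsilon_{F^\times,E^\times})}^*$, together with the duality relation $\|\cdot\|^{**}=\|\cdot\|$ and the fact that the conjugate of a symmetric matrix norm is symmetric. Proposition \ref{p4}(ii) then yields $\|T_n\|_{(\pi_{E,F})}\geq C(p,q)\ln n\to\infty$, and a second application of Theorem \ref{th8}, now with $\|\cdot\|_{\mathcal M_0}=\pi_{E,F}$, shows that $E\widehat{\otimes}F$ is not isomorphic to any subspace of a Banach space with an unconditional basis.

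The mathematical core of the argument has already been done in Proposition \ref{p4}; the only genuinely new thing to take care of here is the bookkeeping that $\varepsilon_{E,F}$ and $\pi_{E,F}$ satisfy \emph{all} five axioms (i)--(v) of a symmetric matrix norm — in particular the symmetry axiom (v), which is where the separability of $q$-concave (respectively duals of $q$-convex) spaces enters and is the reason Remark \ref{r6} was recorded. I expect this verification, rather than anything in the limiting or duality steps, to be the only place requiring care, and even it is essentially routine given the remarks collected in Section 6 before the statement.
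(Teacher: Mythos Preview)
Your proposal is correct and follows essentially the same route as the paper: invoke Proposition \ref{p4} to obtain $\|T_n\|\to\infty$ for the relevant tensor norm, then apply Theorem \ref{th8} to conclude. The paper treats the verification that $\varepsilon_{E,F}$ and $\pi_{E,F}$ are symmetric matrix norms as already settled in the discussion preceding the statement, whereas you spell out the check of axioms (i)--(v) more explicitly; otherwise the arguments coincide.
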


\begin{proof}
(i) By Proposition \ref{p4} (i), with $p\neq \infty$,
$q\neq\infty$ and $q\leq p^*$,  we have that the sequence
$\{\|T_n\|_{(\varepsilon_{E,F})}\}_n$ is unbounded. By Theorem
\ref{th8} we obtain that the space $c_{00}\otimes c_{00}$ with the
norm $\varepsilon_{E,F}$ is not isomorphic to any subspace of a
Banach space with an unconditional basis. Since $(c_{00}\otimes
c_{00}, \varepsilon_{E,F})$ is a linear subspace in
$E\widehat{\widehat{\otimes}} F$, we conclude that the space
$E\widehat{\widehat{\otimes}} F$ is not isomorphic to any subspace
of a Banach space with an unconditional basis.

(ii) Similarly to the item (i), using Proposition \ref{p4} (ii)
instead of Proposition \ref{p4} (i).
\end{proof}

Now we consider a class of symmetric sequence spaces,
called Orlicz-Lorentz sequence spaces, generalizing the class of $l_p$-spaces. For detailed studies of
this class of spaces we refer to \cite{Hud},\cite{KA1} and
\cite{KA3}.

We recall that $G:[0,\infty)\to[0,\infty)$ is an \textit{Orlicz
function}, that is, a convex function which assumes value zero
only at zero) and $w=(w_k)$ is a \textit{weight sequence}, a
non-increasing sequence of positive reals such that
$\sum\limits_{k=1}^\infty w_k=\infty$.

The \textit{Orlicz-Lorentz sequence space} $\lambda_{w,G}$ is
defined by
$$
\lambda_{w,G}:=\{x=(x_k):\sum\limits_{k=1}^\infty G(\lambda
x_k^*)w_k<\infty\;\;\text{for some}\:\: \lambda>0\}.
$$
It is easy to check that $\lambda_{G,w}$ is a symmetric sequence
space, equipped with the norm
$$\|x\|_{w,G}:=\inf\{\lambda>0:\sum\limits_{k=1}^\infty G(\lambda
x_k^*)w_k\leq 1\}.$$ Two Orlicz functions $G_1$ and $G_2$ are said
to be \textit{{equivalent}} if there exist such a constant
$c<\infty$ that
$$
G_1(c^{-1}t)\leq G_2(t)\leq G_1(ct),\quad \text{for every}\quad
t\in [0,\infty).
$$

The following theorem indicates sufficient conditions under which
the space $\lambda_{w,G}$ is $p$-convex or $q$-concave (see also
\cite{Kam}).

\begin{thm}\label{th2}\cite[Theorem 5.1]{SMS}\quad
Let $G$ be an Orlicz function, $w=(w_k)$ be a weight sequence and
$1<p,q<\infty$. Then the following claims hold:

(i) If $G\circ t^{1/p}$ is equivalent to a convex function and
$\sum\limits_{k=1}^n w_k$ is concave, then the space
$\lambda_{w,G}$ is $p$-convex;

(ii) If $G\circ t^{1/q}$ is equivalent to a concave function and
$\sum\limits_{k=1}^n w_k$ is convex, then the space
$\lambda_{w,G}$ is $q$-concave.

\end{thm}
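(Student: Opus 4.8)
The plan is to derive Theorem \ref{th2} from known structural results about Orlicz--Lorentz sequence spaces, via the general machinery of convexity and concavity developed in \cite{Lin2} and the reference \cite{SMS}. Since the statement is quoted as \cite[Theorem 5.1]{SMS}, strictly speaking one only needs to cite it; but to make the excerpt self-contained I would sketch the argument as follows. First I would record the standard fact that for a Banach lattice, $p$-convexity of $E$ is equivalent to $E$ being a Banach space after passing to the $p$-concavification, i.e. $E^{(1/p)}$ carrying an equivalent lattice norm; dually, $q$-concavity of $E$ is equivalent to $E^{(1/q)}$ being $q$-convexifiable with the appropriate norm estimate. So the problem reduces to analyzing the $p$-concavification of $\lambda_{w,G}$.

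Next I would identify the concavification concretely. A direct computation with the Luxemburg-type norm shows that $\lambda_{w,G}^{(1/p)}$ is again an Orlicz--Lorentz space, namely $\lambda_{w, G\circ t^{1/p}}$ (up to the usual normalization), because raising the entries to the power $1/p$ inside the modular $\sum G(\lambda x_k^*) w_k$ converts $G$ into $G\circ t^{1/p}$ while leaving the weight $w$ untouched. Hence $\lambda_{w,G}$ is $p$-convex exactly when $\lambda_{w, G\circ t^{1/p}}$ is a Banach function space with an equivalent lattice norm. The hypothesis that $G\circ t^{1/p}$ be equivalent to a convex function guarantees that the modular generated by $G\circ t^{1/p}$ is (up to equivalence) generated by an Orlicz function, so the gauge $\|\cdot\|_{w,\,G\circ t^{1/p}}$ is equivalent to an actual norm; and the hypothesis that the partial sums $\sum_{k=1}^n w_k$ be concave is precisely the classical condition (going back to Lorentz) ensuring that the Orlicz--Lorentz gauge with a genuine Orlicz function is subadditive, hence a norm. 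Combining these two facts yields that $\lambda_{w, G\circ t^{1/p}}$ is a symmetric Banach sequence space, which is the content of $p$-convexity of $\lambda_{w,G}$. This proves (i).

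For (ii) I would argue dually. By Proposition \ref{p2} it suffices to show that $(\lambda_{w,G})^\times$ is $q^*$-convex, or one can argue directly: $q$-concavity of $\lambda_{w,G}$ amounts to the $q$-concavification $\lambda_{w,G}^{(1/q)} = \lambda_{w, G\circ t^{1/q}}$ admitting an equivalent lattice norm \emph{after} a further $q$-convexification, which by the same reduction as above is governed by whether $G\circ t^{1/q}$ is equivalent to a \emph{concave} function and whether the weight behaves appropriately. Here the role of the partial-sum condition flips: convexity of $n\mapsto\sum_{k=1}^n w_k$ is the condition under which the relevant upper $q$-estimate holds for the Orlicz--Lorentz modular. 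One then invokes the Lindenstrauss--Tzafriri criterion \cite[1.d]{Lin2} that an upper (respectively lower) estimate on a suitable concavification/convexification is equivalent to the original space being $q$-concave.

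The main obstacle is the passage between the \emph{modular} estimates — which are what one gets for free from convexity/concavity of $G\circ t^{1/p}$ and from the regularity of the weight partial sums — and the \emph{norm} estimates required by the definitions of $p$-convexity and $q$-concavity used in this paper (which follow \cite{Lin2}). This requires care with the two distinct ``scaling'' parameters (the Orlicz power and the weight summation) and with the equivalence-up-to-constants built into the notions of equivalent Orlicz functions and equivalent norms; tracking these constants, and verifying that the equivalence of $G\circ t^{1/p}$ to a convex function can be upgraded to an equivalence of the induced gauges, is the technical heart of the proof. Since all of this is carried out in detail in \cite{SMS} (and related computations appear in \cite{Kam}), I would for the present purposes simply cite \cite[Theorem 5.1]{SMS} and refer the reader there, as the statement above indicates.
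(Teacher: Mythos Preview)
The paper does not give a proof of this theorem at all: it is stated with the attribution \cite[Theorem~5.1]{SMS} and used as a black box. Your proposal ends in exactly the same place (``I would for the present purposes simply cite \cite[Theorem~5.1]{SMS}''), so as a proof of the statement in this paper it is correct and matches the paper's treatment.

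The sketch you add goes beyond anything the paper does. For part~(i) the outline is sound: $p$-convexity of a Banach lattice $E$ is equivalent (up to renorming) to normability of the concavification $E^{1/p}$, and the identification $(\lambda_{w,G})^{1/p}=\lambda_{w,\,G\circ t^{1/p}}$ makes the hypotheses of (i) exactly the normability conditions. For part~(ii), however, the sketch is muddled: the assertion that $q$-concavity ``amounts to the $q$-concavification $\lambda_{w,G}^{(1/q)}$ admitting an equivalent lattice norm after a further $q$-convexification'' is circular, since $(E^{1/q})^{q}=E$, and normability of $E^{1/q}$ is the criterion for $q$-\emph{convexity}, not $q$-concavity. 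If you wanted an honest sketch of (ii) you would either pass to the K\"othe dual via Proposition~\ref{p2} and reduce to (i), or reproduce the direct modular argument from \cite{SMS}. Since you ultimately defer to \cite{SMS}, this does not affect the validity of your proposal.
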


According to Theorem \ref{th2}, we can reformulate Theorem
\ref{th9} for Orlicz-Lorentz sequence spaces as follows.

\begin{thm}\label{th3} Let $G_1$ and $G_2$ be Orlicz functions
and $w_1=(w_k^{(1)})$, $w_2=(w_k^{(2)})$ be weight sequences such
that the spaces $\lambda_{w_1,G_1}$ and $\lambda_{w_2,G_2}$ are
separable.
 If  $G_1\circ t^{1/p}$ and $G_2\circ t^{1/q}$ are equivalent
to concave (convex, respectively) functions for $p\neq \infty$,
$q\neq\infty$ and $q\leq p^*$ ($p\neq 1$, $q\neq1$ and $p^*\leq
q$, respectively),  and $\sum\limits_{k=1}^n w_k^{(1)}$,
$\sum\limits_{k=1}^n w_k^{(2)}$ are convex (concave, respectively)
functions, then the tensor product
$\lambda_{w_1,G_1}\widehat{\widehat{\otimes}} \lambda_{w_2,G_2}$
($\lambda_{w_1,G_1}\widehat{{\otimes}} \lambda_{w_2,G_2}$,
respectively) is not isomorphic to any subspace of a Banach space
with an unconditional basis.
\end{thm}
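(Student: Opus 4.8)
The plan is to reduce Theorem \ref{th3} directly to Theorem \ref{th9} by verifying that the hypotheses on the Orlicz functions $G_i$ and the weights $w_i$ force the corresponding convexity/concavity conditions on the spaces $\lambda_{w_i,G_i}$, which is exactly what Theorem \ref{th9} needs. Concretely, for part (i) I would argue as follows: the assumption that $G_1\circ t^{1/p}$ is equivalent to a concave function means precisely that $G_1$ is equivalent to a function $H_1$ with $H_1\circ t^p$ convex, i.e. (after the standard renaming) the hypothesis in Theorem \ref{th2}(i) holds with $p$ replaced by $p$ for $G_1$; combined with convexity of $\sum_{k=1}^n w_k^{(1)}$ one would want $p$-convexity, but note that in the Orlicz-Lorentz setting the roles of ``convex weight partial sums'' pair with concavity of the space and ``concave weight partial sums'' pair with convexity of the space, so I must be careful to match the statement of Theorem \ref{th2} exactly. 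The cleanest route is: given $G_1\circ t^{1/p}$ equivalent to a concave function, rewrite this as $G_1$ being $p$-concavifiable in the sense of Theorem \ref{th2}(ii) (equivalently $G_1\circ t^{1/p}$ concave), and with $\sum w_k^{(1)}$ convex, Theorem \ref{th2}(ii) yields that $\lambda_{w_1,G_1}$ is $p$-concave. Likewise $\lambda_{w_2,G_2}$ is $q$-concave. Then, since the spaces are separable by hypothesis and $p\neq\infty$, $q\neq\infty$, $q\leq p^*$, Theorem \ref{th9}(i) applies verbatim and gives that $\lambda_{w_1,G_1}\widehat{\widehat{\otimes}}\lambda_{w_2,G_2}$ is not isomorphic to any subspace of a Banach space with an unconditional basis.

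For part (ii) the argument is dual. The hypothesis that $G_1\circ t^{1/p}$ and $G_2\circ t^{1/q}$ are equivalent to convex functions, together with $\sum_{k=1}^n w_k^{(1)}$ and $\sum_{k=1}^n w_k^{(2)}$ concave, places us in case (i) of Theorem \ref{th2}: thus $\lambda_{w_1,G_1}$ is $p$-convex and $\lambda_{w_2,G_2}$ is $q$-convex. Invoking Theorem \ref{th9}(ii) with $p\neq 1$, $q\neq 1$, $p^*\leq q$ and separability then yields that $\lambda_{w_1,G_1}\widehat{\otimes}\lambda_{w_2,G_2}$ is not isomorphic to any subspace of a Banach space with an unconditional basis. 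This completes both cases.

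The step I expect to be the main obstacle — or at least the one requiring the most care — is bookkeeping the equivalences: one must confirm that ``$G\circ t^{1/p}$ equivalent to a convex function'' is the statement that feeds into Theorem \ref{th2}(i) (not (ii)) and that ``$G\circ t^{1/q}$ equivalent to a concave function'' feeds into Theorem \ref{th2}(ii), and simultaneously that the parity conditions on $\sum w_k$ (convex versus concave partial sums) are paired with the correct one of the two items. A secondary subtlety is that $p$-convexity (or $q$-concavity) of a symmetric sequence space is preserved under passing to an equivalent Orlicz function, so replacing $G_i$ by the genuinely convex/concave representative in its equivalence class does not affect the conclusion; this is essentially immediate from the definition of $M^{(p)}$ and $M_{(q)}$ and the fact that equivalent Orlicz functions generate Orlicz-Lorentz spaces with equivalent norms, but it should be noted. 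Beyond that, the proof is a direct citation chain: Theorem \ref{th2} $\Rightarrow$ convexity/concavity of the $\lambda_{w_i,G_i}$ $\Rightarrow$ Theorem \ref{th9} $\Rightarrow$ the desired non-embeddability.
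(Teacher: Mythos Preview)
Your proposal is correct and follows exactly the route the paper intends: the paper presents Theorem~\ref{th3} as a direct reformulation of Theorem~\ref{th9} via Theorem~\ref{th2}, with no additional argument supplied. Your secondary worry about passing to an equivalent Orlicz function is unnecessary, since the hypotheses of Theorem~\ref{th2} are already phrased in terms of equivalence, so the citation chain Theorem~\ref{th2} $\Rightarrow$ ($p$-concavity/$q$-concavity or $p$-convexity/$q$-convexity of the $\lambda_{w_i,G_i}$) $\Rightarrow$ Theorem~\ref{th9} goes through verbatim.
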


\addcontentsline{toc}{section}{References}

\end{document}